\NeedsTeXFormat{LaTeX2e}
\documentclass[11pt,reqno]{amsart}
 \usepackage{textcomp,multicol,enumerate,amsmath,amssymb,amsthm,latexsym,dsfont}
 \usepackage{amsfonts,mathtools,wasysym}
 \usepackage{xcolor}
 
 \usepackage[T1]{fontenc}
 \usepackage{fourier}
 \usepackage{bbm}
 \usepackage{color}
 \usepackage{xcolor}
 \usepackage{tikz}
 \usepackage{fullpage} 
 \allowdisplaybreaks
 \usepackage{cleveref}
 \usepackage{bm}
 \usetikzlibrary{arrows}

 \usepackage[margin=2.5cm]{geometry}
 \newtheorem{theorem}{Theorem}[section]
 
 \newtheorem{lemma}[theorem]{Lemma}
  \newtheorem{claim}[theorem]{Claim}
 \newtheorem{proposition}[theorem]{Proposition}
 \newtheorem{conjecture}[theorem]{Conjecture}
\newtheorem{remark}[theorem]{Remark}
 \newtheorem{question}[theorem]{Question}
  \theoremstyle{definition}

\newcommand{\eps}{\varepsilon}

\newcommand{\prob}[1]{\mathbb{P}\left(#1 \right)} 

\newcommand{\expect}[1]{\mathbb{E}\left(#1 \right)} 
\newcommand{\condexpect}[2]{\mathbb{E}\left(#1 \;\middle|\; #2\right)} 

\newcommand{\order}[1]{\left|V\left(#1\right)\right|} 

\definecolor{darkgreen}{rgb}{0,.65,0}

\title{The emergence of a giant rainbow component}

\author{Oliver Cooley$^*$ , Tuan Anh Do$^{\dagger}$ , Joshua Erde$^{\dagger}$ , Michael Missethan$^{\dagger}$ }

\thanks{$^*$
Institute of Science and Technology Austria (ISTA),
Am Campus 1, 3400 Klosterneuburg, Austria, {\tt oliver.cooley@ist.ac.at}}

\thanks{$^{\dagger}$ 
 	Institute of Discrete Mathematics, 
 	Graz University of Technology, 
 	Steyrergasse 30,
 	8010 Graz,
 	Austria,  
 	{\tt \{do,erde,
 	missethan\}@math.tugraz.at}. Supported by Austrian Science Fund (FWF) : P36131, W1230}
 	
\begin{document}

\maketitle
\begin{abstract} 
The random coloured graph $G_c(n,p)$ is
obtained from the Erd\H{o}s-R\'{e}nyi binomial random graph $G(n,p)$ by assigning to each edge a colour from a set of $c$ colours independently and uniformly at random. It is not hard to see that, when $c = \Theta(n)$, the order of the largest rainbow tree in this model undergoes a phase transition at the critical point $p=\frac{1}{n}$. In this paper we determine the asymptotic order of the largest rainbow tree in the \emph{weakly sub-  and supercritical regimes}, when $p = \frac{1+\eps}{n}$ for some $\eps=\eps(n)$ which satisfies $\eps = o(1)$ and $|\eps|^3 n\to\infty$. In particular, we show that in both of these regimes with high probability the largest component of $G_c(n,p)$ contains an almost spanning rainbow tree. We also consider the order of the largest rainbow tree in the \emph{sparse regime}, when $p = \frac{d}{n}$ for some constant $d >1$. Here we show that the largest rainbow tree has linear order, and, moreover, for $d$ and $c$ sufficiently large, with high probability $G_c(n,p)$ even contains an almost spanning rainbow cycle.

\end{abstract}
\section{Introduction}
\subsection{Motivation}
In this paper we consider the following model of a random coloured graph: Given $c,n \in \mathbb{N}$ and $p \in (0,1)$, we let $G_c(n,p)$ be a randomly coloured graph generated by taking an Erd\H{o}s-R\'{e}nyi binomial random graph $G(n,p)$ and choosing the colour of each edge independently and uniformly from a set of $c$ colours. A natural question to consider in this model is the threshold for the appearance of certain \emph{rainbow substructures}, subgraphs all of whose edges receive a different colour. 

This model, or at least a graph process version of this model, was first studied by Frieze and McKay \cite{FM94}, who gave a hitting time result for the existence of a rainbow spanning tree when $c \geq n-1$, showing that it coincides with the hitting time for being connected and containing $n-1$ different colours, a clearly necessary condition. In particular, their result implies that there is a sharp threshold for the existence of a rainbow spanning tree in $G_{n-1}(n,p)$ at $p=\frac{2 \log n}{n}$ (see also \cite{B21}), since this is the threshold for $G_{n-1}(n,p)$ containing $n-1$ different colours.

There has also been much interest in the threshold for the existence of a rainbow Hamilton cycle in this model. It is a well-known result of Koml{\'o}s and Szemer{\'e}di that whp\footnote{Short for ``with high probability'', meaning with probability tending to one as $n \to \infty$. Here and throughout the paper, unless otherwise stated, our asymptotics will be taken as $n \to \infty$.} $G(n,p)$ contains a Hamilton cycle if $p = \frac{\log n + \log \log n + \omega(1)}{n}$. After some earlier work \cite{BF16,CF02,FL14}, the current best results regarding the existence of rainbow Hamilton cycles in $G_c(n,p)$ are due to Ferber and Krivelevich \cite{FK16}, who showed that in the optimum range of $p$, where $p = \frac{\log n + \log \log n + \omega(1)}{n}$, whp $G_c(n,p)$ contains a rainbow Hamilton cycle if $c= (1+\eps)n$ for some fixed $\eps > 0$, and Ferber \cite{F15}, who showed that with the optimum number of colours $c=n$, whp $G_n(n,p)$ contains a rainbow Hamilton cycle if $p=\frac{K\log n}{n}$ for an appropriately large $K$. This constant cannot be improved to $K=1$, since for $K<2$ we do not expect to see every colour in $G_n(n,p)$, but it would be interesting to know if the statement holds for any $K >2$, s with the result of Frieze and McKay \cite{FM94} for arbitrary rainbow trees mentioned above. The existence of more general rainbow spanning structures has also been considered in this model \cite{FK16,FNP16}.

There has also been some interest in the existence of rainbow Hamilton cycles in random colourings of other models of random graphs, for example in random regular graphs \cite{JW07}, random digraphs \cite{F15}, randomly perturbed graphs \cite{AH20}, random geometric graphs \cite{BBPP17} and random hypergraphs \cite{DEF18,FK16}.

We also note that some of the aforementioned results can be deduced as corollaries of an extension to the rainbow setting, proved by Bell, Frieze and Marbach~\cite{FM21}, of the recent breakthrough of Frankston, Kahn, Narayanan and Park \cite{FKNP19} on Talagrand's fractional version of the `Expectation-threshold' conjecture of Kahn and Kalai. This conjecture was recently proved by Park and Pham \cite{PP22}.

Whereas most of the previous results in
the $G_c(n,p)$ model
have focused on the behaviour around the connectivity/Hamiltonicity threshold, our focus will instead be on the behaviour of $G_c(n,p)$ with $c=\Theta(n)$ and with $p$ close to the critical point $\frac{1}{n}$.
It is well known that around this probability the structure of the underlying random graph $G(n,p)$ changes dramatically - when $p$ is significantly smaller than $\frac{1}{n}$, whp $G(n,p)$ will consist of many small components, whereas when $p$ is significantly larger than $\frac{1}{n}$, whp $G(n,p)$ will contain a unique \emph{giant} component. More precisely, we have the following.
\begin{theorem}[\cite{ER59}]\label{t:giantsparse}
Let $d>0$, let $p=\frac{d}{n}$ and let $\gamma = \gamma(d)$ be the survival probability of a Po$(d)$ branching process.
\begin{enumerate}[(a)]
    \item If $d < 1$, then whp all components in $G(n,p)$ have order $O(\log n)$;
   \item If $d>1$, then whp there is a unique \emph{giant} component in $G(n,p)$ of order $(\gamma + o(1))n$, and all other components have order $O(\log n)$. 
\end{enumerate}
\end{theorem}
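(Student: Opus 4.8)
This is the classical Erd\H{o}s--R\'enyi theorem, and the plan is to prove it via the \emph{exploration process} coupled with Galton--Watson branching processes, together with a sprinkling argument for the uniqueness of the giant. For a vertex $v$ I would run the breadth-first exploration of its component $C(v)$, revealing at each step the $G(n,p)$-edges from the current active vertex to the as-yet-untouched vertices. The number of vertices activated in a step is always stochastically dominated by $\Bin(n,p)$, and, as long as fewer than $\alpha n$ vertices have been touched, it stochastically dominates $\Bin(\lceil(1-\alpha)n\rceil,p)$. Hence $|C(v)|$ is dominated by the total progeny of a Galton--Watson tree with offspring law $\Bin(n,p)$, while a suitably truncated exploration dominates the truncated Galton--Watson tree with offspring law $\Bin(\lceil(1-\alpha)n\rceil,p)$; as $n\to\infty$ these laws converge to $\mathrm{Po}(d)$ and $\mathrm{Po}((1-\alpha)d)$, with survival probabilities $\gamma(d)$ and $\gamma((1-\alpha)d)$.

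\textbf{Part (a).} For $d<1$ the dominating process is subcritical. Since the exploration survives past step $t$ only if the first $t$ active vertices have at least $t$ children in total, $\prob{|C(v)|>t}\le\prob{\Bin(tn,p)\ge t}\le e^{-\Lambda(d)t}$ for a constant $\Lambda(d)>0$ by a Chernoff bound. Taking $t=K\log n$ with $K>1/\Lambda(d)$ and a union bound over the $n$ vertices gives the claim.

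\textbf{Part (b).} Fix $d>1$, and proceed in three steps. First I would rule out medium components by a first moment estimate: using Cayley's formula and the independence of edges inside and outside a fixed $k$-set,
\[
\expect{\,\#\{\text{components of order }k\}\,}\le\binom nk k^{k-2}p^{k-1}(1-p)^{k(n-k)}\le\frac nd\Bigl(d\,e^{\,1-d(1-k/n)}\Bigr)^{k}.
\]
Since $d\,e^{1-d}<1$ for $d>1$, one can fix $\delta>0$ small enough that $d\,e^{1-d(1-\delta)}<1$; summing the bound over $K\log n\le k\le\delta n$ (with $K$ large) shows that whp $G(n,p)$ has no component of order in $[K\log n,\delta n]$. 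Secondly, let $N$ count the vertices lying in components of order at least $\delta n$. The upper coupling gives $\prob{|C(v)|\ge\delta n}\le\gamma(d)+o(1)$, and the lower coupling at level $\alpha=\delta$, followed by letting $\delta\to0$, gives $\prob{|C(v)|\ge\delta n}\ge\gamma(d)-o(1)$; hence $\expect N=(\gamma(d)+o(1))n$, and a second moment computation --- showing that explorations from two uniformly random vertices are unlikely to interact before either reaches size $\delta n$ --- yields $\mathrm{Var}(N)=o(n^2)$, so $N=(\gamma(d)+o(1))n$ whp by Chebyshev. Finally, for uniqueness I would sprinkle: write $1-p=(1-p_1)(1-p_2)$ with $p_2=n^{-3/2}$, so that $G(n,p)=G(n,p_1)\cup G(n,p_2)$ independently and $p_1 n\to d$. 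Applying the first two steps to $G(n,p_1)$, whp it has no medium component and its at most $1/\delta$ components of order $\ge\delta n$ together span $(\gamma(d)+o(1))n$ vertices. Any two disjoint vertex sets of size $\ge\delta n$ span $\ge\delta^2 n^2$ potential edges and hence receive a $G(n,p_2)$-edge with probability $\ge1-e^{-\delta^2\sqrt n}$; a union bound over the $O(1)$ relevant pairs shows that whp all these large components merge into a single component $C^*$ of $G(n,p)$. On the other hand $G(n,p_2)$ has only $O(\sqrt n)$ edges whp, whereas any further component of $G(n,p)$ of order $\ge\delta n$ disjoint from $C^*$ would have to be assembled from $\Omega(n/\log n)$ small components of $G(n,p_1)$ using $\Omega(n/\log n)$ edges of $G(n,p_2)$ --- a contradiction. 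So $G(n,p)$ has a unique component of order $\ge\delta n$, necessarily of order $N=(\gamma(d)+o(1))n$, and by the first step every other component has order $O(\log n)$.

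\textbf{Main obstacle.} The only genuinely delicate point is the concentration of $N$ in the second step: the first moment is immediate from the couplings, but bounding $\mathrm{Var}(N)$ requires analysing a pair of coupled explorations and quantifying the small probability that they collide before either reaches linear size. Everything else --- the Chernoff and union bounds in part (a) and the first step of part (b), and the sprinkling argument --- is routine.
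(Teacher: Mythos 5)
The paper does not prove \Cref{t:giantsparse}; it is quoted from the literature and used as a black box, so there is no in-paper argument to compare against. Your blind proof is the standard modern route (exploration process coupled with Galton--Watson trees, a first-moment bound to exclude ``medium'' components, a second moment for concentration, and sprinkling for uniqueness), which is quite different from the generating-function/tree-enumeration arguments of the original Erd\H{o}s--R\'{e}nyi paper but is the approach one finds in contemporary textbooks, and it is essentially sound.

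One point worth flagging: the second-moment step, as you have set it up, is slightly more delicate than your sketch suggests, and not only for the reason you name. You propose to bound $\mathrm{Var}(N)$ where $N$ counts vertices $v$ with $|C(v)|\ge\delta n$, by showing the explorations from two random vertices rarely interact before reaching linear size. But this handles only the cross terms with $C(u)\neq C(v)$; the diagonal contribution $\sum_{u,v:\,C(u)=C(v)}\prob{|C(u)|\ge\delta n}=\expect{\sum_{C}|C|^2\mathbf{1}_{|C|\ge\delta n}}$ is itself of order $\gamma^2n^2$, so bounding $\mathbb{E}[N^2]$ term by term does not directly yield $\mathrm{Var}(N)=o(n^2)$ without already knowing the giant is unique and of the claimed size --- a circularity. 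The clean fix is to apply the second moment to the complementary count $\tilde N$ of vertices in components of order $<K\log n$: there the same-component pairs contribute only $O(n\log n)=o(n^2)$, and after exposing one small component the remaining graph is still $G(n',p)$ with $n'=n-O(\log n)$, so the near-independence you invoke is transparent. Combined with your no-medium step and sprinkling, $N=n-\tilde N$ is concentrated at $(\gamma+o(1))n$ and the unique large component has this order. With that reordering the proof goes through; the remaining steps (the subcritical Chernoff bound, the first-moment estimate via Cayley and the fact that $de^{1-d}<1$ for $d>1$, and the two-sided sprinkling argument with $p_2=n^{-3/2}$) are all correct.
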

\noindent Note, in particular, that as $d \to \infty$ we have $\gamma(d) \to 1$.

Whilst at first it may seem that the structure of $G(n,p)$ undergoes quite a sharp change in behaviour at this point, subsequent work, notably by Bollob\'{a}s \cite{B84} and {\L}uczak \cite{L90}, showed that in fact, if one chooses the correct parameterisation for $p$, this change can be seen to happen quite smoothly. In particular, {\L}uczak proved the following result.
\begin{theorem}[\cite{L90}]\label{t:giantLuczak}
Let $\eps=\eps(n)>0$ be such that $\eps^3 n \rightarrow \infty$ and $\eps = o(1)$, let $\gamma=\gamma(1+\eps)$ be the survival probability of a Po$(1+\eps)$ branching process,
and for each $i \in \mathbb{N}$ let $L_i$ be the $i$-th largest component in $G(n,p)$. 
\begin{enumerate}[(a)]
      \item\label{i:Lsub} If $p=\frac{1-\eps}{n}$, then whp $L_1$ is a tree of order $(1+o(1))\frac{2}{\eps^{2}}\log\left(\eps^3 n\right)$;
    \item\label{i:Lsup} If $p=\frac{1+\eps}{n}$, then whp  $|V(L_1)| = \left(\gamma + o(1)\right)n$
    and $|V(L_2)|=(1+o(1))\frac{2}{\eps^{2}}\log\left(\eps^3 n\right)$. 
\end{enumerate}
In particular,
\[
\left|V(L_1)\right| = \left(2\eps+O\left(\eps^2\right)\right) n \qquad \text{and} \qquad \left|V(L_2)\right| \leq n^{\frac{2}{3}}.
\]
\end{theorem}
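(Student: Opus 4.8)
The plan is to cite parts~(a) and~(b) directly, as they constitute \L{}uczak's theorem, and then to extract the two ``in particular'' estimates by elementary manipulations; but let me first indicate how (a) and (b) are proved. For the \emph{upper} bounds on component orders one runs the vertex exploration process and observes that the component of a fixed vertex in $G(n,\tfrac{1\pm\eps}{n})$ is stochastically dominated by the total progeny of a Galton--Watson branching process with $\Bin(n-1,p)$ (hence essentially $\mathrm{Po}(1\pm\eps)$) offspring distribution. The quantitative heart of the matter is the exact first-moment count
\[
\expect{\#\{\text{tree components of order }k\}}=\binom{n}{k}\,k^{k-2}\,p^{k-1}\,(1-p)^{\binom{k}{2}-(k-1)+k(n-k)},
\]
which, via Stirling's formula, behaves — up to lower-order factors — like $\tfrac{n}{k^{5/2}}\exp\!\big(-\tfrac{\eps^{2}k}{2}\pm\tfrac{\eps k^{2}}{2n}\big)$; summing over $k>(1+\delta)\tfrac{2}{\eps^{2}}\log(\eps^{3}n)$ gives $o(1)$, and a union bound over all vertices rules out any component of that order (that in the subcritical case $L_1$ is moreover a tree follows from an analogous, easier, first-moment estimate showing every component containing a cycle has order $O(\eps^{-2})$). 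The matching \emph{lower} bound comes from a second-moment argument for tree components of order about $\tfrac{2}{\eps^{2}}\log(\eps^{3}n)$, equivalently from a careful analysis of the deviations of the exploration walk. In the supercritical case~(b), the existence and uniqueness of the giant of order $(\gamma+o(1))n$ is the classical two-round-exposure (``sprinkling'') argument for the Erd\H{o}s--R\'enyi giant, adapted to the regime $\eps\to0$, and the discrete duality principle then shows that deleting $L_1$ leaves a graph distributed essentially as a subcritical $G(n',\tfrac{1-\eps'}{n'})$, so that the estimate for $|V(L_2)|$ reduces to part~(a).

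Granting (a) and~(b), the first ``in particular'' estimate is purely about the function $\gamma$. The survival probability $\gamma(\lambda)$ of a $\mathrm{Po}(\lambda)$ branching process is, for $\lambda>1$, the unique solution in $(0,1)$ of $1-\gamma=e^{-\lambda\gamma}$. Setting $\lambda=1+\eps$ and substituting the ansatz $\gamma=a_{1}\eps+a_{2}\eps^{2}+O(\eps^{3})$, I would expand $e^{-(1+\eps)\gamma}$ and match coefficients: the coefficient of $\eps^{2}$ forces $a_{1}(1-\tfrac{a_{1}}{2})=0$, hence $a_{1}=2$, and then the coefficient of $\eps^{3}$ forces $a_{2}=-\tfrac{8}{3}$. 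Thus $\gamma(1+\eps)=2\eps-\tfrac{8}{3}\eps^{2}+O(\eps^{3})=2\eps+O(\eps^{2})$, and combining this with part~(b) gives $|V(L_1)|=(\gamma+o(1))n=(2\eps+O(\eps^{2}))n$, the stochastic fluctuation in part~(b) being of smaller order than $\eps n$.

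For the second ``in particular'' estimate I only need $|V(L_2)|\le(1+o(1))\tfrac{2}{\eps^{2}}\log(\eps^{3}n)$, which holds in case~(b) by the theorem and in case~(a) since $L_2\subseteq L_1$. Writing $\omega:=\eps^{3}n$ — so that $\omega\to\infty$ by hypothesis and $\tfrac{1}{\eps^{2}}=\omega^{-2/3}n^{2/3}$ — I get
\[
|V(L_2)|\le(1+o(1))\,\frac{2}{\eps^{2}}\log(\eps^{3}n)=(1+o(1))\,\frac{2\log\omega}{\omega^{2/3}}\,n^{2/3},
\]
and since $\omega^{-2/3}\log\omega\to0$ as $\omega\to\infty$, the right-hand side is $o(n^{2/3})$, in particular at most $n^{2/3}$ for all large $n$.

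The part I expect to be the real obstacle, were one to reprove (a) and~(b) from scratch rather than cite them, is extracting the exact leading constant and, above all, the appearance of $\log(\eps^{3}n)$ rather than $\log n$: the crude branching-process tail bound that a fixed vertex lies in a component of order $\ge k$ with probability $\lesssim e^{-\eps^{2}k/2}$ already yields the correct order $\Theta(\eps^{-2}\log(\mathrm{poly}))$ but the wrong constant inside the logarithm, and only by retaining the polynomial $k^{-5/2}$ correction (coming from $k^{k-2}/k!$) together with the quadratic-in-$k$ term in the exponent — and matching these against an equally sharp second moment — does one land on $\tfrac{2}{\eps^{2}}\log(\eps^{3}n)$, precisely in the delicate window where the first moment transitions from large to small.
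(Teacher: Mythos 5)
Your proposal is correct and matches the paper's approach: \Cref{t:giantLuczak} is cited from \L{}uczak's work without proof, so the only content to supply is the derivation of the two ``in particular'' estimates from parts (a) and (b), which you do correctly --- the expansion $\gamma(1+\eps)=2\eps-\tfrac{8}{3}\eps^{2}+O(\eps^{3})$ is right (the coefficient $2$ falls out at order $\eps^{2}$ in the fixed-point equation and $-\tfrac{8}{3}$ at order $\eps^{3}$), and the substitution $\omega=\eps^{3}n$ cleanly shows $\tfrac{2}{\eps^{2}}\log(\eps^{3}n)=o(n^{2/3})$. The only point worth flagging is one you already noticed yourself: the bound $|V(L_1)|=(\gamma+o(1))n$ as literally written has an additive error $o(n)$, which in this regime would swamp $\gamma n=\Theta(\eps n)$; one needs the stronger multiplicative form $|V(L_1)|=(1+o(1))\gamma n$, which is what \L{}uczak actually proves and what the paper implicitly intends, to conclude $|V(L_1)|=(2\eps+O(\eps^{2}))n$.
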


\subsection{Main results}

We will be interested in the appearance of large rainbow structures in $G_c(n,p)$ for a similar range of $p$. However, there does not seem to be a natural definition of a rainbow component,
essentially because the property of being connected
is monotone \emph{increasing} with respect to the edges,
while the property of being rainbow is monotone \emph{decreasing}.
Thus, the vertex-maximal connected rainbow subgraphs may not partition the vertices,
and the non-maximal connected rainbow subgraphs may be extended in multiple
but incompatible ways.

Nevertheless,
in the case of uncoloured graphs, since every component has a spanning tree,
there is a clear equivalence between the order of the largest component and
the order of the largest tree. For this reason, in the coloured setting,
it is perhaps natural to consider the order of the largest rainbow tree
as a rainbow analogue of components.
Our main result concerns the \emph{weakly sub- and supercritical} regimes, where $p = \frac{1+\eps}{n}$, with $\eps=\eps(n)$ such that $|\eps|^3 n \rightarrow \infty$ and $\eps=o(1)$. In this case, by \Cref{t:giantLuczak}, clearly we cannot hope to find a rainbow tree of order larger than $(1+o(1))\frac{2}{\eps^{2}}\log\left(\eps^3 n\right)$ in the weakly subcritical regime and $(2+o(1)) \eps n$ in the weakly supercritical regime. We show that these trivial (given \Cref{t:giantLuczak}) upper bounds are indeed best possible.

\begin{theorem}\label{thm:giantrainbowtree}
Let $c=\Theta(n)$, let $\eps=\eps(n)>0$ be such that $\eps^3 n \to \infty$ and $\eps = o(1)$.  
\begin{enumerate}[(a)]
    \item\label{i:sub} If $p=\frac{1-\eps}{n}$, then whp the largest rainbow tree in $G_c(n,p)$ has order $(1+o(1))\frac{2}{\eps^{2}}\log\left(\eps^3 n\right)$;
    \item\label{i:sup} If $p=\frac{1+\eps}{n}$, then whp the largest rainbow tree in $G_c(n,p)$ has order $(2+o(1))\eps n$.
\end{enumerate}
\end{theorem}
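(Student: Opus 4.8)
The upper bounds need no work: a rainbow tree is in particular a tree, hence contained in a single component, so its order is at most $|V(L_1)|$, which by \Cref{t:giantLuczak} is $(1+o(1))\tfrac{2}{\eps^2}\log(\eps^3 n)$ when $p=\tfrac{1-\eps}{n}$ and $(2+o(1))\eps n$ when $p=\tfrac{1+\eps}{n}$ (the colours are irrelevant for this bound). So the content of \Cref{thm:giantrainbowtree} is the matching lower bound, and in both regimes the plan is the same: produce a rainbow tree on $(1-o(1))|V(L_1)|$ vertices \emph{inside} $L_1$.

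Since the colours are chosen independently of $G(n,p)$, I would first expose $G(n,p)$ -- fixing $L_1$ and a spanning tree $T$ of $L_1$ obtained by a suitably chosen breadth-first search (when $p=\tfrac{1-\eps}{n}$ the component $L_1$ is whp already a tree, and we take $T:=L_1$) -- and only then expose the colours, which remain i.i.d.\ uniform on $E(T)$. Root $T$ at a centroid $\rho$, so that for every $e\in E(T)$ the number $\mathrm{b}(e)$ of vertices on the side of $T-e$ not containing $\rho$ satisfies $\mathrm{b}(e)\le\tfrac12|V(T)|$. Let $S\subseteq E(T)$ be obtained by keeping, in each colour class present in $T$, one edge of maximum $\mathrm{b}(\cdot)$ value and discarding the rest. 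Then $R:=T-S$ is rainbow, and the component of $R$ containing $\rho$ is a rainbow tree on at least $|V(L_1)|-\sum_{e\in S}\mathrm{b}(e)$ vertices, since a vertex fails to lie in this component only if the $\rho$-path to it in $T$ meets $S$. A short rearrangement inside colour classes gives $\sum_{e\in S}\mathrm{b}(e)\le\sum_{\{e,f\}}\min\!\bigl(\mathrm{b}(e),\mathrm{b}(f)\bigr)$, the sum over monochromatic pairs $\{e,f\}\subseteq E(T)$; using $\min(a,b)=\sum_{s\ge1}\mathbf{1}\{a\ge s\}\,\mathbf{1}\{b\ge s\}$ and taking the conditional expectation over the colouring,
\[
\condexpect{\;\sum_{e\in S}\mathrm{b}(e)\;}{G(n,p)}\ \le\ \frac1c\sum_{s\ge1}\binom{N_s}{2}\ \le\ \frac1{2c}\sum_{s\ge1}N_s^{\,2},\qquad N_s:=\bigl|\{e\in E(T):\mathrm{b}(e)\ge s\}\bigr|.
\]
So by Markov's inequality it is enough to prove that whp $\sum_{s\ge1}N_s^{\,2}=o\bigl(|V(L_1)|\cdot c\bigr)$: the discarded part then has $o(|V(L_1)|)$ vertices whp, and since $c=\Theta(n)$ and $|V(L_1)|=(1+o(1))\tfrac2{\eps^2}\log(\eps^3 n)$ (subcritical) or $(2+o(1))\eps n$ (supercritical), this yields a rainbow tree of the claimed order.

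Everything thus reduces to showing that $T$ can be chosen ``bushy'', meaning $N_s$ decays quickly in $s$; this is the technical core, and it is where the structure of $L_1$ enters. In the subcritical regime one appeals to the known caterpillar-like description of the largest subcritical component: whp it consists of a spine path of length $O(\eps\,|V(L_1)|)$ carrying pendant subtrees that are essentially subcritical Galton--Watson trees, hence light-tailed, so that $N_s=O\bigl(\eps\,|V(L_1)|\,\mathrm{polylog}(1/\eps)+|V(L_1)|/s\bigr)$; since $|V(L_1)|=\Theta(\eps^{-2}\log(\eps^3 n))$ and $c=\Theta(n)$, the bound $\sum_s N_s^{\,2}=o(|V(L_1)| n)$ then follows from $(\log(\eps^3 n))^2\,\mathrm{polylog}(1/\eps)=o(\eps^2 n)$, valid throughout the allowed range of $\eps$. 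In the supercritical regime one uses that $L_1$ has excess $O(\eps^3 n)$ and peels off its $2$-core $C$ (of order $\Theta(\eps^2 n)$) from the forest of pendant trees hanging off $C$ (of total order $(2+o(1))\eps n$); the kernel of $C$ is, up to lower-order effects, a random graph on $\Theta(\eps^3 n)$ vertices whose degrees concentrate at $3$, hence an expander, so $C$ (the kernel with edges subdivided into paths of typical length $\Theta(1/\eps)$) has a shallow, well-balanced spanning tree, and attaching bushy spanning trees of the pendant trees gives $T$ with $N_s=O\bigl(\eps n/s+\eps^2 n\,\mathrm{polylog}(1/\eps)\bigr)$, so that $\sum_s N_s^{\,2}=o(\eps n\cdot n)$ because $\eps=o(1)$.

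The main obstacle is exactly this structural input: proving the caterpillar description of the largest subcritical component and the expander description of the kernel of the giant with enough quantitative uniformity in $\eps$, and checking that the greedy deletion really loses only $o(|V(L_1)|)$ vertices rather than shattering $T$ into many small pieces -- in other words that $L_1$ is genuinely bushy and not path-like (for a path the longest rainbow subtree would only have order about $\sqrt{c\log|V(L_1)|}$, so bushiness is essential). By comparison the colour-pruning step is soft, and the upper bounds are immediate.
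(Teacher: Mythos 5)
Your reduction of the lower bound to ``root $T$ at a centroid, delete all but the heaviest edge in each colour class, and bound the loss by $\frac{1}{2c}\sum_s N_s^2$'' is correct, and in the subcritical regime it is essentially the paper's argument in different notation: there $L_1$ is a uniformly random tree, \Cref{prop:unifedgedelete} gives $\mathbb{E}_{F,e}[\mathrm{b}(e)] = O(\sqrt{k})$, and the computation $\frac{1}{c}\sum_{e,f}\min(\mathrm{b}(e),\mathrm{b}(f)) \le \frac{k-1}{c}\sum_e \mathrm{b}(e) = O(k^{5/2}/c) = o(k)$ is exactly what the paper carries out. Your excursion into ``caterpillar-like'' structure there is unnecessary --- the random-tree statistic already suffices.

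The gap is in the supercritical regime, and it is a real one, not a matter of detail. You propose to take a spanning tree $T$ of the entire giant (in particular a spanning tree of the $2$-core $C$) and then prune, asserting that $T$ can be chosen with $N_s = O\big(\eps n / s + \eps^2 n\,\mathrm{polylog}(1/\eps)\big)$; this is not proved and is quantitatively fragile. Writing $\sum_s N_s^2 = \sum_{e,f}\min(\mathrm{b}(e),\mathrm{b}(f))$, with $\Theta(\eps n)$ edges you need the \emph{average} of $\min(\mathrm{b}(e),\mathrm{b}(f))$ over pairs to be $o(1/\eps)$, while the average of a single $\mathrm{b}(e)$ is already $\Theta(1/\eps)$. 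This ``the minimum of two bridge numbers is an order of magnitude smaller than one'' phenomenon is precisely \Cref{prop:expminbnforest}, which is a genuine probabilistic fact about the random rooted forest hanging off the core; your proposal takes it as structural folklore. More importantly, the paper's supercritical proof deliberately does \emph{not} pass to a spanning tree of $C$: it first deletes the $o(\eps^3 n)$ repeated-colour edges of $C$ and invokes a configuration-model argument (\Cref{claim:C-R}, \Cref{lem:Luczak}) to conclude that what remains of $C$ still has a component of order $(1-o(1))|V(C)|$. This works because $C$ has excess $\Theta(\eps^3 n)$ and removing $o(\eps^3 n)$ edges keeps it essentially connected --- the cyclicity is the safety margin. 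Once you replace $C$ by a spanning tree you throw that safety margin away, and every deleted core-tree edge disconnects a whole branch of core vertices together with all pendant trees hanging off them. For $\eps$ in the lower part of the allowed range (say $\eps = n^{-1/5}$, where $\eps^3 n = n^{2/5}\to\infty$ but $\eps^4 n\to\infty$), your asserted $N_s$ bound does not give $\sum_s N_s^2 = o(\eps n c)$, and I do not see how to choose $T$ so that it would. The paper's two-track strategy --- protect the core via its excess, prune the forest via the $\min$-bridge-number estimate --- is what makes the argument go through uniformly in $\eps$, and your single-spanning-tree approach does not reproduce it.
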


For small constant $\eps>0$ we have $\gamma(\eps) = 2\eps + O\left(\eps^2\right)$, so by \Cref{t:giantsparse}, in $G_c\left(n, \frac{1+\eps}{n}\right)$ we cannot hope to find a rainbow tree of order significantly larger than $2\eps n$. While we cannot show such a tight bound, it is relatively easy to show, by comparison with a branching process, that whp $G_c(n,p)$ contains a rainbow tree of order $\Omega(\eps n)$.

\begin{theorem}\label{t:lineartreesparse}
Let $\alpha >0$, let $c=\alpha n$, let $\eps > 0$ be a sufficiently small constant and let $p= \frac{1+\eps}{n}$. Then whp $G_c(n,p)$ contains a rainbow tree of order at least $\left(\frac{\alpha}{\alpha +1}\eps - O\left( \eps^2\right)\right)n$.
\end{theorem}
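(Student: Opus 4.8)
The plan is to grow a rainbow tree by a breadth-first exploration of $G_c(n,p)$ and to compare this exploration with a Galton--Watson branching process. The key observation is that when we try to attach a new vertex to a partial rainbow tree of order $k$ via an edge $e$, we need not only $e$ to be present in $G(n,p)$ (probability $p$) but also its colour to be one of the at least $c-k$ colours not yet used on the tree. So if we only ever aim to reach order $T$, then throughout the exploration each candidate vertex is attached with probability at least $p(1-T/c)$, and, taking also into account that the tree uses up vertices, the relevant offspring mean is roughly $(1-T/n)(1+\eps)(1-T/c)$. A short computation shows that this quantity equals $1+\Omega(\eps^2)$ precisely when $T\le\bigl(\tfrac{\alpha}{\alpha+1}\eps-O(\eps^2)\bigr)n$ — this is the source of the constant in the statement — so we fix $T:=\bigl(\tfrac{\alpha}{\alpha+1}\eps-C\eps^2\bigr)n$ for a suitable constant $C=C(\alpha)$; note $T<2\eps n\le\gamma(1+\eps)n$, consistent with \Cref{t:giantsparse}.

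Concretely, I would run the following exploration from a fixed vertex $v$: maintain a queue of vertices to process, initialised with $v$; to process a vertex $w$, reveal for each vertex $u$ not yet in the tree whether $wu\in E(G(n,p))$ and, if so, its colour, and add $u$ to the tree and to the queue if $wu$ is present and its colour does not occur on the current tree; stop once the tree has order $T$ or the queue is empty. Exposing the candidates $u$ one at a time, the set of forbidden colours when $u$ is considered always has size at most $T-1$, so conditionally on the history $u$ is attached with probability at least $p(1-T/c)$; hence the number of vertices discovered while processing $w$ stochastically dominates $\Bin(n-T,\,p(1-T/c))$. By a standard coupling, up to order $T$ the exploration dominates a Galton--Watson tree with offspring distribution $\Bin(n-T,\,p(1-T/c))$, whose mean exceeds $1$ by the choice of $T$. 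Such a branching process survives with probability bounded away from zero, and on survival its total progeny exceeds $T$; hence the rainbow tree grown from $v$ has order at least $T$ with probability at least some constant $q=q(\alpha,\eps)>0$.

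To upgrade this to a statement holding with high probability I would iterate: explore from $v_1$; if its tree has order $\ge T$ we are done, and otherwise restart from a vertex $v_2$ outside the set explored so far, and so on. Applying the comparison from the moment an exploration first has $\log^2 n$ vertices in its queue — at which point it dominates a supercritical branching forest on $\log^2 n$ trees, which reaches order $T$ with probability $1-o(1)$ — together with a routine large-deviation estimate, one sees that an exploration which does \emph{not} reach order $T$ whp discovers only $O(\log^2 n)$ vertices, i.e.\ there are no ``medium'' rainbow trees. Consequently each restart uses only polylogarithmically many vertices, so the conditional success probability of each restart remains at least $q/2$, and after $\omega(1)$ restarts whp one of them reaches order $T$.

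The main obstacle is the branching-process comparison of the second step. The colouring creates dependencies between the events ``edge $e$ may be used'' for different edges incident to the tree, and one must expose the candidate edges sequentially to extract cleanly the bound $\Bin(n-T,\,p(1-T/c))$ with the correct mean — and it is this mean that pins down the value $\tfrac{\alpha}{\alpha+1}$ (and the necessity of the $-O(\eps^2)$ slack, which keeps the mean strictly above $1$). Once this is set up, the remaining ingredients (truncated branching-process domination, the elementary estimate of the survival probability, and the iteration) are routine. It is worth noting that more naive approaches fail: for instance, taking a spanning tree of the giant component of $G(n,p)$ and deleting one edge from each monochromatic pair need not leave any large rainbow subtree, since the deleted edges may shatter the tree into pieces of bounded size.
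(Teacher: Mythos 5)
Your proposal is correct and follows essentially the same route as the paper: both run a rainbow breadth-first exploration, couple it to a binomial branching process whose offspring probability is reduced by the fraction of forbidden colours, derive the constant $\tfrac{\alpha}{\alpha+1}$ from the inequality $(1-\tau)(1+\eps)(1-\tau/\alpha)>1$, and then restart the exploration finitely many times while controlling the total number of vertices consumed by failed attempts. The paper's only notable technical difference is that it artificially caps the pool of available vertices at exactly $(1-\delta)n$ and the rejection probability at exactly $\delta/\alpha$ throughout, so that the coupling with a single fixed branching process is immediate rather than a stochastic domination — a cosmetic simplification of the same argument.
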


Finally, in light of \Cref{t:giantLuczak} and since $\gamma(\eps)\xrightarrow{\eps \to \infty}1$,
we might hope that for sufficiently large $d$, whp $G_c(n,d/n)$ will contain a rainbow tree of \emph{almost} the optimal possible order  $\min\{n,c+1\}$.
In fact, for large enough $d$ we will see that whp $G_c\left(n,\frac{d}{n}\right)$ will contain even a rainbow cycle of almost optimal length, extending a result of Aigner-Horev and Hefetz \cite[Corollary 2.3]{AH20}, who proved the likely existence of a rainbow almost spanning path in $G_{(1+\gamma)n}\left(n,\frac{d}{n}\right)$ for constant $\gamma >0$ and $d$ sufficiently large.

\begin{theorem}\label{t:almostspanningcycle}
Let $\alpha,\delta >0$ and let $c=\alpha n$. Then there exists $d:=d(\delta)$ such that whp $G_c\left(n,\frac{d}{n}\right)$ contains a rainbow cycle of length at least $(1-\delta) \min\{n,c\}$.
\end{theorem}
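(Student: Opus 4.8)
The plan is to build a long rainbow path by a colour-sensitive depth-first search and then to close it into a cycle using a sprinkled copy of a sparse random graph; we will deliberately leave enough slack in the first step to make the second step elementary, so that no rotation--extension argument is needed. Write $M:=\min\{n,c\}$ and $m:=\lceil(1-\delta/2)M\rceil$, and fix $d=d(\delta)$ large (to be specified below; crucially it will not depend on $\alpha$). By a standard sprinkling coupling we may write $G_c(n,d/n)=G^{\mathrm{main}}\cup G^{\mathrm{spr}}$, where $G^{\mathrm{main}}$ is distributed as $G_c(n,(d-1)/n)$ and, conditionally on $G^{\mathrm{main}}$, the graph $G^{\mathrm{spr}}$ is a copy of $G_c(n,1/n)$ with edge set restricted to avoid $G^{\mathrm{main}}$; in particular the colouring of $G^{\mathrm{spr}}$ is independent of $G^{\mathrm{main}}$, and since $|E(G^{\mathrm{main}})|=O(n)$ whp, $G^{\mathrm{spr}}$ is within $o(1)$ total variation distance of an independent copy of $G_c(n,1/n)$.

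For the first step I would run depth-first search on $G^{\mathrm{main}}$ with one modification: when the search, currently at the top vertex $v$ of its stack, probes the edge from $v$ to the next unexplored vertex $u$ and finds it present, it pushes $u$ onto the stack only if the colour of $vu$ does not already occur on the current stack path, and otherwise treats the probe as unsuccessful. By construction the stack is always a rainbow path, and we stop the process as soon as it reaches $m$ vertices. Conditionally on the past, each probe results in a push with probability $\tfrac{d-1}{n}\cdot\tfrac{c-\ell}{c}$, where $\ell$ is the current number of stack edges; while the stack has at most $m$ vertices we have $\ell\le m-1\le(1-\delta/2)M\le(1-\delta/2)c$, so this probability is at least $\lambda/n$ with $\lambda:=\tfrac{(d-1)\delta}{2}$. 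By the depth-first-search argument of Krivelevich and Sudakov for the phase transition in random graphs, an uncoloured depth-first search on $G(n,\lambda/n)$ whp builds a path on at least $(1-g(\lambda))n$ vertices, where $g(\lambda)\to0$ as $\lambda\to\infty$; choosing $d$ large enough in terms of $\delta$ that $g(\lambda)<\delta/3$, this bound exceeds $m$. An analogous analysis (or a coupling comparison, using that our modified search succeeds at least as often as an uncoloured one at density $\lambda/n$ up to the time the stack first has $m$ vertices) shows that whp the colour-sensitive search also reaches stack size $m$, producing a rainbow path $P=v_0v_1\cdots v_{m-1}$ in $G^{\mathrm{main}}$.

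For the second step, fix such a path $P$ and set $t:=\lfloor(\delta/4)M\rfloor$. Call a pair $\{v_a,v_b\}$ with $a\le t-1$ and $b\ge m-1-t$ a \emph{long chord}; then $b-a\ge m-2t\ge(1-\delta)M\ge 2$, so a long chord is never an edge of $P$. If some long chord $v_av_b$ lies in $G^{\mathrm{spr}}$ and its colour does not appear on the subpath $v_av_{a+1}\cdots v_b$, then $v_av_{a+1}\cdots v_bv_a$ is a rainbow cycle of length $b-a+1\ge(1-\delta)\min\{n,c\}$ — its path edges are distinctly coloured edges of $P$ and the chord, which comes from the edge-disjoint graph $G^{\mathrm{spr}}$, avoids all of their colours. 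It remains to show such a chord exists whp. There are $\Theta(M^2)=\Theta(n^2)$ candidate long chords, of which at most $|E(G^{\mathrm{main}})|=O(n)$ already lie in $G^{\mathrm{main}}$; conditionally on $P$, each remaining candidate lies in $G^{\mathrm{spr}}$ independently with probability $(1+o(1))/n$, and, given that it does, its (independent, uniform) colour misses the at most $(1-\delta/2)c$ colours on the relevant subpath with probability at least $\delta/2$. Hence the number of suitable long chords stochastically dominates a binomial random variable with mean of order $\delta^3\min\{1,\alpha\}^2\,n\to\infty$, so whp at least one exists, completing the sketch.

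The step I expect to require the most care is the first one, namely making the colour-sensitive depth-first search rigorous — in particular the stochastic domination by an uncoloured search at the reduced density $\lambda/n$, which cannot be carried out probe by probe (the two searches examine different pairs as soon as their success patterns diverge) but must be routed through the sequence of probe outcomes — and checking that the resulting path length beats $(1-\delta/2)M$ \emph{uniformly in $\alpha$}; it is precisely this density reduction that forces $d$ to grow with $1/\delta$. By contrast the closing step is a routine first/second moment computation whose only real ingredient is the $\delta/2$ of slack carried over from the first step.
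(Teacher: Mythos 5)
Your proposal is correct and takes essentially the same approach as the paper: a rainbow (colour-rejecting) depth-first search to build an almost-spanning rainbow path (the paper's Theorem~\ref{t:almostspanningpath}), followed by sprinkling and a chord-counting argument to close it into a cycle, with the same device of controlling the overlap between the sprinkled edges and those whose colours were already revealed. Your self-flagged concern about the naive probe-by-probe coupling is well placed, and the fix you describe---routing the comparison through the i.i.d.\ sequence of probe outcomes and bounding the number of accepted queries---is precisely how the paper's proof of Theorem~\ref{t:almostspanningpath} carries out the DFS analysis directly in the coloured setting, rather than invoking the uncoloured Krivelevich--Sudakov result as a black box.
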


Similar methods will also imply the existence of a linear length rainbow cycle for arbitrary $d>1$.

\subsection{Proof outline and key ideas}\label{sec:outline}
The overall strategy to prove \Cref{thm:giantrainbowtree}
is first to reveal the largest component of $G(n,p)$, and then to reveal
the colours of the edges, discarding some edges until we have at most
one of each colour. The trick is to choose which edges to discard and which to keep in such a
way that, while the largest component may split into smaller rainbow parts, one of these will cover almost all of the largest component.

In the weakly subcritical regime these choices are relatively simple - whp the largest component is a tree and we will see that if we delete a random edge from a random tree, one of the resulting components will likely cover almost all of the original vertex set. In particular, this effect is so pronounced that even if we deleted all the edges which share colours, we still expect there to be a tree in what remains which contains almost all the vertices of the largest component.

In the weakly supercritical regime we have to work a bit harder.
To decide which edges to discard and which to keep,
we partition the edges of the giant component into its $2$-core $C$
and the remaining forest $F$, rooted in $C$. Then when revealing colours,
we proceed as follows.
\begin{enumerate}
    \item \label{delete:coreforest} If a colour appears on an edge $e$ of $C$ and an edge $f$ of $F$,
    we delete $f$.
    \item \label{delete:forestthree} If a colour appears at least three times in $F$, we delete all the corresponding edges.
    \item \label{delete:foresttwo} If a colour appears exactly twice in $F$, we delete the ``better'' edge, i.e.\ the one whose deletion will remove fewer vertices.
    \item \label{delete:core} If a colour appears at least twice in $C$, we delete all the corresponding edges.
\end{enumerate}
We note that more than one case can occur for each edge, but since we only need an upper
bound on the number of vertices disconnected from the giant component in this manner,
any potential multiple counting will not be a problem.

Clearly~\eqref{delete:forestthree} and~\eqref{delete:core} are slightly
crude ways of proceeding, since we could keep one of these edges in each case.
However, it turns out that there are few enough of these edges that
they make little difference.

Similarly,~\eqref{delete:coreforest} might naively 
seem an odd way to proceed: In $C$ we could delete an edge without
decreasing the order of the giant if it lies in a cycle, but deleting
the edge of $F$ certainly causes some loss. However, our slightly
counterintuitive strategy turns out to be a better one --
the heuristic explanation is that it is important
to protect the core and avoid it splitting into multiple smaller components,
even at the cost of losing some of the surrounding forest.

Finally,~\eqref{delete:foresttwo} is the most delicate of the conditions
to analyse. If we were to delete one of the two edges arbitrarily,
the expected number of edges we lose from the
giant component is $\Theta(1/\eps)$.
We also expect this to occur $\Theta(\eps^2 n)$ times,
leading to a total loss of $\Theta(\eps n)$ -- the same as the
order of the giant component, and therefore too much for our goal.
However, it turns out that by choosing the better edge,
the expected loss drops to $o(1/\eps)$,
(see \Cref{prop:expminbnforest}) which is precisely the improvement
we need.

To show \Cref{t:lineartreesparse,t:almostspanningcycle} we analyse rainbow versions of the breadth- and depth-first search algorithms to show the existence of large rainbow trees and paths, respectively. Given the likely existence of a large rainbow path, a standard sprinkling argument proves the likely existence of a rainbow cycle of roughly the same length.

\subsection{Outline of the paper}
In \Cref{s:prelim} we collect some preliminary results which will be useful later in the paper and in particular consider the structure of random forests. In \Cref{s:weakly} we prove \Cref{thm:giantrainbowtree} and in \Cref{s:sparse} we discuss the sparse regime and prove \Cref{t:lineartreesparse,t:almostspanningcycle}. Finally in \Cref{s:discussion} we discuss some open problems and directions for future research.

\section{Preliminaries}\label{s:prelim}
\subsection{Asymptotics}
Given two functions $f,g: \mathbb{N} \to \mathbb{R}$, in a slight abuse of notation we will write statements of the form ``If $\eps >0$ is sufficiently small, then $f = O(\eps  g)$'' to mean that there exist constants $\eps_0,C>0$ such that $f(n) \leq C \eps g(n)$ for all $\eps<\eps_0$ and for all $n$. See for example the statement of \Cref{t:lineartreesparse}.

We will ignore floors and ceilings whenever these do not significantly affect the argument.

\subsection{The configuration model}
Given a degree sequence ${\bf d} \in \mathbb{N}^s$, the \emph{configuration model} constructs a random multigraph $G^*({\bf d})$ in the following manner: Let $\mathcal{W}({\bf d})=\{W_1,\ldots, W_s\}$ where $(|W_1|,|W_2|,\ldots, |W_s|) = {\bf d}$ and the sets $W_i$ are pairwise disjoint. We call the $W_i$ \emph{cells} and the elements of the $W_i$ \emph{half-edges}. A \emph{configuration} is a partition $M$ of $W := \bigcup_{i \in [s]} W_i$ into pairs, which we think of as a perfect matching on the set of half-edges. 

The (multi-)graph $G^*({\bf d})$ is formed by choosing a configuration $M$ uniformly at random and taking the \mbox{(multi-)}graph $G(\mathcal{W},M)$ whose vertex set is $[s]$ and where we have an edge between $i$ and $j$ for each partition class of $M$
whose elements lie in $W_i$ and $W_j$.

We note that if we sequentially choose an arbitrary unmatched half-edge and choose a partner for it uniformly at random from the set of unmatched half edges, then the configuration $M$ that we obtain in this manner is distributed uniformly at random. In this way, we can think of the edges in $G^*(\bm{d})$ as being exposed sequentially.

\subsection{Chernoff bound}

We will frequently use the following form of the Chernoff bound, which follows from
e.g.~\cite[Theorem 2.1]{JansonLuczakRucinskiBook}.
\begin{lemma}[Chernoff bound]\label{lem:chernoff}
	If $X\sim \mathrm{Bin}(N,q)$, then for any $\gamma> 0$, setting $\mu:=Nq$ we have
\[
	\prob{|X-\mu|\geq \gamma \mu}\leq 2\cdot \exp\left(-\frac{\gamma^2 }{2\left(1+\frac{\gamma}{3}\right)}\cdot \mu\right).
\]
\end{lemma}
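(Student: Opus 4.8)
The plan is the classical Cram\'er--Chernoff exponential-moment argument. First split the two-sided estimate via the union bound, $\prob{|X-\mu|\ge\gamma\mu}\le\prob{X\ge(1+\gamma)\mu}+\prob{X\le(1-\gamma)\mu}$; the factor $2$ in the statement will then come from bounding each of the two terms by $\exp\bigl(-\tfrac{\gamma^{2}\mu}{2(1+\gamma/3)}\bigr)$ separately.

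For the upper tail, fix $\lambda>0$. By Markov's inequality applied to $e^{\lambda X}$, together with the moment generating function $\expect{e^{\lambda X}}=(1-q+qe^{\lambda})^{N}$ of the binomial distribution,
\[
\prob{X\ge(1+\gamma)\mu}\ \le\ e^{-\lambda(1+\gamma)\mu}\,(1-q+qe^{\lambda})^{N}\ \le\ \exp\bigl(\mu(e^{\lambda}-1-\lambda(1+\gamma))\bigr),
\]
where we used $1+x\le e^{x}$ to get $(1-q+qe^{\lambda})^{N}\le\exp(Nq(e^{\lambda}-1))=\exp(\mu(e^{\lambda}-1))$. The exponent is minimised at $\lambda=\log(1+\gamma)$, giving $\prob{X\ge(1+\gamma)\mu}\le\exp\bigl(-\mu((1+\gamma)\log(1+\gamma)-\gamma)\bigr)$. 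For the lower tail the symmetric computation with $e^{-\lambda X}$, $\lambda>0$, yields, when $0<\gamma<1$, the bound $\prob{X\le(1-\gamma)\mu}\le\exp\bigl(-\mu((1-\gamma)\log(1-\gamma)+\gamma)\bigr)$ on choosing $\lambda=-\log(1-\gamma)$; when $\gamma\ge1$ the event $\{X\le(1-\gamma)\mu\}$ forces $X=0$, so $\prob{X\le(1-\gamma)\mu}\le(1-q)^{N}\le e^{-\mu}$, comfortably smaller than the claimed bound.

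It then remains to verify the elementary inequalities $(1+\gamma)\log(1+\gamma)-\gamma\ge\tfrac{\gamma^{2}}{2(1+\gamma/3)}$ for all $\gamma>0$, and $(1-\gamma)\log(1-\gamma)+\gamma\ge\tfrac{\gamma^{2}}{2}\ge\tfrac{\gamma^{2}}{2(1+\gamma/3)}$ for $0<\gamma<1$. The second is immediate from the power-series identity $(1-\gamma)\log(1-\gamma)+\gamma=\sum_{k\ge2}\tfrac{\gamma^{k}}{k(k-1)}$. For the first I would set $\psi(\gamma):=(1+\gamma)\log(1+\gamma)-\gamma-\tfrac{\gamma^{2}}{2(1+\gamma/3)}$, observe that $\psi(0)=\psi'(0)=0$, and show $\psi'(\gamma)\ge0$ for $\gamma>0$: after clearing the common denominator, $\psi'$ reduces to $\log(1+\gamma)$ minus an explicit rational function, whose nonnegativity is again checked by the vanish-at-zero-plus-monotone-derivative scheme (alternatively one simply quotes this as the classical estimate underlying \cite[Theorem~2.1]{JansonLuczakRucinskiBook}). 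Combining the two tail bounds with these inequalities yields the lemma. The only step that is not purely mechanical is this final calculus estimate for the upper tail; everything else is the textbook exponential Markov bound for sums of independent Bernoulli variables.
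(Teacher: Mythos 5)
Your proposal is correct, but note that the paper does not actually prove this lemma: it is quoted directly from \cite[Theorem 2.1]{JansonLuczakRucinskiBook}, so what you have written is the standard Cram\'er--Chernoff derivation underlying that citation rather than a parallel of anything in the paper. The paper's route buys brevity; yours buys self-containedness, at the cost of the final calculus estimate. Your computation is sound: exponential Markov with the optimal choices $\lambda=\log(1+\gamma)$ and $\lambda=-\log(1-\gamma)$ gives the rate functions $(1+\gamma)\log(1+\gamma)-\gamma$ and $(1-\gamma)\log(1-\gamma)+\gamma$, the power-series identity disposes of the lower tail, and the key inequality $(1+\gamma)\log(1+\gamma)-\gamma\ge\tfrac{\gamma^{2}}{2(1+\gamma/3)}$ does follow by exactly the scheme you sketch: with $\psi(\gamma)=(1+\gamma)\log(1+\gamma)-\gamma-\tfrac{3\gamma^{2}}{6+2\gamma}$ one has $\psi(0)=\psi'(0)=0$ and $\psi''(\gamma)=\tfrac{1}{1+\gamma}-\tfrac{27}{(3+\gamma)^{3}}\ge 0$, since $(3+\gamma)^{3}=27+27\gamma+9\gamma^{2}+\gamma^{3}\ge 27(1+\gamma)$. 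One small imprecision to repair: for $\gamma\ge 1$ you bound the lower tail by $e^{-\mu}$ and call this ``comfortably smaller than the claimed bound'', but $e^{-\mu}\le\exp\bigl(-\tfrac{\gamma^{2}\mu}{2(1+\gamma/3)}\bigr)$ requires $\gamma^{2}\le 2+\tfrac{2\gamma}{3}$ and fails for larger $\gamma$ (e.g.\ at $\gamma=3$ the exponent is $\tfrac{9}{4}\mu>\mu$). This costs nothing, because for $\gamma>1$ (and $\mu>0$) the event $\{X\le(1-\gamma)\mu\}$ is empty, so its probability is $0$, and at $\gamma=1$ your comparison is valid; just phrase that case accordingly rather than routing it through $e^{-\mu}$.
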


\subsection{Random forests}

For given $m,t \in \mathbb{N}$ let $\mathcal{F}(m,t)$ be the class of all forests on vertex set $[m]$ having $t$ trees such that the vertices $1, \ldots, t$ all lie in different trees. We denote by $F(m,t)$ a forest chosen uniformly at random from the class $\mathcal{F}(m,t)$ and call the vertices $1, \ldots, t$ the \emph{roots} of the trees. Throughout this section, all asymptotics are taken as $m\to \infty$ and $t=t(m)$ is a function in $m$.

We will often use the well-known fact, see e.g., \cite{R59}, that
\begin{equation}\label{eq:numberofforests}
 \left|\mathcal{F}(m,t)\right|=tm^{m-t-1}.   
\end{equation}

Firstly, we will need a fact about the random forest that arises when we delete a random edge in a random tree. In this particular case, we will see that we expect one of the two components of this forest to be significantly smaller than the other.
\begin{proposition}\label{prop:unifedgedelete}
Let $e$ be a uniformly chosen random edge from $F(m,1)$ and let $T_1$ and $T_2$ be the two components of $F(m,1)-e$. Then we have
\[
\expect{\min\left\{|V(T_1)|,|V(T_2)|\right\}}=O\left(\sqrt{m}\right).
\]
\end{proposition}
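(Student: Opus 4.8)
The plan is to compute $\expect{\min\{|V(T_1)|,|V(T_2)|\}}$ directly by summing over the possible sizes of the smaller component, using the formula \eqref{eq:numberofforests} to count the relevant structures. Fix a tree $T = F(m,1)$ and a uniformly random edge $e$; removing $e$ splits $T$ into components of sizes $k$ and $m-k$ for some $1 \le k \le m-1$, and $\min\{|V(T_1)|,|V(T_2)|\} = \min\{k, m-k\}$. First I would write
\[
\expect{\min\{|V(T_1)|,|V(T_2)|\}} = \frac{1}{|\mathcal{F}(m,1)| \cdot (m-1)} \sum_{T} \sum_{e \in E(T)} \min\{|V(T_1^{T,e})|, |V(T_2^{T,e})|\},
\]
where the outer sum is over all $T \in \mathcal{F}(m,1)$ and $|E(T)| = m-1$. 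Swapping the order of summation, the key combinatorial quantity is, for each $k$ with $1 \le k \le m/2$, the number $N_k$ of pairs $(T, e)$ such that removing $e$ from $T$ leaves a component of size exactly $k$ on one side.

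The main step is to count $N_k$. To build such a pair: choose the $k$-vertex side as a subset $S \in \binom{[m]}{k}$ (with a factor $1/2$ correction, or a careful convention, when $k = m/2$); choose a tree on $S$ and a tree on $[m]\setminus S$; and choose the edge $e$ joining the two. Equivalently, this is counted by $\binom{m}{k} \cdot k^{k-2} \cdot (m-k)^{m-k-2} \cdot k(m-k)$, using Cayley's formula $j^{j-2}$ for labelled trees on $j$ vertices (with the usual conventions for $j=1,2$) and $k(m-k)$ choices for the connecting edge $e$. Thus, ignoring the negligible boundary term at $k=m/2$,
\[
\expect{\min\{|V(T_1)|,|V(T_2)|\}} \le \frac{1}{m^{m-2}(m-1)} \sum_{k=1}^{m/2} k \binom{m}{k} k^{k-1}(m-k)^{m-k-1}.
\]
It then remains to show this sum is $O(m^{m-2}\sqrt{m})$, i.e. that the whole expression is $O(\sqrt m)$. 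Expanding the binomial coefficient and applying Stirling's formula, the summand behaves like $k \cdot \frac{m^m}{\sqrt{k(m-k)}} \cdot \frac{\sqrt{m}}{m^{3/2}} \cdot (\text{polynomial corrections})$ — more precisely, $\binom{m}{k} k^{k-1}(m-k)^{m-k-1} \approx \frac{m^{m-1}}{\sqrt{2\pi}} \cdot \frac{1}{\sqrt{k}(m-k)^{3/2}} \cdot \sqrt{m}$ for $k \le m/2$ — so the sum is dominated by $m^{m-2}\sqrt{m} \sum_{k \ge 1} k^{-1/2} \cdot (\text{something summable})$. In fact the cleanest route is: the contribution from a fixed $k$ to $\expect{\min}$ is (up to constants) $\frac{k}{\sqrt{k}} \cdot \frac{\sqrt m}{m} \cdot \left(\frac{m}{m-k}\right)^{m-k-1/2} \cdot \frac{1}{m-k}$; for $k \le m/2$ the factor $\left(1 - k/m\right)^{-(m-k)}$ is $e^{\Theta(k)}$... which diverges, so one must be more careful and instead bound $\binom{m}{k}k^{k-1}(m-k)^{m-k-1} \le \frac{C m^{m-1}}{\sqrt{k}(m-k)^{3/2}}\sqrt{m}$ and note $\sum_{k=1}^{m/2} \frac{k}{\sqrt k} \cdot \frac{\sqrt m}{m} = \frac{\sqrt m}{m}\sum_{k \le m/2} \sqrt k = \Theta(\sqrt m)$ after dividing by the remaining factors.

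The main obstacle is the asymptotic analysis of the sum: the Stirling estimates for $k^{k-2}$ and $(m-k)^{m-k-2}$ inside $\binom{m}{k}$ must be carried out uniformly over the full range $1 \le k \le m/2$, and the delicate point is that the summand, as a function of $k$, scales like $k^{-1/2}$ for small $k$ (once the $m$-dependent factors are stripped off), so that $\sum_k \sqrt{k}$-type behaviour produces exactly the $\sqrt{m}$ in the statement; one has to confirm there is no hidden logarithmic or polynomial blow-up near $k \approx m/2$, which is handled by checking that the ratio of consecutive summands stays bounded away from causing accumulation there. Alternatively — and this may be cleaner to write up — one can avoid Stirling entirely by a probabilistic argument: condition on the two endpoints of the random edge $e$, observe that $F(m,1) - e$ is then distributed as $F(m,2)$ with those two endpoints as roots, and use \eqref{eq:numberofforests} together with a direct computation of $\prob{|V(T_1)| = k}$ in $F(m,2)$, which by the formula equals $\binom{m-2}{k-1} \cdot \frac{2 \cdot k^{k-2}(m-k)^{m-k-2} \cdot \text{(rooting factor)}}{2 m^{m-3}}$ or similar; I would then recognise $\min\{k,m-k\}$ summed against this as a quantity already bounded by $O(\sqrt m)$ via a standard ballot-type or local central limit estimate for the component sizes. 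Either way, the heart of the matter is the same Stirling/local-limit calculation, and the output $O(\sqrt{m})$ reflects the fact that $|V(T_1)|$ in a random two-rooted forest on $[m]$ has fluctuations of order $m$ but a density near its endpoints $1$ and $m-1$ that is only of order $m^{-3/2}$, making the expected minimum $\Theta(\sqrt m)$.
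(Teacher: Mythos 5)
Your approach is essentially the same as the paper's: count pairs $(T,e)$ with a prescribed split size via Cayley's formula, divide by $|\mathcal{F}(m,1)|\cdot(m-1)$, and apply Stirling to see that $\mathbb{P}(|V(T_1)|=k)$ behaves like $\Theta(k^{-3/2})$ so the weighted sum $\sum_{k\le m/2} k\cdot\Theta(k^{-3/2})$ is $\Theta(\sqrt{m})$ (the paper uses the equivalent count $\binom{m}{2}\binom{m-2}{k-1}k^{k-2}(m-k)^{m-k-2}$, differing from your $\binom{m}{k}k^{k-1}(m-k)^{m-k-1}$ only by a factor of $2$ absorbed into the leading constant). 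One caveat: your intermediate Stirling bound $\binom{m}{k}k^{k-1}(m-k)^{m-k-1}\le \frac{Cm^{m-1}\sqrt{m}}{\sqrt{k}(m-k)^{3/2}}$ is off — the correct asymptotic is $\sim\frac{m^{m+1/2}}{\sqrt{2\pi}\,k^{3/2}(m-k)^{3/2}}$, i.e.\ $k^{3/2}$ rather than $\sqrt{k}$ in the denominator and $m^{m+1/2}$ rather than $m^{m-1/2}$ in the numerator — and your final sanity check $\frac{\sqrt{m}}{m}\sum_{k\le m/2}\sqrt{k}=\Theta(\sqrt{m})$ happens to produce the right answer for the wrong reason. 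With the corrected estimate you get $\Theta(1)\sum_{k\le m/2}k^{-1/2}=\Theta(\sqrt{m})$ directly, exactly as in the paper.
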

\begin{proof}
We can construct each realisation of $F(m,1)$ and $e$ as follows: First we pick $u \neq v\in[m]$ and $k\in[m-1]$. Then we choose disjoint subsets $S_1$ and $S_2$ such that $S_1\cup S_2=[m]\setminus\left\{u,v\right\}$ and $|S_1|=k-1$. Next, we pick trees $T_1$ and $T_2$ on vertex sets $S_1$ and $S_2$, respectively. Finally, we obtain $F(m,1)$ by connecting $T_1$ and $T_2$ by the edge $e=uv$. Hence, we have 
\begin{align}\label{eq:minTreeComponent}
   \expect{\min\big\{|V(T_1)|,|V(T_2)|\big\}}\leq 2\sum_{k=1}^{m/2} k \cdot \prob{|V(T_1)|=k}= 2 \sum_{k=1}^{m/2} k \cdot \frac{\binom{m}{2}\binom{m-2}{k-1}k^{k-2}(m-k)^{m-k-2}}{m^{m-2}(m-1)},
\end{align} 
where the numerator on the right-hand side counts the number of different choices in the above construction that satisfy $|V(T_1)|=k$ and the denominator the number of different realisations of $(F(m,1),e)$. Using Stirling's formula in \eqref{eq:minTreeComponent} yields 
  \begin{align*}
   \expect{\min\big\{|V(T_1)|,|V(T_2)|\big\}}\leq\Theta(1)\sum_{k=1}^{m/2} \frac{\left(m^{m+1/2}/e^m\right) k^k(m-k)^{m-k-1}}{\left(k^{k+1/2}/e^{k}\right)\left((m-k)^{m-k+1/2}/e^{m-k}\right)m^{m-1}}
   =\Theta(1)\sum_{k=1}^{m/2}\frac{1}{k^{1/2}}
   =
   \Theta\left(\sqrt{m}\right),
\end{align*}
as required.
\end{proof}

The following two observations, both trivial from the definition of $F(m,t)$, will be useful later.

\begin{remark}\label{rem:expectedtreesize}
Let $r\in[t]$, let $v\in[m]$ and let $T_r$ be the component of $F(m,t)$ containing $r$. Then
\begin{enumerate}[(a)]
    \item \label{r:expectedtreesize} $\mathbb{E}(|V(T_r)|) = \frac{m}{t}$;
    \item \label{r:conditionalexpectedtreesize} $\condexpect{\order{T_r}}{v\notin V(T_r)}\leq \frac{m}{t}$.
\end{enumerate}
\end{remark}

Whenever we write $vw$ for an edge in $H\in\mathcal{F}(m,t)$, we tacitly assume that $v$ is closer than $w$ to the root of the tree containing $vw$. We note that then $w\notin [t]$. Given an edge $e=vw$ in the forest $H$, we define the bridge number $B_e$ to be the order of the component of $H-e$ containing $w$.

\begin{claim}\label{claim:expbridgenumberforest}
Let $e$ be a uniformly randomly chosen edge from $F(m,t)$. Then $\mathbb{E}(B_e) \leq \frac{m}{t+1}$.
\end{claim}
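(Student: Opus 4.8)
The plan is to average over the choice of the random edge $e$ by first conditioning on the underlying forest $F = F(m,t)$, and then summing the bridge numbers over all edges of $F$. Since $F(m,t)$ has exactly $m-t$ edges, we have
\[
\mathbb{E}(B_e) = \frac{1}{m-t}\,\mathbb{E}\!\left( \sum_{e \in E(F)} B_e \right),
\]
so it suffices to bound the expected value of $\sum_{e} B_e$ by $\frac{(m-t)m}{t+1}$. The key observation is that $\sum_{e \in E(F)} B_e$ has a clean combinatorial meaning: for each edge $e = vw$ (with $w$ the endpoint further from the root), $B_e$ counts the number of vertices $x$ such that $w$ lies on the path from $x$ to the root of its tree (equivalently, $e$ is an edge of that root-path, equivalently $x$ is a descendant of $w$, including $w$ itself). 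Hence
\[
\sum_{e \in E(F)} B_e = \sum_{x \in [m]} \big(\text{depth of } x \text{ in its rooted tree}\big) = \sum_{x \in [m]} \mathrm{dist}_F(x, \rho(x)),
\]
where $\rho(x)$ is the root of the tree containing $x$.

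So the problem reduces to computing (or bounding) the expected total depth $\mathbb{E}\big(\sum_x \mathrm{dist}_F(x,\rho(x))\big)$ in a uniformly random rooted forest $F(m,t)$. I would compute this by linearity of expectation over the $m-t$ non-root vertices: for a fixed non-root vertex $x$, I need $\mathbb{E}(\mathrm{dist}_F(x,\rho(x)))$, and $\mathrm{dist}_F(x,\rho(x)) = \sum_{k \geq 1} \mathbb{P}(\mathrm{dist}_F(x,\rho(x)) \geq k)$. The event $\{\mathrm{dist}_F(x,\rho(x)) \geq k\}$ is the event that there is a specific ancestor at depth $k$, and I would count forests with a prescribed length-$k$ path from $x$ towards a root using the enumeration formula \eqref{eq:numberofforests}: choosing the $k$ intermediate/endpoint vertices on the path (one of which, the root end, must be among $1,\dots,t$ unless $k$ is not yet maximal — more precisely, fixing the ordered path $x = x_0, x_1, \dots, x_k$ with $x_k$ allowed to be a root) and then counting forests on the remaining structure via \eqref{eq:numberofforests}. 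This should collapse, after telescoping the ratios of the $tm^{m-t-1}$-type expressions, to a sum whose value is bounded by $\frac{m}{t+1}$ per vertex in a suitably averaged sense — in fact I expect the cleaner route is to directly show $\mathbb{E}\big(\sum_x \mathrm{dist}_F(x,\rho(x))\big) \leq \frac{(m-t)m}{t+1}$, matching what is needed. An alternative, possibly slicker, derivation: use the ``uniform random edge'' description together with \Cref{rem:expectedtreesize}\eqref{r:expectedtreesize}, noting that $B_e$ is the size of the subtree hanging below $e$, and that conditioning on the non-root endpoint $w$ of a random edge, the descendant-count of $w$ behaves like the component size in a forest with one more root; combined with the fact that a uniformly random edge's lower endpoint $w$ is a uniformly random non-root vertex conditioned on having a parent, this gives $\mathbb{E}(B_e) \leq \frac{m}{t+1}$ directly.

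The main obstacle I anticipate is the bookkeeping in the enumeration step: correctly counting rooted forests in $\mathcal{F}(m,t)$ that contain a prescribed rooted path of a given length, being careful about which endpoint is allowed to coincide with a designated root $1,\dots,t$ and avoiding off-by-one errors in the exponents of \eqref{eq:numberofforests}. Getting the constant to come out as exactly $\frac{1}{t+1}$ rather than, say, $\frac{1}{t}$ will require the telescoping to be done precisely; this is presumably why the authors phrase it as an inequality with $t+1$ in the denominator (it is the ``one extra root'' phenomenon, analogous to \Cref{rem:expectedtreesize}\eqref{r:conditionalexpectedtreesize}), and I would lean on that heuristic — detaching the subtree below $e$ effectively adds its top vertex as a new root — to guide the exact computation.
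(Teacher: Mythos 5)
Your ``alternative, possibly slicker'' derivation at the end of the proposal is essentially the paper's proof, and it is the right route. The paper conditions on a fixed edge $e=vw$ (with $w$ the non-root endpoint), observes that forests $H\in\mathcal{F}(m,t)$ containing $vw$ are in bijection with forests $H'\in\mathcal{F}(m,t+1)$ in which $v$ and $w$ lie in \emph{different} trees (delete $vw$; $w$ becomes a fresh root), and that under this bijection $B_{vw}$ in $H$ equals $|V(T_w)|$ in $H'$. The one place your sketch is imprecise is exactly where the inequality comes from: after deleting $e$, the resulting forest is \emph{not} a uniform element of $\mathcal{F}(m,t+1)$ — it is conditioned on $v\notin V(T_w)$, which biases toward smaller $T_w$. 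So you cannot invoke \Cref{rem:expectedtreesize}\eqref{r:expectedtreesize} (the unconditional expectation, which would give equality $m/(t+1)$ under the wrong distribution); you need the conditional version \Cref{rem:expectedtreesize}\eqref{r:conditionalexpectedtreesize}, which is precisely what yields $\le m/(t+1)$. You do gesture at this in your closing paragraph (``analogous to \Cref{rem:expectedtreesize}\eqref{r:conditionalexpectedtreesize}''), but it deserves to be the load-bearing step rather than a heuristic aside; also, the parenthetical ``conditioned on having a parent'' is vacuous, since every non-root vertex of $F(m,t)$ has a parent, and the bijection between edges and non-root vertices is already exact. Your primary route — writing $\sum_e B_e=\sum_x \mathrm{dist}_F(x,\rho(x))$ and enumerating forests with a prescribed root-path — rests on a correct identity and would presumably get there, but it involves substantially more bookkeeping than the one-edge-deletion argument, and you yourself flag the obstacles without resolving them; I would drop it in favour of executing the slicker route carefully as above.
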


\begin{proof}
We will determine $\condexpect{B_e}{e=vw}$ for $v\in[m]$ and $w\in[m]\setminus[t]$ such that $v\neq w$. To simplify notation, we assume wlog that $w=t+1$. Then each forest $H\in\mathcal{F}(m,t)$ containing the edge $vw$ can be obtained by adding the edge $vw$ to a forest $H'\in\mathcal{F}(m,t+1)$ in which $v$ and $w$ are not in the same component. Then the bridge number $B_{vw}$ in $H$ equals the order of the tree of $H'$ containing $w$. Hence, letting $T_w$ be the component of $F(m,t+1)$ containing $w$ we obtain
\begin{align*}
\condexpect{B_e}{e=vw}=\condexpect{\order{T_w}}{v\notin V(T_w)}\leq \frac{m}{t+1},
\end{align*}
where the last inequality follows by \Cref{rem:expectedtreesize}\eqref{r:conditionalexpectedtreesize}. As this is true for all pairs $vw$ that can form an edge in $F(m,t)$, the statement follows.
\end{proof}

\Cref{rem:expectedtreesize}\eqref{r:expectedtreesize} identifies the expected order of a tree in $F(m,t)$ as $m/t$.
However, if we have two trees to choose from, perhaps surprisingly,
it is likely that one will be significantly smaller.
This is critical to point~\eqref{delete:foresttwo} of the proof strategy
described in \Cref{sec:outline},
and is quantified in the following proposition.

\begin{proposition}\label{prop:expmincomponentsforest}
Let $t=o(m)$ be such that $t=\omega(1)$, let $r\neq s$ be two distinct vertices from $[t]$ and let $T_r$ and $T_s$ be the components of $F(m,t)$ containing $r$ and $s$, respectively. Then we have
\[
\expect{\min\big\{\order{T_r},\order{T_s}\big\}} = o(m/t).
\]
\end{proposition}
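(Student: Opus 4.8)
## Proof proposal

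The plan is to condition on the order of $T_r$ and exploit a self-similarity of the random forest. If $\order{T_r}=b$, then — by the product structure underlying \eqref{eq:numberofforests} — the restriction of $F(m,t)$ to the remaining $m-b$ vertices is, conditionally, a uniformly random forest with $t-1$ trees in which $s$ and the other $t-2$ roots lie in distinct trees. Hence, conditioned on $\order{T_r}=b$, the order of $T_s$ has the same law as the order $Y_b$ of the component containing $s$ in $F(m-b,t-1)$, so
\[
\expect{\min\{\order{T_r},\order{T_s}\}} \;=\; \sum_{b}\prob{\order{T_r}=b}\,\expect{\min\{b,Y_b\}}.
\]
Now $\expect{\min\{b,Y_b\}}\le\min\{b,\expect{Y_b}\}$, and by \Cref{rem:expectedtreesize}\eqref{r:expectedtreesize} applied to $F(m-b,t-1)$ we have $\expect{Y_b}=\tfrac{m-b}{t-1}$. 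Since $\min\{b,\tfrac{m-b}{t-1}\}$ equals $b$ for $b\le m/t$, is at most $\tfrac{m}{t-1}$ for $m/t<b\le m/2$, and is at most $\tfrac{m}{2(t-1)}$ for $b>m/2$, we obtain
\[
\expect{\min\{\order{T_r},\order{T_s}\}} \;\le\; \sum_{b\le m/t} b\,\prob{\order{T_r}=b} \;+\; \frac{m}{t-1}\sum_{m/t<b\le m/2}\prob{\order{T_r}=b} \;+\; \frac{m}{2(t-1)}\,\prob{\order{T_r}>m/2}.
\]

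The third term is handled by Markov's inequality together with \Cref{rem:expectedtreesize}\eqref{r:expectedtreesize}: $\prob{\order{T_r}>m/2}\le 2\expect{\order{T_r}}/m = 2/t$, so this term is at most $\tfrac{m}{t(t-1)}=o(m/t)$, using $t=\omega(1)$. For the first two terms I would use a pointwise estimate on the law of $\order{T_r}$. Counting exactly as in the proof of \Cref{prop:unifedgedelete} — choose the $b$-set carrying $T_r$, a tree on it, and a forest on its complement — and applying \eqref{eq:numberofforests} gives
\[
\prob{\order{T_r}=b} \;=\; \frac{\binom{m-t}{b-1}\,b^{b-2}\,(t-1)(m-b)^{m-b-t}}{t\,m^{m-t-1}},
\]
and a careful application of Stirling's formula (in the spirit of \Cref{prop:unifedgedelete}) should yield an absolute constant $C$ with $\prob{\order{T_r}=b}\le C b^{-3/2}$ for all $1\le b\le m/2$. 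This Stirling estimate is the only real obstacle: the naive bound $\binom{m-t}{b-1}\le m^{b-1}/(b-1)!$ is too lossy and one must estimate the binomial coefficient itself, whose exponential correction factors are precisely what cancels the $e^{\Theta(b^2/m)}$-type terms coming from $(m-b)^{m-b-t}/m^{m-t-1}$.

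Granting this bound, the first term is at most $C\sum_{b\le m/t} b^{-1/2}=O(\sqrt{m/t})$, and the second is at most $\tfrac{m}{t-1}\cdot C\sum_{b>m/t} b^{-3/2}=O\!\left(\tfrac{m}{t-1}\sqrt{t/m}\right)=O(\sqrt{m/t})$. Combining the three estimates,
\[
\expect{\min\{\order{T_r},\order{T_s}\}} \;=\; O(\sqrt{m/t}) + o(m/t) \;=\; o(m/t),
\]
because $\sqrt{m/t}=o(m/t)$, since $t=o(m)$ forces $m/t\to\infty$. Everything except the uniform-in-$b$ Stirling estimate for $\prob{\order{T_r}=b}$ is routine bookkeeping, so that is where I would concentrate the effort.
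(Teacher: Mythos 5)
Your overall route is valid but genuinely different from the paper's, and the difference matters precisely at the spot you flagged as the remaining obstacle. You condition on $\order{T_r}=b$, use the self-similarity to write $\order{T_s}$ as a tree size in $F(m-b,t-1)$, bound $\expect{\min\{b,Y_b\}}$ by $\min\{b,\tfrac{m-b}{t-1}\}$, split the sum at $b=m/t$ and $b=m/2$, and then require the uniform local estimate $\prob{\order{T_r}=b}\le Cb^{-3/2}$ over the whole range $1\le b\le m/2$. That estimate is in fact true: a careful Stirling analysis shows $\prob{\order{T_r}=k}\sim\frac{m^{3/2}}{\sqrt{2\pi}\,k^{3/2}(m-k)^{3/2}}\exp\left(-\frac{t^2k}{2m(m-k)}\right)$ uniformly, and for $k\le m/2$ the prefactor $m^{3/2}/(m-k)^{3/2}$ is bounded and the exponential is at most one. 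But making this uniform requires some delicacy: the naive cubic error terms of order $t^3/m^2$ in $\log\bigl(1-t/m\bigr)^{m-t}$ are not individually $o(1)$ over the full allowed range $t=o(m)$, and one must exploit the cancellation against the matching cubic term from $(m-k-t)\log\frac{m-k}{m-k-t+1}$ (their difference is $O(t^3k/m^3)$, which the favourable term $-\tfrac{t^2k}{2m(m-k)}$ then dominates). Your write-up asserts the bound rather than proving it, so as it stands there is a gap exactly where you say there is.

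The paper's proof circumvents this entirely. It does not need any uniform local estimate; it only uses the pointwise convergence, for each fixed $k$, of $\prob{\order{T_r}=k}$ to the Borel$(1)$ mass $e^{-k}k^{k-1}/k!$, together with the fact that this limit is a proper probability distribution. This gives tightness, i.e.\ $\prob{\order{T_r}>h}=o(1)$ for any $h=\omega(1)$, by a soft argument that needs no uniformity in $k$. Setting $h=\sqrt{m/t}$ and observing that $\condexpect{\order{T_s}}{\order{T_r}=i}=\tfrac{m-i}{t-1}\le\tfrac{m}{t-1}$ then yields the bound $h+o(1)\cdot\tfrac{m}{t-1}=o(m/t)$. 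In exchange for needing the sharper uniform estimate, your version would give the quantitative bound $O(\sqrt{m/t})$ rather than the qualitative $o(m/t)$; the paper's version is cleaner and suffices for their application. If you want to complete your route, the missing piece is a rigorous, uniform-in-$b$ Stirling analysis of $\prob{\order{T_r}=b}$, taking care with the cancellation of cubic error terms described above.
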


We note that \Cref{prop:unifedgedelete} may be considered a
(slightly more precise) analogue of \Cref{prop:expmincomponentsforest}
in the case when $t=2$.

\begin{proof}
When we delete $T_r$ from $F(m,t)$ we obtain a random forest with $m-\order{T_r}$ vertices and $t-1$ trees. Hence, for each fixed constant $k\in\mathbb{N}$ we have
\begin{align*}
    \prob{\order{T_r}=k}
    & \stackrel{\phantom{\eqref{eq:numberofforests}}}{=} \binom{m-t}{k-1}k^{k-2}\frac{|\mathcal{F}(m-k,t-1)|}{|\mathcal{F}(m,t)|}\\
    & \stackrel{\eqref{eq:numberofforests}}{=} \binom{m-t}{k-1}k^{k-2}\frac{(t-1)(m-k)^{m-k-t}}{tm^{m-t-1}}\\
    & \stackrel{\phantom{\eqref{eq:numberofforests}}}{=} (1+o(1))\frac{m^{k-1}}{(k-1)!}\frac{k^{k-1}}{k}\frac{(1-k/m)^{m-k-t}}{m^{k-1}}\\
    & \stackrel{\phantom{\eqref{eq:numberofforests}}}{=} (1+o(1))\frac{e^{-k}k^{k-1}}{k!}.
\end{align*}
The term $\frac{e^{-k}k^{k-1}}{k!}$ is precisely the probability mass function of the
Borel distribution with parameter $1$, and so we have $\sum_{k\in\mathbb{N}}\left(e^{-k}k^{k-1}/k!\right)=1$, which implies that
$\lim_{k\to \infty} \prob{|V(T_r)|\ge k} = o(1)$, or in other words
\begin{align}\label{eq:forest2}
 \text{whp} \quad \order{T_r}\leq h
\end{align}
for any function $h=h(m)=\omega(1)$. We set $h=\sqrt{m/t}$ and consider the random variable $Z$ defined by
\[
Z:=
\begin{cases}
\order{T_r} & \mbox{if }\order{T_r}\leq h,\\
\order{T_s} & \mbox{otherwise.}
\end{cases}
\]
We note that
\begin{align}\label{eq:forest3}
\min\big\{\order{T_r},\order{T_s}\big\}\leq Z.
\end{align}
Furthermore, we have
\begin{align*}
    \expect{Z}&=\prob{\order{T_r}\leq h}\condexpect{Z}{\order{T_r}\leq h}+\sum_{i>h}\prob{\order{T_r}=i}\condexpect{Z}{\order{T_r}=i}
    \\
    &\leq 
    \prob{\order{T_r}\leq h}\cdot h+\prob{\order{T_r}> h}\frac{m}{t-1},
\end{align*}
where in the last inequality we used that $\condexpect{Z}{\order{T_r}=i}=\condexpect{\order{T_s}}{\order{T_r}=i}=(m-i)/(t-1)\leq m/(t-1)$ for each $i>h$ (since $r,s \in [t]$ and therefore $s \notin T_r$). This yields
\[
\expect{Z} \stackrel{\scriptsize \eqref{eq:forest2}}{\leq} 1\cdot h+o(1)\frac{m}{t-1}=o\left(\frac{m}{t}\right)
\]
Together with \eqref{eq:forest3} this shows the statement.
\end{proof}

We now aim to apply this proposition to show that, given the choice of two edges
in a random forest $F(m,t)$, the expectation of the smaller bridge number is
$o(m/t)$. The main difficulty is to handle the possibility
that the two edges might lie in the same tree.

\begin{proposition}\label{prop:expminbnforest}
Let $t=o(m)$ be such that $t=\omega(1)$
and let $e,e'$ be two edges of $F(m,t)$
selected uniformly at random and independently
subject to the condition that $e\neq e'$. Then we have
\[\mathbb{E}\left(\min\big\{B_e,B_{e'}\big\}\right) = o(m/t).\]
\end{proposition}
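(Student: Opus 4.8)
The plan is to split into two cases according to whether $e$ and $e'$ lie in the same tree of $F(m,t)$ or in different trees, and to handle the diagonal case by a symmetry/conditioning argument. Since $\min\{B_e,B_{e'}\}\le B_e$, \Cref{claim:expbridgenumberforest} already gives the crude bound $\mathbb{E}(\min\{B_e,B_{e'}\})\le \frac{m}{t+1}=O(m/t)$, which is exactly the wrong order by a constant; so we must genuinely exploit that we have two edges to choose from. The first step is therefore to observe that, conditioned on the unordered pair $\{e,e'\}$ lying in two \emph{different} trees $T_r$ and $T_s$ (with roots $r\neq s\in[t]$), the bridge numbers satisfy $B_e\le \order{T_r}$ and $B_{e'}\le \order{T_s}$, so $\min\{B_e,B_{e'}\}\le\min\{\order{T_r},\order{T_s}\}$. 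Then conditioning further on which two trees the edges fall into and taking expectations, \Cref{prop:expmincomponentsforest} (applied to the forest $F(m,t)$, whose two marked trees containing the chosen edges are distributed correctly once we also average over the random roots $r,s$) yields a contribution of $o(m/t)$ from this case. I would need to be slightly careful that conditioning on ``edge $e$ lies in the tree rooted at $r$'' does not distort the distribution of that tree in a way that breaks the bound of \Cref{prop:expmincomponentsforest}; the cleanest route is to note that, unconditionally, the two trees containing two random distinct edges behave no worse than two fixed rooted trees $T_r,T_s$ with $r\neq s$, because an edge is more likely to be chosen from a larger tree, which only biases $\order{T_r},\order{T_s}$ \emph{upward} in a controlled way — but since we already have $\order{T_r}\le h=\sqrt{m/t}$ whp from \eqref{eq:forest2}, a union bound over the (at most $\binom{t}{2}$) tree pairs still gives whp that both relevant trees have order $O(\sqrt{m/t})$, hence an $o(m/t)$ bound for this case directly.

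The second step is the same-tree case, which I expect to be the main obstacle. Here $e$ and $e'$ lie in a common tree $T$ of $F(m,t)$, and we want $\mathbb{E}(\min\{B_e,B_{e'}\}\mathbbm{1}[e,e'\in T])=o(m/t)$. The key structural fact is that the order of the tree containing a uniformly random edge is typically $O(\sqrt{m/t})$: indeed, by \eqref{eq:forest2} the tree $T_r$ containing a fixed root $r$ has order $O(\sqrt{m/t})$ whp, and the probability that both random edges land in one particular tree of order $s$ is roughly $(s/(m-t))^2$, so summing the size-biased contribution over trees shows that the expected size of the common tree, when it exists, is small. Concretely, I would condition on the degree sequence / tree-size profile of $F(m,t)$, bound $\prob{e,e'\in T}$ for a tree $T$ with $|E(T)|=|V(T)|-1$ edges by $\big(\tfrac{|V(T)|}{m-t}\big)^2$, and use that $\min\{B_e,B_{e'}\}\le |V(T)|$ trivially, giving a bound of order $\frac{1}{(m-t)^2}\,\mathbb{E}\big(\sum_{T} |V(T)|^3\big)$. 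It then suffices to show $\mathbb{E}\big(\sum_T |V(T)|^3\big)=o\big(m^2/t\big)\cdot O(m/t)$, i.e. that the third moment of the tree-size distribution in $F(m,t)$ is $o(m^3/t^3)$ on average per tree; equivalently, that $\mathbb{E}(\order{T_r}^3)=o((m/t)^3)$ for a fixed root, which follows from the explicit computation $\prob{\order{T_r}=k}=(1+o(1))e^{-k}k^{k-1}/k!$ for fixed $k$ together with the whp bound $\order{T_r}\le h$ and a tail estimate showing the contribution of $k>h$ is negligible.

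Let me restate the logical skeleton, since the same-tree bound is the delicate part. Write $A$ for the event $\{e,e'$ in the same tree$\}$ and $A^c$ for its complement. On $A^c$, use $\min\{B_e,B_{e'}\}\le\min\{\order{T},\order{T'}\}$ where $T,T'$ are the two distinct trees; conditioning on the (unordered) pair of host trees and applying \Cref{prop:expmincomponentsforest} — or more simply the whp size bound \eqref{eq:forest2} extended by a union bound to all $t$ trees, plus the trivial bound $\order{T}\le m$ on the exceptional event, which has probability $o(1/t)$ per tree — gives $\mathbb{E}(\min\{B_e,B_{e'}\}\mathbbm{1}_{A^c})=o(m/t)$. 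On $A$, bound $\min\{B_e,B_{e'}\}\le \order{T}$ where $T$ is the host tree, and estimate $\mathbb{E}(\order{T}\mathbbm{1}_A)$ by first revealing $F(m,t)$, then noting the conditional probability that both edges land in a given tree of order $s$ is at most $\big(s/(m-t)\big)^2$, so $\mathbb{E}(\order{T}\mathbbm{1}_A\mid F(m,t))\le (m-t)^{-2}\sum_{\text{trees}} s^3$. Taking expectations over $F(m,t)$ reduces everything to controlling $\mathbb{E}\big(\sum_{\text{trees}} |V(\text{tree})|^3\big)$, which by symmetry is $\le \frac{m}{t}\,\mathbb{E}(\order{T_r}^3)$-type quantity; the explicit pmf and tail bound from the proof of \Cref{prop:expmincomponentsforest} then give $\mathbb{E}(\order{T_r}^3)=O(1)$ (a constant, in fact), so $\mathbb{E}(\order{T}\mathbbm{1}_A)\le O(m)\cdot O(1)/(m-t)^2 = O(1/m) = o(m/t)$ comfortably. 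Combining the two cases completes the proof. The main obstacle, as indicated, is getting clean enough moment control on the tree-size distribution to kill the same-tree case — in particular making rigorous the informal claim $\prob{\order{T_r}=k}\approx e^{-k}k^{k-1}/k!$ uniformly enough (not just for fixed $k$) to bound a third moment, which is why I would lean on the whp truncation at $h=\sqrt{m/t}$ together with a rough tail bound rather than trying to estimate the pmf for large $k$.
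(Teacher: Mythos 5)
Your proposal has a genuine gap, and it is in the same-tree case. You claim $\expect{\order{T_r}^3}=O(1)$, or at the very least $o((m/t)^3)$, but this is impossible: by Jensen's inequality and \Cref{rem:expectedtreesize}\eqref{r:expectedtreesize} we have $\expect{\order{T_r}^3}\ge\big(\expect{\order{T_r}}\big)^3=(m/t)^3\to\infty$. The approximation $\prob{\order{T_r}=k}=(1+o(1))e^{-k}k^{k-1}/k!$ holds only for \emph{fixed} $k$; the Borel$(1)$ distribution has tails $\sim k^{-3/2}$ and infinite mean, so almost all of every moment of $\order{T_r}$ comes from the range $k$ up to the effective cutoff $\Theta\big((m/t)^2\big)$, where the fixed-$k$ asymptotics say nothing. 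A correct estimate gives $\expect{\order{T_r}^3}=\Theta\big((m/t)^5\big)$, and plugging this into your size-biasing bound yields $\expect{\min\{B_e,B_{e'}\}\mathbbm{1}_A}=O\big(m^3/t^4\big)$, which is $o(m/t)$ only when $t=\omega\big(m^{2/3}\big)$. That is far from covering $\omega(1)=t=o(m)$, and it also fails in part of the regime actually used in the proof of \Cref{thm:giantrainbowtree}\eqref{i:sup}, where $t=\Theta(\eps m)$ and one can have $\eps^4 n\to 0$ while still $\eps^3 n\to\infty$. The different-tree case also has unresolved issues: conditioning on where the two edges land size-biases the host trees, so \Cref{prop:expmincomponentsforest} does not apply directly, and your "upward bias" remark cuts the wrong way since you want an upper bound on the minimum. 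The backup union bound $\prob{\order{T_r}>h}=o(1/t)$ with $h=\sqrt{m/t}$ likewise fails in general, since the $k^{-3/2}$ tail gives $\prob{\order{T_r}>h}\approx h^{-1/2}=(t/m)^{1/4}$, which is $o(1/t)$ only for $t=o\big(m^{1/5}\big)$ — a range disjoint from where your same-tree bound works.

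The paper avoids both difficulties by not sampling two random edges at all. Using the bijection between edges and non-root vertices to reduce to $e(t+1),e(t+2)$, it constructs $F(m,t)$ by attaching $t+1$ and $t+2$ to a random $F(m,t+2)$ via two independent uniform vertices $u,v$. After this exchange of randomness the two distinguished bridge numbers become, up to low-probability exceptional events, literally $\order{T_{t+1}}$ and $\order{T_{t+2}}$ in the \emph{unbiased} forest $F(m,t+2)$, so \Cref{prop:expmincomponentsforest} applies directly and the same-tree case never arises. That construction is the missing idea.
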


\begin{proof}
We observe that for each vertex $w\in[m]\setminus[t]$ there is a unique edge $e(w)=vw$ in $F(m,t)$ in which $w$ is further away than $v$ from the root of the tree component containing $vw$. Hence, due to symmetry we have 
\begin{align}\label{eq:wlog}
 \expect{\min\big\{B_e,B_{e'}\big\}}=\expect{\min\left\{B_{e(t+1)},B_{e(t+2)}\right\}}.
\end{align}

To determine the right-hand side in \eqref{eq:wlog}, we consider the following construction. We pick, uniformly at random and independently of each other,
a forest $F=F(m,t+2)$ from $\mathcal{F}(m,t+2)$,
a vertex $u\in[m]$, and another vertex $v\in[m]$.
Then we let $F'$ be the (multi-)graph obtained from $F$ by adding an edge between $u$ and $t+1$ and an edge between $v$ and $t+2$ (see \Cref{fig:minBridgeNumber}).

\newcommand{\lw}{1pt}
\newcommand{\vs}{2.8pt}
\newcommand{\shiftA}{-0.15}
\newcommand{\shiftB}{-0.19}
\newcommand{\shiftC}{0.1}
\newcommand{\shiftD}{0.02}
\begin{figure}[h]
    \centering
    \begin{tikzpicture}[line cap=round,line join=round,>=triangle 45,x=1cm,y=1cm]
\draw [line width=\lw] (0,0)-- (-0.6,0.6);
\draw [line width=\lw] (0,0)-- (0.6,0.8);
\draw [line width=\lw] (0,0)-- (0,0.8);
\draw [line width=\lw] (0,0.8)-- (-0.5,1.3);
\draw [line width=\lw] (0.6,0.8)-- (0.7,1.4);
\draw [line width=\lw] (0.6,0.8)-- (1.13,1.2);
\draw [line width=\lw] (0,2.2)-- (0,2.8);
\draw [line width=\lw] (0,2.8)-- (-0.3,3.3);
\draw [line width=\lw] (0,2.8)-- (0.3,3.3);
\draw [line width=2pt,dash pattern=on 0.3pt off 3pt] (0,0.8)-- (0,2.2);
\draw [rotate around={89.78127510348422:(0.0025268716333589993,2.8619199687825794)},line width=1.8pt] (0.0025268716333589993,2.8619199687825794) ellipse (0.8950219865802cm and 0.6024283578223942cm);
\draw [line width=\lw] (2.7+\shiftA,0)-- (2.7+\shiftA,0.6);
\draw [line width=\lw] (2.7+\shiftA,0.6)-- (2.1+\shiftA,1.2);
\draw [line width=\lw] (2.7+\shiftA,0.6)-- (3.2+\shiftA,1);
\draw [line width=\lw] (3.2+\shiftA,1)-- (3.6+\shiftA,1.5);
\draw [line width=\lw] (3.2+\shiftA,1)-- (3.8+\shiftA,1.1);
\draw [line width=\lw] (2.6+\shiftA,2.2)-- (2.7+\shiftA,3);
\draw [line width=\lw] (2.6+\shiftA,2.2)-- (2.2+\shiftA,2.9);
\draw [line width=\lw] (2.6+\shiftA,2.2)-- (3.2+\shiftA,3);
\draw [line width=\lw] (3.2+\shiftA,3)-- (3.3+\shiftA,3.5);
\draw [line width=\lw] (3.2+\shiftA,3)-- (3.6+\shiftA,3.4);
\draw [line width=\lw] (2.2+\shiftA,2.9)-- (1.8+\shiftA,3.6);
\draw [line width=\lw] (2.2+\shiftA,2.9)-- (2.2+\shiftA,3.6);
\draw [line width=\lw] (2.2+\shiftA,2.9)-- (2.6+\shiftA,3.5);
\draw [line width=2pt,dash pattern=on 0.3pt off 3pt] (2.7+\shiftA,0.6)-- (2.6+\shiftA,2.2);
\draw [rotate around={0:(2.6528248723773826+\shiftA,3.1)},line width=0.6pt,dotted] (2.6528248723773826+\shiftA,3.1) ellipse (1.2326334477367022cm and 1.1860623857953614cm);
\draw (-0.2,-0.22) node[anchor=north west] {neither $A$ nor $B$ hold};
\draw [line width=\lw] (6.1+\shiftB,0)-- (5.7+\shiftB,0.4);
\draw [line width=\lw] (6.1+\shiftB,0)-- (6.1+\shiftB,0.6);
\draw [line width=\lw] (6.1+\shiftB,0)-- (6.6+\shiftB,0.3);
\draw [line width=\lw] (5.7+\shiftB,0.4)-- (5.4+\shiftB,0.9);
\draw [line width=\lw] (6.6+\shiftB,0.3)-- (7+\shiftB,0.7);
\draw [line width=\lw] (7.9+\shiftB,0)-- (7.5+\shiftB,0.5);
\draw [line width=\lw] (7.9+\shiftB,0)-- (8.1+\shiftB,0.5);
\draw [line width=\lw] (8.1+\shiftB,0.5)-- (7.7+\shiftB,0.9);
\draw [line width=\lw] (8.1+\shiftB,0.5)-- (8.3+\shiftB,1);
\draw [line width=\lw] (8.3+\shiftB,1)-- (8.1+\shiftB,1.5);
\draw [line width=\lw] (6.2+\shiftB,1.3)-- (5.9+\shiftB,1.8);
\draw [line width=\lw] (6.2+\shiftB,1.3)-- (6.25+\shiftB,1.9);
\draw [line width=\lw] (6.2+\shiftB,1.3)-- (6.6+\shiftB,1.8);
\draw [line width=\lw] (6.6+\shiftB,1.8)-- (6.6+\shiftB,2.3);
\draw [line width=\lw] (6.6+\shiftB,1.8)-- (7+\shiftB,2.1);
\draw [line width=\lw] (7+\shiftB,2.1)-- (7.2+\shiftB,2.6);
\draw [line width=\lw] (7+\shiftB,2.1)-- (7.5+\shiftB,2.2);
\draw [line width=\lw] (6.6+\shiftB,2.3)-- (6.7+\shiftB,2.7);
\draw [line width=\lw] (5.9+\shiftB,2.7)-- (6+\shiftB,3.3);
\draw [line width=\lw] (5.9+\shiftB,2.7)-- (5.5+\shiftB,3.1);
\draw [line width=\lw] (6+\shiftB,3.3)-- (5.8+\shiftB,3.7);
\draw [line width=\lw] (6+\shiftB,3.3)-- (6.3+\shiftB,3.7);
\draw [line width=2pt,dash pattern=on 0.3pt off 3pt] (6.6+\shiftB,0.3)-- (6.2+\shiftB,1.3);
\draw [line width=2pt,dash pattern=on 0.3pt off 3pt](5.9+\shiftB,1.8)-- (5.9+\shiftB,2.7);
\draw [rotate around={-66.57130719125621:(5.9015771247123405+\shiftB,3.273436773025351)},line width=1.8pt] (5.9015771247123405+\shiftB,3.273436773025351) ellipse (0.78cm and 0.78cm);
\draw [rotate around={-61.27386933469538:(6.203999095033547+\shiftB,2.675353598209841)},line width=0.6pt,dotted] (6.203999095033547+\shiftB,2.675353598209841) ellipse (1.670928757124346cm and 1.479270662090687cm);
\draw (5.56,-0.22) node[anchor=north west] {\parbox{1.645638780672247 cm}{ $A$ holds}};
\draw [line width=\lw] (10.1+\shiftC,0)-- (9.7+\shiftC,0.3);
\draw [line width=\lw] (10.1+\shiftC,0)-- (10.1+\shiftC,0.6);
\draw [line width=\lw] (10.1+\shiftC,0)-- (10.5+\shiftC,0.2);
\draw [line width=\lw] (10.5+\shiftC,0.2)-- (10.8+\shiftC,0.7);
\draw [line width=\lw] (10.5+\shiftC,0.2)-- (11+\shiftC,0.3);
\draw [line width=\lw] (10.2+\shiftC,1.1)-- (9.8+\shiftC,1.5);
\draw [line width=\lw] (10.2+\shiftC,1.1)-- (10.1+\shiftC,1.8);
\draw [line width=\lw] (10.2+\shiftC,1.1)-- (10.5+\shiftC,1.7);
\draw [line width=\lw] (10.5+\shiftC,1.7)-- (10.8+\shiftC,2.1);
\draw [line width=\lw] (10.8+\shiftC,2.1)-- (11.4+\shiftC,2.3);
\draw [line width=\lw] (10.8+\shiftC,2.1)-- (11.2+\shiftC,2.6);
\draw [line width=\lw] (10.8+\shiftC,2.1)-- (10.8+\shiftC,2.6);
\draw [line width=2pt,dash pattern=on 0.3pt off 3pt] (10.8+\shiftC,2.6)-- (9.8+\shiftC,2.9);
\draw [line width=\lw] (9.8+\shiftC,2.9)-- (9.4+\shiftC,3.2);
\draw [line width=\lw] (9.8+\shiftC,2.9)-- (9.7+\shiftC,3.4);
\draw [line width=\lw] (9.8+\shiftC,2.9)-- (10.1+\shiftC,3.3);
\draw [line width=\lw] (9.8+\shiftC,2.9)-- (10.3+\shiftC,3);
\draw [line width=\lw] (10.3+\shiftC,3)-- (10.5+\shiftC,3.4);
\draw [line width=\lw] (10.1+\shiftC,3.3)-- (9.9+\shiftC,3.8);
\draw [line width=\lw] (10.1+\shiftC,3.3)-- (10.3+\shiftC,3.7);
\draw [line width=\lw] (11.8+\shiftC,0)-- (11.8+\shiftC,0.5);
\draw [line width=\lw] (11.8+\shiftC,0.5)-- (11.6+\shiftC,0.9);
\draw [line width=\lw] (11.8+\shiftC,0.5)-- (12+\shiftC,0.9);
\draw [line width=2pt,dash pattern=on 0.3pt off 3pt](10.5+\shiftC,0.2)-- (10.2+\shiftC,1.1);
\draw [rotate around={-2.121096396661424:(9.93+\shiftC,3.27)},line width=1.8pt] (9.93+\shiftC,3.27) ellipse (0.8cm and 0.74cm);
\draw [rotate around={78.36636600105876:(10.325101290647417+\shiftC,2.6622499346307817)},line width=0.6pt,dotted] (10.325101290647417+\shiftC,2.6622499346307817) ellipse (1.7745293135233593cm and 1.5235916015923434cm);
\draw (9.83,-0.22) node[anchor=north west] {\parbox{1.645638780672247 cm}{$B$ holds}};
\draw [line width=\lw] (13.7+\shiftD,0)-- (13.9+\shiftD,0.4);
\draw [line width=\lw] (13.9+\shiftD,0.4)-- (13.9+\shiftD,0.9);
\draw [line width=\lw] (13.7+\shiftD,0)-- (14.5+\shiftD,0.4);
\draw [line width=\lw] (13.9+\shiftD,0.4)-- (14.3+\shiftD,0.7);
\draw [line width=\lw] (14.5+\shiftD,0.4)-- (14.6+\shiftD,1);
\draw [line width=\lw] (14.5+\shiftD,0.4)-- (14.9+\shiftD,0.8);
\draw [line width=\lw] (14.5+\shiftD,0.4)-- (15+\shiftD,0.4);
\draw [line width=\lw] (14.9+\shiftD,0.8)-- (14.9+\shiftD,1.2);
\draw [line width=\lw] (14.9+\shiftD,0.8)-- (15.3+\shiftD,1);
\draw [line width=2pt,dash pattern=on 0.3pt off 3pt] (14.6+\shiftD,1)-- (14+\shiftD,1.6);
\draw [line width=\lw] (14+\shiftD,1.6)-- (13.8+\shiftD,2.1);
\draw [line width=\lw] (14+\shiftD,1.6)-- (14.2+\shiftD,2.2);
\draw [line width=\lw] (14+\shiftD,1.6)-- (14.5+\shiftD,1.9);
\draw [line width=\lw] (14.2+\shiftD,2.2)-- (14.1+\shiftD,2.7);
\draw [line width=\lw] (14.2+\shiftD,2.2)-- (14.4+\shiftD,2.7);
\draw [line width=\lw] (14.2+\shiftD,2.2)-- (14.6+\shiftD,2.5);
\draw [line width=\lw] (13.8+\shiftD,2.1)-- (13.2+\shiftD,2.4);
\draw [line width=\lw] (13.2+\shiftD,2.4)-- (13.3+\shiftD,2.9);
\draw [line width=\lw] (13.8+\shiftD,2.1)-- (13.7+\shiftD,2.7);
\draw [line width=2pt,dash pattern=on 0.3pt off 3pt] (13.2+\shiftD,2.4)-- (13.7+\shiftD,0);
\draw [line width=\lw] (14+\shiftD,3.2)-- (13.4+\shiftD,3.5);
\draw [line width=\lw] (14+\shiftD,3.2)-- (14+\shiftD,3.7);
\draw [line width=\lw] (14+\shiftD,3.7)-- (13.6+\shiftD,3.9);
\draw [line width=\lw] (14.6+\shiftD,3.3)-- (14.6+\shiftD,3.9);
\draw (13.1,-0.22) node[anchor=north west] {$A$ and $B$ hold};
\draw [line width=0.5pt,dash pattern=on 1pt off 2pt] (4.13,4.4)-- (4.13,-0.8);
\draw [line width=0.5pt,dash pattern=on 1pt off 2pt] (8.52,4.4)-- (8.52,-0.8);
\draw [line width=0.5pt,dash pattern=on 1pt off 2pt] (12.6,4.4)-- (12.6,-0.8);
\begin{scriptsize}
\draw [fill=black] (-0.1,-0.1) rectangle (0.1, 0.1);
\draw [fill=black] (-0.6,0.6) circle (\vs);
\draw [fill=black] (0,0.8) circle (\vs);
\draw [color=black] (0.19,0.8) node {\tiny $u$};
\draw [fill=black] (0.6,0.8) circle (\vs);
\draw [fill=black] (0.7,1.4) circle (\vs);
\draw [fill=black] (1.13,1.2) circle (\vs);
\draw [fill=black] (-0.5,1.3) circle (\vs);
\draw [fill=black] (-0.1,2.1) rectangle (0.1,2.3);
\draw [color=black] (0.24,2.4) node {\tiny $t+1$};
\draw [fill=black] (0,2.8) circle (\vs);
\draw [fill=black] (-0.3,3.3) circle (\vs);
\draw [fill=black] (0.3,3.3) circle (\vs);
\draw [fill=black] (2.6+\shiftA,-0.1) rectangle (2.8+\shiftA,0.1);
\draw [fill=black] (2.7+\shiftA,0.6) circle (\vs);
\draw [color=black] (2.89+\shiftA,0.56) node {\tiny $v$};
\draw [fill=black] (2.1+\shiftA,1.2) circle (\vs);
\draw [fill=black] (3.2+\shiftA,1) circle (\vs);
\draw [fill=black] (3.6+\shiftA,1.5) circle (\vs);
\draw [fill=black] (3.8+\shiftA,1.1) circle (\vs);
\draw [fill=black] (2.5+\shiftA,2.1) rectangle (2.7+\shiftA,2.3);
\draw [color=black] (2.92+\shiftA,2.22) node {\tiny $t+2$};
\draw [fill=black] (2.2+\shiftA,2.9) circle (\vs);
\draw [fill=black] (2.7+\shiftA,3) circle (\vs);
\draw [fill=black] (3.2+\shiftA,3) circle (\vs);
\draw [fill=black] (3.3+\shiftA,3.5) circle (\vs);
\draw [fill=black] (3.6+\shiftA,3.4) circle (\vs);
\draw [fill=black] (2.6+\shiftA,3.5) circle (\vs);
\draw [fill=black] (1.8+\shiftA,3.6) circle (\vs);
\draw [fill=black] (2.2+\shiftA,3.6) circle (\vs);
\draw [fill=black] (6+\shiftB,-0.1) rectangle (6.2+\shiftB, 0.1);
\draw [fill=black] (6.1+\shiftB,0.6) circle (\vs);
\draw [fill=black] (5.7+\shiftB,0.4) circle (\vs);
\draw [fill=black] (6.6+\shiftB,0.3) circle (\vs);
\draw [color=black] (6.79+\shiftB,0.3) node {\tiny $v$};
\draw [fill=black] (5.4+\shiftB,0.9) circle (\vs);
\draw [fill=black] (7+\shiftB,0.7) circle (\vs);
\draw [fill=black] (6.1+\shiftB,1.2) rectangle (6.3+\shiftB,1.4);
\draw [color=black] (6.55+\shiftB,1.3) node {\tiny $t+2$};
\draw [fill=black] (5.9+\shiftB,1.8) circle (\vs);
\draw [color=black] (5.68+\shiftB,1.8) node {\tiny $u$};
\draw [fill=black] (6.25+\shiftB,1.9) circle (\vs);
\draw [fill=black] (6.6+\shiftB,1.8) circle (\vs);
\draw [fill=black] (6.6+\shiftB,2.3) circle (\vs);
\draw [fill=black] (7+\shiftB,2.1) circle (\vs);
\draw [fill=black] (7.2+\shiftB,2.6) circle (\vs);
\draw [fill=black] (7.5+\shiftB,2.2) circle (\vs);
\draw [fill=black] (6.7+\shiftB,2.7) circle (\vs);
\draw [fill=black] (5.8+\shiftB,2.6) rectangle (6+\shiftB,2.8);
\draw [color=black] (6.2+\shiftB,2.89) node {\tiny $t+1$};
\draw [fill=black] (5.5+\shiftB,3.1) circle (\vs);
\draw [fill=black] (6+\shiftB,3.3) circle (\vs);
\draw [fill=black] (6.3+\shiftB,3.7) circle (\vs);
\draw [fill=black] (5.8+\shiftB,3.7) circle (\vs);
\draw [fill=black] (7.8+\shiftB,-0.1) rectangle (8+\shiftB,0.1);
\draw [fill=black] (7.5+\shiftB,0.5) circle (\vs);
\draw [fill=black] (8.1+\shiftB,0.5) circle (\vs);
\draw [fill=black] (7.7+\shiftB,0.9) circle (\vs);
\draw [fill=black] (8.3+\shiftB,1) circle (\vs);
\draw [fill=black] (8.1+\shiftB,1.5) circle (\vs);
\draw [fill=black] (10+\shiftC,-0.1) rectangle (10.2+\shiftC,0.1);
\draw [fill=black] (9.7+\shiftC,0.3) circle (\vs);
\draw [fill=black] (10.1+\shiftC,0.6) circle (\vs);
\draw [fill=black] (10.5+\shiftC,0.2) circle (\vs);
\draw [color=black] (10.66+\shiftC,0.07) node {\tiny $u$};
\draw [fill=black] (10.8+\shiftC,0.7) circle (\vs);
\draw [fill=black] (11+\shiftC,0.3) circle (\vs);
\draw [fill=black] (10.1+\shiftC,1) rectangle (10.3+\shiftC,1.2);
\draw [color=black] (10.52+\shiftC,1.12) node {\tiny $t+1$};
\draw [fill=black] (9.8+\shiftC,1.5) circle (\vs);
\draw [fill=black] (10.1+\shiftC,1.8) circle (\vs);
\draw [fill=black] (10.5+\shiftC,1.7) circle (\vs);
\draw [fill=black] (10.8+\shiftC,2.6) circle (\vs);
\draw [color=black] (10.59+\shiftC,2.5) node {\tiny $v$};
\draw [fill=black] (10.8+\shiftC,2.1) circle (\vs);
\draw [fill=black] (11.2+\shiftC,2.6) circle (\vs);
\draw [fill=black] (11.4+\shiftC,2.3) circle (\vs);
\draw [fill=black] (9.7+\shiftC,2.8) rectangle (9.9+\shiftC,3);
\draw [color=black] (9.82+\shiftC,2.71) node {\tiny $t+2$};
\draw [fill=black] (9.4+\shiftC,3.2) circle (\vs);
\draw [fill=black] (9.7+\shiftC,3.4) circle (\vs);
\draw [fill=black] (10.1+\shiftC,3.3) circle (\vs);
\draw [fill=black] (10.3+\shiftC,3) circle (\vs);
\draw [fill=black] (9.9+\shiftC,3.8) circle (\vs);
\draw [fill=black] (10.3+\shiftC,3.7) circle (\vs);
\draw [fill=black] (10.5+\shiftC,3.4) circle (\vs);
\draw [fill=black] (11.7+\shiftC,-0.1) rectangle (11.9+\shiftC,0.1);
\draw [fill=black] (11.8+\shiftC,0.5) circle (\vs);
\draw [fill=black] (11.6+\shiftC,0.9) circle (\vs);
\draw [fill=black] (12+\shiftC,0.9) circle (\vs);
\draw [fill=black] (13.6+\shiftD,-0.1) rectangle (13.8+\shiftD,0.1);
\draw [color=black] (13.35+\shiftD,0) node {\tiny $t+1$};
\draw [fill=black] (13.9+\shiftD,0.4) circle (\vs);
\draw [fill=black] (13.9+\shiftD,0.9) circle (\vs);
\draw [fill=black] (14.3+\shiftD,0.7) circle (\vs);
\draw [fill=black] (14.5+\shiftD,0.4) circle (\vs);
\draw [fill=black] (14.6+\shiftD,1) circle (\vs);
\draw [color=black] (14.38+\shiftD,1) node {\tiny $v$};
\draw [fill=black] (14.9+\shiftD,0.8) circle (\vs);
\draw [fill=black] (15+\shiftD,0.4) circle (\vs);
\draw [fill=black] (14.9+\shiftD,1.2) circle (\vs);
\draw [fill=black] (15.3+\shiftD,1) circle (\vs);
\draw [fill=black] (13.9+\shiftD,1.5) rectangle (14.1+\shiftD,1.7);
\draw [color=black] (13.85+\shiftD,1.38) node {\tiny $t+2$};
\draw [fill=black] (13.8+\shiftD,2.1) circle (\vs);
\draw [fill=black] (14.2+\shiftD,2.2) circle (\vs);
\draw [fill=black] (14.5+\shiftD,1.9) circle (\vs);
\draw [fill=black] (14.1+\shiftD,2.7) circle (\vs);
\draw [fill=black] (14.4+\shiftD,2.7) circle (\vs);
\draw [fill=black] (14.6+\shiftD,2.5) circle (\vs);
\draw [fill=black] (13.2+\shiftD,2.4) circle (\vs);
\draw [color=black] (12.98+\shiftD,2.4) node {\tiny $u$};
\draw [fill=black] (13.7+\shiftD,2.7) circle (\vs);
\draw [fill=black] (13.3+\shiftD,2.9) circle (\vs);
\draw [fill=black] (13.4+\shiftD,3.5) circle (\vs);
\draw [fill=black] (13.9+\shiftD,3.1) rectangle (14.1+\shiftD,3.3);
\draw [fill=black] (14+\shiftD,3.7) circle (\vs);
\draw [fill=black] (13.6+\shiftD,3.9) circle (\vs);
\draw [fill=black] (14.6+\shiftD,3.9) circle (\vs);
\draw [fill=black] (14.5+\shiftD,3.2) rectangle (14.7+\shiftD,3.4);

\draw [color=black] (0,3.965) node {\tiny $B_{e(t+1)}$};
\draw [color=black] (2.7+\shiftA,4.07) node {\tiny $B_{e(t+2)}$};
\draw [color=black] (7.05+\shiftB,3.04) node {\tiny $B_{e(t+1)}$};
\draw [color=black] (7.53+\shiftB,3.93) node {\tiny $B_{e(t+2)}$};
\draw [color=black] (11.7+\shiftC,4.15) node {\tiny $B_{e(t+1)}$};
\draw [color=black] (11.13+\shiftC,3.32) node {\tiny $B_{e(t+2)}$};
\end{scriptsize}
\end{tikzpicture}
        \caption{Construction of the (multi-)graph $F'$ by adding edges (dotted lines) between $u$ and $t+1$ and between $v$ and $t+2$ in the random forest $F=F(m,t+2)$. In the first three pictures we have $F'\in \mathcal{F}(m,t)$ and the number of vertices in the solid bubble is $\min\left\{B_{e(t+1)},B_{e(t+2)}\right\}$. The last picture is an example in which $F'\notin \mathcal{F}(m,t)$.}
    \label{fig:minBridgeNumber}
\end{figure}
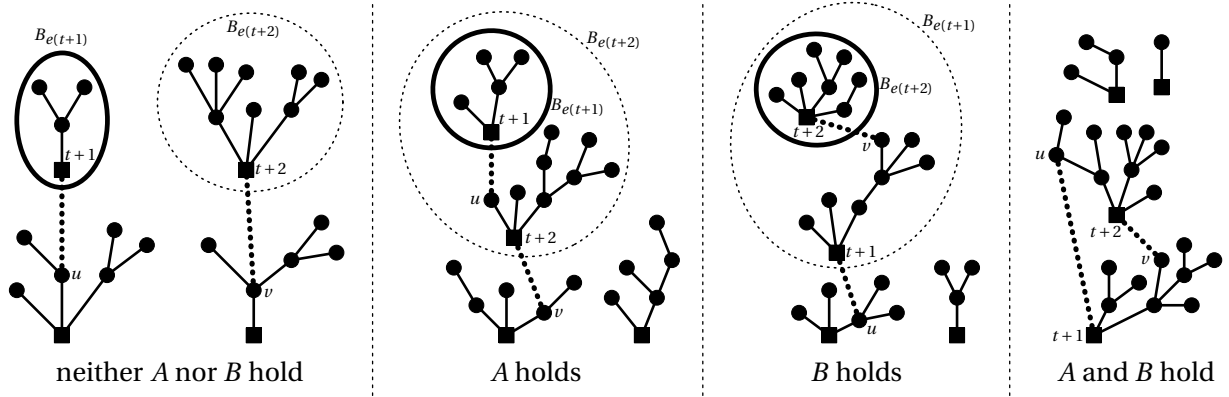

We note that for each forest $H\in \mathcal{F}(m,t)$ there is exactly one choice of $F$, $u$ and $v$ such that $F'=H$. Hence, conditioned on the event that $F'\in \mathcal{F}(m,t)$ we have that $F'$ is distributed as $F(m,t)$. Furthermore, we have
\begin{align}\label{eq:probCondition}
    \prob{F'\in \mathcal{F}(m,t)}=\frac{\left|\mathcal{F}(m,t)\right|}{\left|\mathcal{F}(m,t+2)\right|\cdot m\cdot m}
    \stackrel{\eqref{eq:numberofforests}}{=}
    \frac{t}{t+2}=1+o(1).
\end{align}
Next, we aim to relate the bridge numbers $B_{e(t+1)}$ and $B_{e(t+2)}$ to the above construction. To that end, we denote by $T_{t+1}$ and $T_{t+2}$ the tree components in $F$ containing $t+1$ and $t+2$, respectively. Furthermore, we denote by $A$ and $B$ the events that $u\in V\left(T_{t+2}\right)$ and $v\in V\left(T_{t+1}\right)$, respectively. We observe that if $F'\in \mathcal{F}(m,t)$, then $A$ and $B$ cannot hold simultaneously. Furthermore, conditioned on $F'\in \mathcal{F}(m,t)$ we have for the bridge numbers $B_{e(t+1)}$ and $B_{e(t+2)}$ in $F'$ and therefore also in $F(m,t)$ (see also \Cref{fig:minBridgeNumber})
\begin{align}\label{eq:bridgeNumber}
    M:=\min\left\{B_{e(t+1)},B_{e(t+2)}\right\}=
    \begin{cases}
    \min\left\{\order{T_{t+1}}, \order{T_{t+2}}\right\}& \text{~~if~neither } A \text{~nor~} B \text{~hold,}
    \\
    \order{T_{t+1}}& \text{~~if~} A \text{~holds,}
    \\
    \order{T_{t+2}}& \text{~~if~} B \text{~holds}.
    \end{cases}
\end{align}
We extend the definition of $M$ by setting $M:=0$ if $F'\notin \mathcal{F}(m,t)$. Using \eqref{eq:probCondition} we obtain
\begin{align}\label{eq:expectationBridgeNumbers}
   \expect{\min\left\{B_{e(t+1)},B_{e(t+2)}\right\}}&=\condexpect{M}{F'\in \mathcal{F}(m,t)} =\frac{\expect{M}}{\prob{F'\in \mathcal{F}(m,t)}}
   =(1+o(1))\expect{M}.
\end{align}
Furthermore, due to \eqref{eq:bridgeNumber} we obtain
\begin{align}\label{eq:expectationM}
    \expect{M}&\leq \expect{\min\big\{\order{T_{t+1}}, \order{T_{t+2}}\big\}}+\prob{A}\condexpect{\order{T_{t+1}}}{A}+\prob{B}\condexpect{\order{T_{t+2}}}{B}
\end{align}
Using \Cref{prop:expmincomponentsforest} we have
\begin{align}\label{eq:forest6A}
  \expect{\min\big\{\order{T_{t+1}}, \order{T_{t+2}}\big\}}=o(m/(t+2))=o(m/t). 
\end{align}
To bound $\condexpect{\order{T_{t+1}}}{A}$, let $H$ be a possible realisation of $T_{t+2}$ with $u\in V(H)$. Conditioned on the event $T_{t+2}=H$ the remaining forest $F\setminus T_{t+2}$ is a random forest on $m-\order{H}$ vertices having $t+1$ components. Thus, \Cref{rem:expectedtreesize}\eqref{r:expectedtreesize} implies
\begin{align*}
    \condexpect{\order{T_{t+1}}}{T_{t+2}=H}= \frac{m-\order{H}}{t+1}\leq \frac{m}{t}.
\end{align*}
As this is true for all possible realisations $H$ of $T_{t+2}$ that contain $u$, we obtain $\condexpect{\order{T_{t+1}}}{A}\leq m/t$. Combining this with \Cref{rem:expectedtreesize}\eqref{r:expectedtreesize} yields $\prob{A}\condexpect{\order{T_{t+1}}}{A}\leq \frac{1}{t+2}\cdot m/t=o(m/t)$. By symmetry, we also have $\prob{B}\condexpect{\order{T_{t+2}}}{B}=o(m/t)$. Together with \eqref{eq:expectationM} and \eqref{eq:forest6A} this shows that $\expect{M}=o(m/t)$. Finally, combining this with \eqref{eq:wlog} and \eqref{eq:expectationBridgeNumbers} yields the statement.
\end{proof}

\section{Proof of \Cref{thm:giantrainbowtree}}\label{s:weakly}

\subsection{The weakly subcritical regime}\label{s:weaklysub}

\begin{proof}[Proof of \Cref{thm:giantrainbowtree}(\ref{i:sub})]
By \Cref{t:giantLuczak}(\ref{i:Lsub}), whp the largest component in $G(n,p)$
    is a tree of order $(1+o(1))\frac{2}{\eps^2}\log\left(\eps^3 n\right)$, and in particular,
    this is therefore also trivially an upper bound on the order of the largest
    rainbow tree in $G_c(n,p)$. 
    
For the lower bound, conditioned on the likely event that the  largest component of $G(n,p)$ is a tree $T$ of order $(1+o(1))\frac{2}{\eps^2}\log\left(\eps^3 n\right)$, we simply delete all edges of $T$ whose colours appear more than once in $T$. We claim that whp this, slightly wasteful, process leaves a component covering almost all the vertices of $T$, and this component is then a rainbow tree in $G_c(n,p)$.

Indeed, conditioned on the largest component $T$ of $G(n,p)$ being a tree of order $k$, it is clear that (up to relabelling of vertices) $T$ is distributed as $F(k,1)$, a uniformly chosen random tree of order $k$. By \Cref{prop:unifedgedelete}, given a uniformly random chosen edge $e$ of $T$, the expected order of the smaller component in $T-e$ is $O\left(\sqrt{k}\right)$.

Furthermore, the expected number of pairs of edges in $T$ which receive the same colour is $O\left(k^2/c\right)$. Since we choose our colouring uniformly, it follows that the expected size of the largest component of $T$ after deleting all edges appearing in such pairs is at least
\[
|T| - O\left(\frac{k^2}{c}\right)\cdot O\left(\sqrt{k}\right)
= k -  O\left(k\cdot \frac{\left(\log\left(\eps^3n\right)\right)^{3/2}}{\eps^3 n}\right)=(1-o(1))k.
\]
Hence, by Markov's inequality (applied to the \emph{complement} of the largest component), whp $T$ will contain a rainbow tree of order $(1-o(1))k = (1+o(1))\frac{2}{\eps^2}\log\left(\eps^3 n\right)$, as required.
\end{proof}

We note that the proof above actually shows that if we uniformly colour a tree with $k$ vertices with $c = \omega\left(k^{\frac{3}{2}}\right)$ colours, then whp there will be a rainbow subtree of order $(1-o(1))k$. A first consequence of this is that \Cref{thm:giantrainbowtree}(\ref{i:sub}) holds even with significantly fewer colours -- here we can take any $c =\omega\left(\frac{\left(\log(\eps^3 n)\right)^{3/2}}{\eps^3}\right)$. A second consequence is that the above proof also holds without modification in the strictly subcritical regime, where $\eps \in (0,1)$ is a constant, for any $c= \omega\left(( \log n)^{\frac{3}{2}}\right)$. Indeed, by \Cref{t:giantsparse} whp the largest component in this regime is a tree of order $k=O(\log n)$ and so the estimates above will still suffice to show the existence of a rainbow subtree of order $(1-o(1))k$.

\subsection{The weakly supercritical regime}\label{s:weaklysup}

Our aim in this section is to prove \Cref{thm:giantrainbowtree}(\ref{i:sup}),
for which we first collect some preliminary results.
Throughout this section let us fix $\alpha > 0$, $c=\alpha n$ and $\eps= \eps(n)$ such that $\eps^3n \to \infty$ and $\eps=o(1)$, and set $p=\frac{1+\eps}{n}$. We will think of the coloured graph $G_c(n,p)$ as a pair $(G(n,p),\chi)$ where $G(n,p)$ is a random graph and $\chi$ is a uniformly chosen random colouring of the edges of $G(n,p)$ with colours from $[c]$.

We will first be interested in properties of  the \emph{$2$-core} $C$ of $G(n,p)$, the unique maximal subgraph of minimum degree at least two. Let $\bm{d}$ be the degree sequence  of $C$. The properties of $C$ in the weakly supercritical regime are quite well understood. In particular, work of {\L}uczak \cite{L90,L91} implies the following:
\begin{lemma}[\cite{L90,L91}]\label{lem:coreproperties}
Let $C$ be the $2$-core of $G(n,p)$.
\begin{enumerate}[(a)]
\item\label{i: C} Whp $C$ is of order and size $\Theta\left(\eps^2 n\right)$.
\item\label{i: Cdeg3} Whp $C$ contains $\Theta\left(\eps^3 n\right)$ vertices of degree three and has $o\left(\eps^3 n\right)$ edges incident to vertices of degree at least four. 
\item \label{i:unifcore} Conditional on its degree sequence $\bm{d}$, $C$ is distributed uniformly at random from all simple graphs on $V(C)$ with degree sequence $\bm{d}$.
\item\label{i:contiguity} Whp the degree sequence $\bm{d}$ is such that the multigraph $G^*(\bm{d})$ drawn from the configuration model is simple with probability
$\Theta(1)$.
\end{enumerate}
\end{lemma}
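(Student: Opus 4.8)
The plan is to establish the four parts essentially independently, importing the genuinely delicate input -- the fine structure of the near-critical $2$-core -- from {\L}uczak. Parts~(a) and~(b) concern the degree sequence of $C$. By \Cref{t:giantLuczak}(\ref{i:Lsup}) the giant $L_1$ whp has order $(2\eps+O(\eps^2))n=\Theta(\eps n)$. A first-moment computation -- a vertex lies in $C$ precisely when at least two of the branches of $G(n,p)$ hanging off it reach a cycle, which for a uniformly random vertex happens with probability $\Theta(\eps^2)$ -- then yields $\expect{\order{C}}=\expect{e(C)}=\Theta(\eps^2n)$, and, refining the same computation, that the expected number of vertices of $C$ of degree exactly $k$ is $\Theta(\eps^2 n)$ for $k=2$, $\Theta(\eps^3 n)$ for $k=3$, and decays at least geometrically in $k$ for $k\ge3$; summing $k$ times these estimates over $k\ge4$ bounds the expected number of edges incident to a vertex of degree $\ge4$ by $o(\eps^3 n)$. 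Concentration of each of these quantities around its expectation (e.g.\ by the second-moment method) then gives~(a) and~(b); alternatively, all of this, with explicit constants, is already contained in \cite{L90,L91}.

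For part~(c) the key observation is that the $2$-core is an \emph{induced} subgraph of $G(n,p)$: the peeling process producing $C$ deletes vertices together with their incident edges, never standalone edges, so every edge between two surviving vertices survives, i.e.\ $C=G(n,p)[V(C)]$. Fix a candidate vertex set $S$. Running the peeling on $G(n,p)$, one checks by induction on the peeling steps that no vertex of $S$ is ever removed as long as $G[S]$ has minimum degree at least $2$, and hence that which vertices of $V(G)\setminus S$ are removed depends only on the edges within $V(G)\setminus S$ together with the number of edges from each such vertex into $S$ -- in particular, only on the potential edges \emph{meeting} $V(G)\setminus S$. Thus $\{V(C)=S\}$ is the intersection of an event measurable with respect to the potential edges inside $S$ (namely that $G[S]$ has minimum degree $\ge2$) and an event measurable with respect to the potential edges meeting $V(G)\setminus S$; since the edge indicators of $G(n,p)$ are mutually independent, conditioning on $\{V(C)=S\}$ leaves $G[S]$ distributed as $G(|S|,p)$ conditioned on having minimum degree $\ge2$. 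Conditioning additionally on the degree sequence $\bm{d}$ -- all of whose entries are automatically $\ge2$ -- makes the minimum-degree condition vacuous, and $G(|S|,p)$ conditioned on its degree sequence being $\bm{d}$ is uniform over all simple graphs on $S$ with that degree sequence, since they all have exactly $\tfrac12\sum_i d_i$ edges and hence the same $G(|S|,p)$-probability.

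For part~(d), write $\bm{d}=(d_i)$ for the degree sequence of $C$, on $m=\order{C}$ vertices. Parts~(a)--(b) give $\sum_i d_i=2\,e(C)=\Theta(\eps^2n)\to\infty$, that all but $o(\eps^2n)$ of the $d_i$ equal $2$, with $\Theta(\eps^3n)$ of them equal to $3$ and the remainder carrying total degree $o(\eps^3n)$, and -- using the geometric decay of the degree distribution -- that $\sum_i d_i^2=4m+O(\eps^3n)=\Theta(\eps^2n)=O\!\left(\sum_i d_i\right)$, while $\max_i d_i\le\Delta(G(n,p))=O(\log n)=o\!\left(\sqrt{\sum_i d_i}\right)$. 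For degree sequences with these properties the usual analysis of loops and multiple edges in the configuration model (as carried out in \cite{L90,L91}) shows that $G^*(\bm{d})$ is simple with probability $\exp\!\left(-\lambda-\lambda^2+o(1)\right)$, where $\lambda=\frac{\sum_i d_i(d_i-1)}{2\sum_i d_i}=\Theta(1)$; in particular this probability is $\Theta(1)$, which is part~(d).

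I expect the only real obstacle, in a self-contained treatment, to lie in parts~(a)--(b): establishing the precise order of the $2$-core and -- crucially -- the geometric decay of its degree distribution, which is what powers both the ``$o(\eps^3n)$ edges at degree $\ge4$'' claim and the control of $\sum_i d_i^2$ used in~(d). The factorisation in part~(c) is conceptually straightforward but has to be phrased with some care (one must verify that $\{V(C)=S\}$ genuinely splits across the two disjoint edge sets), and part~(d) is then little more than bookkeeping on top of a standard configuration-model fact.
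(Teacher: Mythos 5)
Your proposal is correct in substance and takes a genuinely different route from the paper, which simply cites {\L}uczak \cite{L90,L91} for all four parts without giving an argument. Your sketches for (a)--(b) ultimately lean on the same sources for the precise degree-distribution facts (as you acknowledge), so there the two approaches coincide. Parts (c)--(d) are where you add something: for (c) you give a clean probabilistic factorisation argument, and for (d) you reduce to the standard configuration-model criterion for simplicity.

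The factorisation in (c) deserves a word of care, and it does hold up. Define $P'$ to be the modified peeling that removes vertices of $V\setminus S$ of current degree $\le 1$ but never touches $S$; this is measurable with respect to the edges meeting $V\setminus S$ alone. One checks (exactly as you indicate) that $\{V(C)=S\}=\{\delta(G[S])\ge 2\}\cap\{P'\text{ removes all of }V\setminus S\}$: the forward direction is immediate, and for the converse, if $\delta(G[S])\ge 2$ then by minimal-counterexample no vertex of $S$ is ever peeled in the \emph{original} process, so the original process and $P'$ agree on $V\setminus S$. Since the two events live on disjoint edge sets, they are independent, and conditioning on $\{V(C)=S\}$ therefore leaves $G[S]$ distributed as $G(|S|,p)$ conditioned on $\delta\ge 2$; conditioning further on the degree sequence (whose entries are all $\ge 2$) makes that restriction vacuous and yields uniformity, since all graphs with a given degree sequence have the same number of edges and hence the same $G(|S|,p)$-probability. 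For (d), your verification that $\sum_i d_i^2=\Theta(\eps^2 n)=O\bigl(\sum_i d_i\bigr)$ and $\max_i d_i=O(\log n)=o\bigl(\eps\sqrt{n}\bigr)$ (the latter using $\eps\gg n^{-1/3}$) is exactly what is needed to invoke the asymptotic $\mathbb{P}\bigl(G^*(\bm d)\text{ simple}\bigr)=\exp\bigl(-\lambda-\lambda^2+o(1)\bigr)$ with $\lambda=\Theta(1)$. In short, the paper outsources the whole lemma to \cite{L90,L91}; you keep that outsourcing for (a)--(b) but give self-contained and correct arguments for (c)--(d), which is a modest but real gain in transparency.
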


In what follows, we will assume that we have conditioned on some specific values of $V(C)$ and $\bm{d}$ which satisfy the conclusions of \Cref{lem:coreproperties}; for simplicity, we will suppress this conditioning in the notation.

We generate a list $L' \in [c]^{e(C)}$ of colours of length $e(C)$ uniformly at random and let $X$ be the number of elements of $L'$ which are not unique,
i.e.\ the number of coordinates in which a colour appears in $L'$ that also appears in some other cooordinate.

\begin{claim}\label{claim:colourclash}
Whp $X = o\left(\eps^3 n\right)$.
\end{claim}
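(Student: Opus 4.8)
The plan is to estimate $\expect{X}$ directly and then apply Markov's inequality, since $X$ is a nonnegative integer-valued random variable. By \Cref{lem:coreproperties}\eqref{i: C} we have conditioned on a degree sequence $\bm{d}$ with $e(C) = \Theta(\eps^2 n)$, so $L'$ is a uniformly random string of length $\ell := e(C) = \Theta(\eps^2 n)$ over the alphabet $[c]$ with $c = \alpha n$. A coordinate $i$ contributes to $X$ precisely when the colour $L'_i$ also occurs in some coordinate $j \neq i$; by a union bound over $j$, the probability of this is at most $(\ell-1)/c$. Hence by linearity of expectation $\expect{X} \leq \ell \cdot (\ell-1)/c = O(\eps^4 n^2 / n) = O(\eps^4 n)$.

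The key observation is that $\eps^4 n = \eps \cdot (\eps^3 n)$, and since $\eps = o(1)$ we get $\expect{X} = o(\eps^3 n)$. Therefore, for any function $\omega = \omega(n) \to \infty$ with $\omega = o(1/\eps)$ (for instance $\omega = \eps^{-1/2}$), Markov's inequality gives
\[
\prob{X \geq \omega \cdot \eps^4 n} \leq \frac{\expect{X}}{\omega \cdot \eps^4 n} = O\!\left(\frac{1}{\omega}\right) = o(1),
\]
and since $\omega \cdot \eps^4 n = o(\eps^3 n)$ this shows that whp $X = o(\eps^3 n)$, as claimed. (Alternatively, one can simply note $\prob{X \geq \eps^{1/2} \cdot \eps^3 n} \leq \expect{X}/(\eps^{1/2}\eps^3 n) = O(\eps^{1/2}) = o(1)$.)

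There is essentially no obstacle here: the statement is a routine first-moment computation, and the only thing one must be slightly careful about is making the arithmetic $\ell^2/c = \Theta(\eps^4 n)$ explicit and then exploiting the slack factor of $\eps = o(1)$ between $\eps^4 n$ and the target bound $\eps^3 n$. The conditioning on $V(C)$ and $\bm{d}$ plays no role beyond fixing the value of $e(C)$ up to the order estimate from \Cref{lem:coreproperties}\eqref{i: C}, and the independence of the entries of $L'$ from $\bm{d}$ is built into the way $L'$ is generated. If one prefers to avoid conditioning on the exact value of $e(C)$, one can instead work on the likely event $e(C) \leq K\eps^2 n$ for a suitable constant $K$ and absorb this into the $O(\cdot)$; this does not change anything.
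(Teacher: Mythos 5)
Your proof is correct and takes essentially the same approach as the paper: a first-moment computation of $\expect{X}$ (the paper expands $1-(1-1/c)^{e(C)-1}$ exactly, where you use a union bound to get the same $O(\eps^4 n)$ estimate) followed by Markov's inequality, noting that the extra factor of $\eps = o(1)$ provides the needed slack over $\eps^3 n$.
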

\begin{proof}
Note that it follows from \Cref{lem:coreproperties} and our conditioning that $e(C) = \Theta\left(\eps^2 n\right)$. Then, it is easy to see that the probability that a single element is not unique is 
\[
1-\left(1-\frac{1}{c}\right)^{e(C)-1} =1 - 1+ \frac{e(C)-1}{\alpha n} + O\left( \left(\frac{e(C)}{\alpha n}\right)^2 \right)=  \Theta\left(\eps^2\right).
\]
Hence $\mathbb{E}(X) = \Theta\left(\eps^2\right) |L'| = \Theta\left(\eps^4n\right)$ and the conclusion holds by Markov's inequality.
\end{proof}

We now condition on the value of $X$, which we may assume satisfies the conclusion of \Cref{claim:colourclash}. We let $L$ be the re-ordering of $L'$ in which the repeated elements appear at the start and we generate a pair
$\left(G^*(\bm{d}),\hat{\chi}\right)$ whose joint distribution is given as follows:
\begin{itemize}
    \item $G^*(\bm{d})$ is generated according to the configuration model with degree sequence $\bm{d}$. Let us assume that we expose the edges in $G^*(\bm{d})$ in some random order $e_1, \ldots, e_{e(C)}$.
    \item $\hat{\chi}$ is given by assigning the colour $L_i$ to $e_i$ for each $i$.
\end{itemize}
Note that, by \Cref{lem:coreproperties}\eqref{i:unifcore}
\begin{equation}\label{e:dchi}
\text{Conditioned on $G^*(\bm{d})$ being simple, $\left(G^*(\bm{d}), \hat{\chi}\right)$
has the same distribution as $\left(C,\chi|_{E(C)}\right)$.}
\end{equation}

The edges $R' := \{e_1,\ldots, e_X\}$ are those whose colours in $\hat{\chi}$ are not unique, and we are interested in the graph remaining when we delete $R'$. Let $\bm{\hat{d}}$ be the degree sequence of this graph, that is, for each vertex $i \in V(C)$ we have 
\[
\hat{d}_i = d_i - |\{j \colon j \leq X \text{ and } i \in e_j\}|,
\]
where loops are counted with multiplicity two.

By the principle of deferred decisions, it is clear that $G^*(\bm{d}) - R' \sim G^*(\bm{\hat{d}})$ and furthermore if we write $\hat{D}_i := \left|\left\{ j \colon \hat{d}_j = i\right\}\right|$, then by \Cref{lem:coreproperties}\eqref{i: C} and \eqref{i: Cdeg3}, and \Cref{claim:colourclash}, it follows that
\begin{equation} \label{eq:coredegs}
\hat{D}_0,\hat{D}_1 = o\left(\eps^3 n\right), \qquad \hat{D}_2 = \Theta\left(\eps^2n\right) \qquad \text{ and } \qquad \sum_{i\geq 3} \hat{D}_i = \Theta\left(\eps^3 n\right).
\end{equation}

Let $R$ be the set of edges of $C$ which do not receive unique colours in $\chi$.

\begin{claim}\label{claim:C-R}
Any graph property which holds whp for $G^*(\bm{\hat{d}})$ also holds whp for $C-R$.
\end{claim}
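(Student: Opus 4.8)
The plan is to transfer the whp statement from $G^*(\bm{\hat d})$ to $C-R$ via the contiguity property in \Cref{lem:coreproperties}\eqref{i:contiguity}, using the coupling set up above. First I would recall that, by \eqref{e:dchi}, conditioned on $G^*(\bm{d})$ being simple, the pair $\left(G^*(\bm{d}),\hat\chi\right)$ has the same distribution as $\left(C,\chi|_{E(C)}\right)$. Under this coupling the edge set $R'=\{e_1,\dots,e_X\}$ (those whose colours in $\hat\chi$ are not unique) corresponds exactly to $R$ (the edges of $C$ whose colours in $\chi$ are not unique), and hence $G^*(\bm{d})-R'$ corresponds to $C-R$. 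By the principle of deferred decisions established above, $G^*(\bm{d})-R'\sim G^*(\bm{\hat d})$ unconditionally, so for any graph property $\mathcal P$,
\[
\prob{C-R \in \mathcal P} = \condprob{G^*(\bm{d})-R' \in \mathcal P}{G^*(\bm{d}) \text{ simple}}.
\]

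The next step is to remove the conditioning. Write $\mathcal S$ for the event that $G^*(\bm{d})$ is simple. Then for any event $\mathcal P$,
\[
\condprob{G^*(\bm{d})-R' \notin \mathcal P}{\mathcal S} \le \frac{\prob{G^*(\bm{d})-R' \notin \mathcal P}}{\prob{\mathcal S}} = \frac{\prob{G^*(\bm{\hat d}) \notin \mathcal P}}{\prob{\mathcal S}},
\]
using again that $G^*(\bm{d})-R'\sim G^*(\bm{\hat d})$. By \Cref{lem:coreproperties}\eqref{i:contiguity}, our conditioning on $\bm d$ guarantees $\prob{\mathcal S} = \Theta(1)$, bounded away from zero. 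Hence if $\prob{G^*(\bm{\hat d}) \notin \mathcal P} = o(1)$, then $\condprob{C-R \notin \mathcal P}{\ \cdot\ } = o(1)$, i.e. $\mathcal P$ holds whp for $C-R$. One should take a moment to note that the conditioning on $X$ (which we have assumed satisfies \Cref{claim:colourclash}) is harmless: it affects only $\bm{\hat d}$, which is exactly the object whose whp-behaviour we are assuming, and the degree-sequence bounds \eqref{eq:coredegs} hold for this conditioned $\bm{\hat d}$.

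The only real subtlety — which I would flag as the main point to get right rather than a genuine obstacle — is to check that the event $\mathcal S=\{G^*(\bm d)\text{ simple}\}$ really is the correct conditioning event relating $G^*(\bm d)$ to $C$, and that it is \emph{not} being confused with the event that $G^*(\bm{\hat d})$ is simple, which is a different (and irrelevant) event. The argument only needs simplicity of $G^*(\bm d)$, because that is what makes $G^*(\bm d)$ distributed as $C$; the loops and multi-edges possibly created or destroyed by deleting $R'$ play no role, since $G^*(\bm d)-R'\sim G^*(\bm{\hat d})$ holds as an unconditional identity of distributions by deferred decisions. Everything else is a routine application of the elementary inequality $\condprob{\mathcal A}{\mathcal S}\le \prob{\mathcal A}/\prob{\mathcal S}$ together with $\prob{\mathcal S}=\Theta(1)$.
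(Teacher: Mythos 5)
Your proposal is correct and follows essentially the same approach as the paper's own proof: both use the coupling from \eqref{e:dchi}, the observation that $G^*(\bm{d})-R'\sim G^*(\bm{\hat d})$, and the contiguity statement \Cref{lem:coreproperties}\eqref{i:contiguity} together with the elementary inequality $\condprob{\mathcal A}{\mathcal S}\le \prob{\mathcal A}/\prob{\mathcal S}$. You simply spell out in more detail what the paper leaves implicit.
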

\begin{proof}
We generate $G^*(\bm{d}) \supseteq G^*(\bm{\hat{d}})$ and
recall that \Cref{lem:coreproperties}\eqref{i:contiguity}
states that the probability that $G^*(\bm{d})$ is simple is $\Theta(1)$. 
It follows from \eqref{e:dchi} that conditioned on this event $C-R \sim G^*(\bm{\hat{d}})$ and so the claim follows.  
\end{proof}

We then use the following result,
which is a special case of a result of {\L}uczak~\cite[Theorem~2]{Luczak}.

\begin{lemma}\label{lem:Luczak}
Let $\bm{\hat{d}}=(\hat{d}_1,\ldots,\hat{d}_{|V(C)|}) \in \mathbb{N}^{|V(C)|}$ be a degree sequence in which \eqref{eq:coredegs} holds.
Then whp $G^*(\bm{\hat{d}})$ contains a component of order $(1-o(1))|V(C)|$.
\end{lemma}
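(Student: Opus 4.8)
The plan is to show that the configuration model random multigraph $G^*(\bm{\hat{d}})$, with a degree sequence satisfying \eqref{eq:coredegs}, has a giant component covering almost all vertices, by reducing to the classical giant component criterion for the configuration model (the Molloy--Reed / {\L}uczak condition). The key structural feature of \eqref{eq:coredegs} is that the degree sequence is ``essentially $2$-regular'': $\hat{D}_2 = \Theta(\eps^2 n)$ vertices have degree exactly $2$, only $o(\eps^3 n)$ vertices have degree $0$ or $1$, and only $\Theta(\eps^3 n)$ vertices --- a $o(1)$ fraction of $|V(C)|$, since $|V(C)| = \Theta(\eps^2 n)$ --- have degree at least $3$. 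A random $2$-regular multigraph is a disjoint union of cycles whose component sizes are typically small, so on its own the degree-$2$ part does not produce a giant; the vertices of degree $\geq 3$ are exactly what glue the cycle-segments together into one large component. The first step is to record that $\sum_i i \hat{D}_i = \Theta(\eps^2 n)$, i.e.\ the number of half-edges is comparable to $|V(C)|$, and that the degree sequence satisfies the hypotheses under which {\L}uczak's result \cite[Theorem~2]{Luczak} applies --- in particular, verifying whatever regularity/tail conditions that theorem requires (bounded second moment of the degree distribution, which holds here since $\sum_i i^2 \hat{D}_i = \Theta(\eps^2 n) \cdot O(1)$ plus the negligible contribution $\sum_{i \ge 3} i^2 \hat D_i$, using that the degrees in the $2$-core of $G(n,p)$ in this regime are bounded in a suitable averaged sense by \Cref{lem:coreproperties}).

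The heart of the argument is to check the branching-process survival criterion. Writing $\lambda_i := \hat{D}_i / |V(C)|$ for the (asymptotic) degree distribution, the relevant quantity governing whether a giant exists is $\sum_i i(i-2)\lambda_i$: a giant component of positive density exists precisely when this is bounded below by a positive constant times the relevant normalisation. Here $\sum_i i(i-2)\lambda_i = 0 \cdot \lambda_2 + \sum_{i \ge 3} i(i-2)\lambda_i - \lambda_1 + o(\cdot)$, and since $\sum_{i\ge3}\hat D_i = \Theta(\eps^3 n)$ while $\hat D_1 = o(\eps^3 n)$ and $|V(C)| = \Theta(\eps^2 n)$, the degree-$\geq 3$ contribution dominates and gives $\sum_i i(i-2)\lambda_i = \Theta(\eps)$, which is positive. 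Thus we are in the supercritical regime of the configuration model. The second, more delicate, point is that we want the giant to cover a $(1-o(1))$ fraction of $|V(C)|$, not merely a positive fraction; this is where one needs the quantitative form of {\L}uczak's theorem, which states that as the ``criticality parameter'' $\sum_i i(i-2)\lambda_i$ (suitably normalised) tends to $0$ from above while staying positive --- exactly our situation, since it is $\Theta(\eps) = o(1)$ but $\eps^3 n \to \infty$ ensures we are not too close to criticality --- the size of the giant, as a proportion of the vertices of positive degree, tends to $1$. Concretely, the survival probability of the associated two-type branching process tends to $1$ because a typical degree-$2$ vertex almost surely eventually reaches a vertex of degree $\geq 3$ (there are $\Theta(\eps^3 n)$ of these, so the expected length of a cycle-segment before hitting one is $\Theta(1/\eps) = o(|V(C)|)$ but $\omega(1)$), and from any degree-$\geq 3$ vertex the exploration branches and survives. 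Since almost all vertices have degree $2$ and lie on such segments, almost all vertices get absorbed into the giant.

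The main obstacle I expect is not the heuristic above but matching our degree sequence to the precise hypotheses of {\L}uczak's \cite[Theorem~2]{Luczak}: that theorem is stated for sequences of degree distributions satisfying certain uniform integrability / bounded-moment and convergence conditions, and one must check that \eqref{eq:coredegs} together with the structural information from \Cref{lem:coreproperties} (that the $2$-core has $o(\eps^3 n)$ edges at vertices of degree $\geq 4$, so the high-degree vertices are overwhelmingly of degree exactly $3$, and the maximum degree is controllably small) is enough to invoke it with the conclusion ``giant of order $(1-o(1))|V(C)|$''. In particular one has to be a little careful that the $o(\eps^3 n)$ vertices of degree $0$ and $1$ --- which are genuinely not in the giant, plus whatever small components of degree-$2$ vertices sit off the giant --- together comprise only $o(|V(C)|) = o(\eps^2 n)$ vertices; the degree-$0$ and degree-$1$ count is $o(\eps^3 n) = o(\eps^2 n)$ directly, and the total number of vertices on degree-$2$ cycle-segments not reaching any degree-$\geq 3$ vertex is controlled by the same branching-process estimate. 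Once the hypotheses are verified, the conclusion is immediate from \cite[Theorem~2]{Luczak}, so the write-up is essentially a citation check plus the computation $\sum_i i(i-2)\lambda_i = \Theta(\eps)$ and the observation $|V(C)| = \Theta(\eps^2 n)$; I would keep it short and defer the branching-process details to the cited theorem.
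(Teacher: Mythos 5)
Your proposal takes exactly the same route as the paper: the paper offers no written proof for this lemma, simply stating it as ``a special case of a result of {\L}uczak~\cite[Theorem~2]{Luczak}'', and your plan likewise reduces to citing that theorem after checking its hypotheses hold for a degree sequence satisfying \eqref{eq:coredegs}. Your verification of the first-moment and Molloy--Reed-type quantities is correct (in particular $\sum_i i(i-2)\hat{D}_i/|V(C)| = \Theta(\eps)>0$, with the degree-$3$ contribution dominating since \Cref{lem:coreproperties}\eqref{i: Cdeg3} controls the degree-$\geq 4$ mass), and your closing heuristic --- degree-$2$ chains of typical length $\Theta(1/\eps)$ almost surely terminate at degree-$\geq 3$ vertices and get absorbed into a single large component, with only $o(\eps^2 n)$ vertices stranded on pure cycles or near degree-$\leq 1$ vertices --- is the right picture and is essentially the content of {\L}uczak's theorem.

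One point of caution: you frame the ``giant covers $(1-o(1))$ of the vertices'' conclusion as a consequence of the criticality parameter $\sum_i i(i-2)\lambda_i$ tending to $0$ from above. That is not a correct general principle --- in $G(n,(1+\eps)/n)$ with $\eps \to 0$ the analogous parameter also tends to $0$ from above, yet the giant covers only a $\Theta(\eps)$ fraction. What actually forces the giant to be almost spanning here is the structural fact that $\hat{D}_2/|V(C)| \to 1$ (the degree sequence is nearly $2$-regular, so the generating function degenerates to $x^2$ and the fixed-point analysis gives near-full coverage); your chain-absorption heuristic captures this correctly, but the ``criticality $\to 0^+$'' framing does not, and if taken as the justification it would be a gap. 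Since the whole argument is ultimately a citation to {\L}uczak, this does not break your proof, but the stated reason should be the nearly-$2$-regular structure rather than the vanishing criticality parameter.
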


By \Cref{claim:C-R} and \Cref{lem:Luczak}, it follows that whp $C-R$ contains a component $\hat{C}$ of order $(1-o(1))|V(C)| = \Theta\left(\eps^2 n\right)$. Note that, by construction, this component is rainbow coloured; however it is still significantly smaller than $2\eps n$. In order to extend this to a larger rainbow subgraph, we will consider how the rest of the giant component is distributed around its $2$-core. The component structure of $G(n,p)$ in the weakly supercritical regime is reasonably well-understood, and in particular work of {\L}uczak \cite{L90} implies the following: 

\begin{lemma}[\cite{L90}]\label{lem:compstructure}
Let $L$ be the vertex set of the largest component of $G(n,p)$, let $C$ be the $2$-core of $G(n,p)$ and let $U$ be the set of vertices contained in unicyclic components. 
\begin{enumerate}[(a)]
    \item\label{i:sizeLU} Whp $|L| = (2+o(1))\eps n$ and $|U| = \Theta\left(\frac{1}{\eps^2}\right) = o(\eps n)$;
    \item\label{i: unique} Whp the largest component is the unique component with more than one cycle, and hence $V(C) \subseteq L \cup U$.
    \item Conditioned on \eqref{i: unique}, $G(n,p)[L\cup U] - E(C) \sim F(L \cup U, V(C))$,
\end{enumerate}
where $F(X,Y)$ is a uniformly chosen random forest whose vertex set is $X$, which contains $|Y|$ trees and in which each element of $Y$ lies in a different tree.
\end{lemma}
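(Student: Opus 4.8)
The plan is to take parts (a) and (b) as (essentially classical) structural facts about the weakly supercritical phase, following \cite{L90,L91}, and to deduce part (c) by a symmetry argument. In part (a), the estimate $|L|=(2+o(1))\eps n$ is immediate from \Cref{t:giantLuczak}\eqref{i:Lsup} together with the expansion $\gamma(1+\eps)=2\eps+O(\eps^2)$. For $|U|$ I would use a first-moment computation: writing $u_k$ for the number of connected unicyclic graphs on $[k]$,
\[
\expect{|U|}=\sum_{k\geq 3} k\binom{n}{k}u_k\,p^{k}(1-p)^{\binom{k}{2}-k+k(n-k)},
\]
and using $u_k=(1+o(1))\sqrt{\pi/8}\,k^{k-1/2}$, $np=1+\eps$ and Stirling's formula, the summand simplifies to $\Theta\bigl(e^{-\Theta(\eps^2 k)}\bigr)$ on the range $k=O\bigl(\eps^{-2}\log(\eps^3 n)\bigr)$ — a range that suffices because, by \Cref{t:giantLuczak}, whp every non-giant (hence every unicyclic) component has order at most $(1+o(1))\tfrac{2}{\eps^2}\log(\eps^3 n)$ — so that $\expect{|U|}=\Theta(\eps^{-2})$, which is $o(\eps n)$ since $\eps^3 n\to\infty$; a matching second-moment estimate then gives the whp statement. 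The role of the hypothesis $\eps^3 n\to\infty$ (and of $\eps=o(1)$) here is precisely to keep the correction terms $\Theta(\eps k^2/n)$ and $O(\eps^3 k)$ in the exponent under control over the relevant range.

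For part (b), I would combine a first-moment bound with \Cref{t:giantLuczak,lem:coreproperties}. A routine first-moment computation shows that whp $G(n,p)$ has no component of order at most $n^{2/3}$ containing two independent cycles — such a component would contain a connected subgraph of excess at least $2$ on $O(n^{2/3})$ vertices, and the expected number of these is $o(1)$ for $p=(1+\eps)/n$. Combined with \Cref{t:giantLuczak}, which guarantees a unique component of order exceeding $n^{2/3}$ (the giant $L$), this shows there is whp at most one complex component and that it must be $L$. On the other hand, by \Cref{lem:coreproperties}\eqref{i: Cdeg3} the $2$-core $C$ has $\Theta(\eps^3 n)>0$ vertices of degree $3$, each of which lies in the $2$-core of a complex component (a connected graph of minimum degree $2$ containing a vertex of degree at least $3$ has excess at least $2$); so a complex component exists, i.e.\ $L$ is complex, indeed of excess $\Theta(\eps^3 n)\to\infty$. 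Finally, every vertex of $C$ lies on a cycle of $G(n,p)$, hence in a unicyclic or in a complex component, so $V(C)\subseteq U\cup L$.

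For part (c) I would condition on event (b) and on the realizations of $L$, $U$ and of the labelled graph $C$ on $V(C)$, and argue by symmetry. Because the $2$-core is an \emph{induced} subgraph obtained by iterated leaf deletion, every graph on $L\cup U$ compatible with this conditioning is precisely of the form $C\cup F$, where $F$ is a forest on vertex set $L\cup U$ with exactly $|V(C)|$ components, each containing exactly one vertex of $V(C)$: deleting $E(C)$ removes exactly the edges with both ends in $V(C)$, every cycle of $G(n,p)[L\cup U]$ lies inside $V(C)$ (so the result is acyclic), and maximality of the $2$-core rules out a component of $G(n,p)[L\cup U]-E(C)$ containing no vertex of $V(C)$ or containing two or more. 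Conversely every such $F$ gives a valid realization $C\cup F$, so $F\mapsto C\cup F$ is a bijection. In $G(n,p)$ the probability of the event $G(n,p)[L\cup U]=C\cup F$ equals $p^{e(C)+e(F)}(1-p)^{\binom{|L\cup U|}{2}-e(C)-e(F)}$ times the probability of a further event, independent of $F$, that fixes the structure on $[n]\setminus(L\cup U)$ and forbids edges between $L\cup U$ and its complement. Since $e(F)=|L\cup U|-|V(C)|$ for every valid $F$, all these probabilities coincide, so conditionally $G(n,p)[L\cup U]-E(C)$ is uniform among forests of this type, i.e.\ distributed as $F(L\cup U,V(C))$.

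The step I expect to be most delicate is the bookkeeping in part (c): one has to be sure that the conditioning genuinely fixes $e(F)$, that every $F$-independent contribution (the structure on the complement of $L\cup U$ and the absence of crossing edges) factors out cleanly, and that $F\mapsto C\cup F$ really is a bijection onto the graphs on $L\cup U$ whose $2$-core is $C$. The moment estimates behind (a) and the uniqueness claim in (b), while standard, also need care with the truncation scale and the correction terms in the exponents, and are perhaps most efficiently imported wholesale from \cite{L90,L91}.
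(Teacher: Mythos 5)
The paper does not prove this lemma at all: it is imported wholesale from \L{}uczak's work \cite{L90,L91} as a black box, so there is no ``paper's own proof'' to compare against. Your outline of (a) and (b) is a reasonable reconstruction of the standard arguments, modulo one small slip in (b): it is not true that every vertex of the $2$-core lies on a cycle (e.g.\ two disjoint cycles joined by a path live in the $2$-core but the internal path vertices lie on no cycle). What is true, and what you actually need, is that every vertex of $C$ lies in a component containing a cycle, hence in a unicyclic or complex component; so $V(C)\subseteq L\cup U$ still follows.

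The genuine gap is in (c), and it is precisely in the bijectivity claim that you flag as delicate. You condition on the \emph{separate} realizations of $L$ and $U$, but then the map $F\mapsto C\cup F$ is \emph{not} a bijection from all of $\mathcal{F}(L\cup U,V(C))$ onto the compatible graphs. If a tree of $F$ is rooted at a vertex $r\in L\cap V(C)$ but contains a vertex of $U$, then in $C\cup F$ the complex component meets $U$, contradicting the conditioning that $L$ is exactly the vertex set of the complex component and $U$ exactly the set covered by unicyclic ones. So with your conditioning the admissible forests are only those whose trees respect the $(L,U)$ partition, and these are \emph{not} distributed as $F(L\cup U, V(C))$. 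The correct statement, and the one the paper actually uses (see its footnote, which relabels $L\cup U=[m]$ and $V(C)=[t]$ and makes no reference to the $L$--$U$ split), conditions only on the union $L\cup U$ together with $V(C)$ and the labelled graph $C$; then every forest in $\mathcal{F}(L\cup U,V(C))$ does yield a compatible realization (the random set $L$ being the complex part of $C$ together with whichever trees of $F$ happen to attach to it), your probability factorization goes through, and the conditional distribution is uniform as claimed. As written, the over-specified conditioning breaks the symmetry argument; there is also a secondary point you should address, namely that the conditioning on event (b) in principle constrains $F$ so that the complex component of $C\cup F$ is actually the largest, although on the event of part (a) this holds deterministically.
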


In what follows, let us condition on some specific values of $L$ and $U$ and the fact that the high probability events of \Cref{lem:compstructure}\eqref{i:sizeLU} and~\eqref{i: unique} hold. In particular, if we let $F = G(n,p)[L\cup U] - C$ then\footnote{Recall that $F(m,t)$ is a random forest with $m$ vertices and $t$ components. Formally here we are relabelling the vertices of $G(n,p)$ so that $L \cup U = [m]$ and $V(C) =[t]$.} $F \sim F(m,t)$ where $m = (2+o(1))\eps n$ and $t = \Theta\left(\eps^2 n\right)$.
We can now complete our analysis of the weakly supercritical case.

\begin{proof}[Proof of \Cref{thm:giantrainbowtree}(\ref{i:sup})]

Let us now consider the colouring $\chi|_F$. We know that $\chi|_{\hat{C}}$ is rainbow and uses a set $Z$ of $\Theta\left(\eps^2 n\right)$ colours. Recall that $B_e$, the \emph{bridge-number} of $e$, is the number of vertices in $F$ disconnected from the root of their component when we delete $e$, i.e. the number of vertices contained in the branch of $e$. We build a rainbow subgraph of $F$ which shares no colour with $Z$ as follows:
\begin{enumerate}[(1)]
\item\label{i:colshared} For each colour $i \in Z$ and each edge $e$ in $F$ of colour $i$, we delete the branch of $e$;
\item\label{i:colthree} For each colour $i \in [c]$ which appears at least three times in $F$ and each edge $e$ of colour $i$, we delete the branch of $e$;
\item\label{i:coltwo} For each colour $i \in [c]$ which appears on exactly two edges $e$ and $f$ of $F$, where wlog $B_e \leq B_f$, we delete the branch of $e$;
\item\label{i:notinhatC} We delete all vertices in components whose roots are not in $V(\hat{C})$.
\end{enumerate}
See \Cref{fig: cut forest} for an example of this construction.

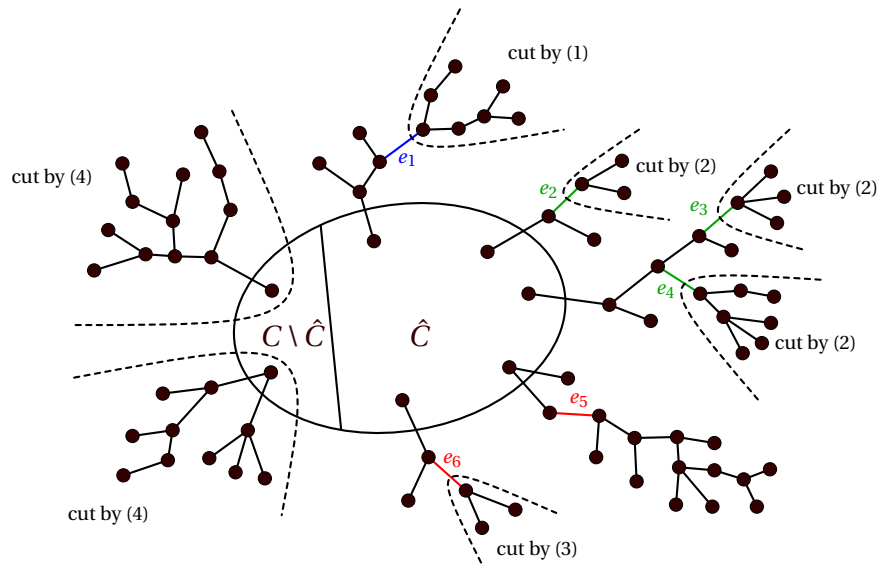
\begin{figure}[h]
    \centering
   \definecolor{ududff}{rgb}{0.2,0.,0.}
\begin{tikzpicture}[line cap=round,line join=round,>=triangle 45,x=1.0cm,y=1.0cm]
\clip(1.,0.) rectangle (14.,8.);
\draw [rotate around={9.6705181649247:(7.018630879943382,3.493293893284993)},line width=0.8pt] (7.018630879943382,3.493293893284993) ellipse (2.220480932695539cm and 1.5046124421558438cm);
\draw [line width=0.8pt] (5.9682381521604455,4.730625106801324)-- (6.24096087631946,2.0039070251051267);
\draw [line width=0.8pt] (8.731500068580912,3.8176360742560385)-- (9.805925512791445,3.6699025756770913);
\draw [line width=0.8pt] (9.805925512791445,3.6699025756770913)-- (10.450580779317765,4.180254661677091);
\draw [line width=0.8pt] (9.805925512791445,3.6699025756770913)-- (10.356568552949343,3.4550174868349863);
\draw [line width=0.8pt] (10.450580779317765,4.180254661677091)-- (11.001223819475662,4.583164203256038);
\draw [color=darkgreen,line width=0.8pt] (10.450580779317765,4.180254661677091)-- (11.014654137528293,3.8176360742560385);
\draw [line width=0.8pt] (5.320199283212473,3.857927028413933)-- (4.514380200054573,4.3011275241507745);
\draw [line width=0.8pt] (4.514380200054573,4.3011275241507745)-- (4.030888750159834,4.314557842203406);
\draw [line width=0.8pt] (4.514380200054573,4.3011275241507745)-- (4.769556243054574,4.9323524726244585);
\draw [line width=0.8pt] (4.030888750159834,4.314557842203406)-- (4.00402811405457,4.784618974045511);
\draw [line width=0.8pt] (4.030888750159834,4.314557842203406)-- (3.6414095266335154,4.341418478308669);
\draw [line width=0.8pt] (4.00402811405457,4.784618974045511)-- (4.138331294580887,5.402413604466563);
\draw [line width=0.8pt] (4.00402811405457,4.784618974045511)-- (3.466815391949304,5.039795017045511);
\draw [line width=0.8pt] (4.769556243054574,4.9323524726244585)-- (4.621822744475626,5.442704558624458);
\draw [line width=0.8pt] (4.621822744475626,5.442704558624458)-- (4.380077019528256,5.966486962677088);
\draw [line width=0.8pt] (3.6414095266335154,4.341418478308669)-- (3.1444877586861444,4.663746111571827);
\draw [line width=0.8pt] (3.6414095266335154,4.341418478308669)-- (2.9564633059493013,4.126533389466564);
\draw [line width=0.8pt] (3.466815391949304,5.039795017045511)-- (3.3325122114229875,5.55014710304551);
\draw [line width=0.8pt] (5.306768965159841,2.7700712661507767)-- (4.514380200054573,2.5686164953613035);
\draw [line width=0.8pt] (5.306768965159841,2.7700712661507767)-- (5.,2.);
\draw [line width=0.8pt] (4.514380200054573,2.5686164953613035)-- (3.8697249335282535,2.4880345870455143);
\draw [line width=0.8pt] (4.514380200054573,2.5686164953613035)-- (4.,2.);
\draw [line width=0.8pt] (4.,2.)-- (3.466815391949304,1.9239612288349885);
\draw [line width=0.8pt] (4.,2.)-- (3.936876523791412,1.601633595571831);
\draw [line width=0.8pt] (3.936876523791412,1.601633595571831)-- (3.3459425294756193,1.4001788247823577);
\draw [line width=0.8pt] (5.,2.)-- (4.48751956394931,1.6284942316770943);
\draw [line width=0.8pt] (5.,2.)-- (4.836707833317733,1.4404697789402523);
\draw [line width=0.8pt] (5.,2.)-- (5.226187056844051,1.359887870624463);
\draw [line width=0.8pt] (6.676661406528269,4.5160126129928795)-- (6.487293921986163,5.171412133961296);
\draw [line width=0.8pt] (6.487293921986163,5.171412133961296)-- (5.95142423168616,5.55014710304551);
\draw [line width=0.8pt] (6.487293921986163,5.171412133961296)-- (6.75321421942827,5.570292580124454);
\draw [line width=0.8pt] (6.75321421942827,5.570292580124454)-- (6.487293921986163,5.954399676429716);
\draw [color=blue,line width=0.8pt] (6.75321421942827,5.570292580124454)-- (7.329374863886168,5.9987197260034);
\draw [line width=0.8pt] (7.329374863886168,5.9987197260034)-- (7.432788312891432,6.456693571598136);
\draw [line width=0.8pt] (7.329374863886168,5.9987197260034)-- (7.802122059338802,6.013493075861295);
\draw [line width=0.8pt] (7.802122059338802,6.013493075861295)-- (8.141909106070383,6.175999924298137);
\draw [line width=0.8pt] (7.432788312891432,6.456693571598136)-- (7.7578020097651175,6.840800667903398);
\draw [line width=0.8pt] (8.141909106070383,6.175999924298137)-- (8.393056053654595,6.574880370461294);
\draw [line width=0.8pt] (8.141909106070383,6.175999924298137)-- (8.599882951665123,6.1464532245823476);
\draw [line width=0.8pt] (8.186229155644067,4.373651241634983)-- (8.998763397828283,4.846398437087613);
\draw [line width=0.8pt] (8.998763397828283,4.846398437087613)-- (9.60447074200197,4.536158090071824);
\draw [color=darkgreen,line width=0.8pt] (11.001223819475662,4.583164203256038)-- (11.511575905475665,5.026364698992879);
\draw [line width=0.8pt] (11.001223819475662,4.583164203256038)-- (11.430993997159876,4.395139750519196);
\draw [line width=0.8pt] (11.014654137528293,3.8176360742560385)-- (11.55186685963356,3.857927028413933);
\draw [line width=0.8pt] (11.014654137528293,3.8176360742560385)-- (11.323551452738823,3.508738759045513);
\draw [color=darkgreen, line width=0.8pt] (8.998763397828283,4.846398437087613)-- (9.441963893565127,5.274825582966559);
\draw [line width=0.8pt] (11.323551452738823,3.508738759045513)-- (11.578727495738823,3.0252473091507763);
\draw [line width=0.8pt] (11.323551452738823,3.508738759045513)-- (11.833903538738825,3.159550489677092);
\draw [line width=0.8pt] (11.323551452738823,3.508738759045513)-- (11.95477640121251,3.4281568507297235);
\draw [line width=0.8pt] (11.55186685963356,3.857927028413933)-- (11.995067355370404,3.777345120098144);
\draw [line width=0.8pt] (8.476324025580912,2.8372228564139346)-- (9.255282472633548,2.6894893578349874);
\draw [line width=0.8pt] (8.476324025580912,2.8372228564139346)-- (9.013536747686178,2.2328585440455146);
\draw [color=red, line width=0.8pt] (9.013536747686178,2.2328585440455146)-- (9.671622332265128,2.1925675898876196);
\draw [line width=0.8pt] (9.671622332265128,2.1925675898876196)-- (9.631331378107234,1.6419245497297257);
\draw [line width=0.8pt] (9.671622332265128,2.1925675898876196)-- (10.141683464107237,1.8971005927297253);
\draw [line width=0.8pt] (10.141683464107237,1.8971005927297253)-- (10.705756822317765,1.910530910782357);
\draw [line width=0.8pt] (10.141683464107237,1.8971005927297253)-- (10.1685441002125,1.3195969164665684);
\draw [line width=0.8pt] (10.705756822317765,1.910530910782357)-- (10.732617458423029,1.5076213692034102);
\draw [line width=0.8pt] (10.705756822317765,1.910530910782357)-- (11.202678590265137,1.8299490024665677);
\draw [line width=0.8pt] (10.732617458423029,1.5076213692034102)-- (11.202678590265137,1.4673304150455155);
\draw [line width=0.8pt] (10.732617458423029,1.5076213692034102)-- (10.692326504265134,1.0106996012560423);
\draw [line width=0.8pt] (10.732617458423029,1.5076213692034102)-- (11.175817954159873,0.9838389651507793);
\draw [line width=0.8pt] (11.202678590265137,1.4673304150455155)-- (11.592157813791454,1.3464575525718314);
\draw [line width=0.8pt] (11.592157813791454,1.3464575525718314)-- (11.766751948475667,0.9838389651507793);
\draw [line width=0.8pt] (11.592157813791454,1.3464575525718314)-- (11.941346083159878,1.480760733098147);
\draw [line width=0.8pt] (7.054165082791432,2.4000101343086726)-- (7.401898581370379,1.6419245497297257);
\draw [line width=0.8pt] (7.401898581370379,1.6419245497297257)-- (7.133292220317745,1.0644208734665686);
\draw [color=red, line width=0.8pt] (7.401898581370379,1.6419245497297257)-- (7.89882034931775,1.1987240539928843);
\draw [line width=0.8pt] (7.89882034931775,1.1987240539928843)-- (8.113705438159856,0.7018022860455165);
\draw [line width=0.8pt] (7.89882034931775,1.1987240539928843)-- (8.5569059338967,0.9435480109928847);
\draw [line width=0.8pt] (11.511575905475665,5.026364698992879)-- (12.12937053589672,5.106946607308668);
\draw [line width=0.8pt] (11.511575905475665,5.026364698992879)-- (11.95477640121251,5.442704558624458);
\draw [line width=0.8pt] (9.441963893565127,5.274825582966559)-- (9.97380448844934,5.585065929982348);
\draw [line width=0.8pt] (9.441963893565127,5.274825582966559)-- (10.00335118816513,5.156638784103402);
\draw [line width=0.8pt] (11.511575905475665,5.026364698992879)-- (12.0353583095283,4.757758337940248);
\draw [line width=0.8pt, dash pattern= on 2 pt off 2.pt]  (7.82,7.6) .. controls (6.75,5.57) .. (9.2,6);
\draw [line width =0.8pt, dash pattern= on 2 pt off 2.pt] (10.2,6).. controls (8.8,5.05).. (10.6,4.8);
\draw [line width =0.8pt, dash pattern= on 2 pt off 2.pt] (12.2,6).. controls (10.9,4.8).. (12.4,4.4);
\draw [line width =0.8pt, dash pattern= on 2 pt off 2.pt] (11.78,2.41)  .. controls (10.3,4.15)..(12.66,4) ;
\draw [line width =0.8pt, dash pattern= on 2 pt off 2.pt] (8.1,.24)  .. controls (7.4,1.64)..(9.1,0.9) ;
\draw [line width =0.8pt, dash pattern= on 2 pt off 2.pt] (4.79,6.25)  .. controls (6.1,3.4)..(2.71,3.4) ;
\draw [line width =0.8pt, dash pattern= on 2 pt off 2.pt] (2.69,2.7)  .. controls (5.9,3.3)..(5.45,0.87) ;
\begin{scriptsize}
\draw [fill=ududff] (8.731500068580912,3.8176360742560385) circle (2.5pt);
\draw [fill=ududff] (9.805925512791445,3.6699025756770913) circle (2.5pt);
\draw [fill=ududff] (10.450580779317765,4.180254661677091) circle (2.5pt);
\draw [fill=ududff] (10.356568552949343,3.4550174868349863) circle (2.5pt);
\draw [fill=ududff] (11.001223819475662,4.583164203256038) circle (2.5pt);
\draw [fill=ududff] (11.014654137528293,3.8176360742560385) circle (2.5pt);
\draw[color=darkgreen] (10.56,3.85) node {$e_4$};
\draw[color=black] (2.4,5.36) node {$\textrm{cut by $(4)$}$};
\draw[color=black] (3.15,0.87) node {$\textrm{cut by $(4)$}$};
\draw[color=black] (8.85,0.4) node {$\textrm{cut by $(3)$}$};
\draw[color=black] (9,7) node {$\textrm{cut by $(1)$}$};
\draw[color=black] (10.7,5.5) node {$\textrm{cut by $(2)$}$};
\draw[color=black] (12.84,5.21) node {$\textrm{cut by $(2)$}$};
\draw[color=black] (12.55,3.15) node {$\textrm{cut by $(2)$}$};
\draw [fill=ududff] (5.320199283212473,3.857927028413933) circle (2.5pt);
\draw [fill=ududff] (4.514380200054573,4.3011275241507745) circle (2.5pt);
\draw [fill=ududff] (4.030888750159834,4.314557842203406) circle (2.5pt);
\draw [fill=ududff] (4.769556243054574,4.9323524726244585) circle (2.5pt);
\draw [fill=ududff] (4.00402811405457,4.784618974045511) circle (2.5pt);
\draw [fill=ududff] (3.6414095266335154,4.341418478308669) circle (2.5pt);
\draw [fill=ududff] (4.138331294580887,5.402413604466563) circle (2.5pt);
\draw [fill=ududff] (3.466815391949304,5.039795017045511) circle (2.5pt);
\draw [fill=ududff] (4.621822744475626,5.442704558624458) circle (2.5pt);
\draw [fill=ududff] (4.380077019528256,5.966486962677088) circle (2.5pt);
\draw [fill=ududff] (3.1444877586861444,4.663746111571827) circle (2.5pt);
\draw [fill=ududff] (2.9564633059493013,4.126533389466564) circle (2.5pt);
\draw [fill=ududff] (3.3325122114229875,5.55014710304551) circle (2.5pt);
\draw [fill=ududff] (5.306768965159841,2.7700712661507767) circle (2.5pt);
\draw [fill=ududff] (4.514380200054573,2.5686164953613035) circle (2.5pt);
\draw [fill=ududff] (5.,2.) circle (2.5pt);
\draw [fill=ududff] (3.8697249335282535,2.4880345870455143) circle (2.5pt);
\draw [fill=ududff] (4.,2.) circle (2.5pt);
\draw [fill=ududff] (3.466815391949304,1.9239612288349885) circle (2.5pt);
\draw [fill=ududff] (3.936876523791412,1.601633595571831) circle (2.5pt);
\draw [fill=ududff] (3.3459425294756193,1.4001788247823577) circle (2.5pt);
\draw [fill=ududff] (4.48751956394931,1.6284942316770943) circle (2.5pt);
\draw [fill=ududff] (4.836707833317733,1.4404697789402523) circle (2.5pt);
\draw [fill=ududff] (5.226187056844051,1.359887870624463) circle (2.5pt);
\draw [fill=ududff] (6.676661406528269,4.5160126129928795) circle (2.5pt);
\draw [fill=ududff] (6.487293921986163,5.171412133961296) circle (2.5pt);
\draw [fill=ududff] (5.95142423168616,5.55014710304551) circle (2.5pt);
\draw [fill=ududff] (6.75321421942827,5.570292580124454) circle (2.5pt);
\draw [fill=ududff] (6.487293921986163,5.954399676429716) circle (2.5pt);
\draw [fill=ududff] (7.329374863886168,5.9987197260034) circle (2.5pt);
\draw[color=blue] (7.122547965875641,5.6) node {$e_1$};
\draw [fill=ududff] (7.432788312891432,6.456693571598136) circle (2.5pt);
\draw [fill=ududff] (7.802122059338802,6.013493075861295) circle (2.5pt);
\draw [fill=ududff] (8.141909106070383,6.175999924298137) circle (2.5pt);
\draw [fill=ududff] (7.7578020097651175,6.840800667903398) circle (2.5pt);
\draw [fill=ududff] (8.393056053654595,6.574880370461294) circle (2.5pt);
\draw [fill=ududff] (8.599882951665123,6.1464532245823476) circle (2.5pt);
\draw [fill=ududff] (8.186229155644067,4.373651241634983) circle (2.5pt);
\draw [fill=ududff] (8.998763397828283,4.846398437087613) circle (2.5pt);
\draw [fill=ududff] (9.60447074200197,4.536158090071824) circle (2.5pt);
\draw [fill=ududff] (11.511575905475665,5.026364698992879) circle (2.5pt);
\draw[color=darkgreen] (11,4.95) node {$e_3$};
\draw [fill=ududff] (11.430993997159876,4.395139750519196) circle (2.5pt);
\draw [fill=ududff] (11.55186685963356,3.857927028413933) circle (2.5pt);
\draw [fill=ududff] (11.323551452738823,3.508738759045513) circle (2.5pt);
\draw [fill=ududff] (9.441963893565127,5.274825582966559) circle (2.5pt);
\draw[color=darkgreen] (9,5.18) node {$e_2$};
\draw [fill=ududff] (11.578727495738823,3.0252473091507763) circle (2.5pt);
\draw [fill=ududff] (11.833903538738825,3.159550489677092) circle (2.5pt);
\draw [fill=ududff] (11.95477640121251,3.4281568507297235) circle (2.5pt);
\draw [fill=ududff] (11.995067355370404,3.777345120098144) circle (2.5pt);
\draw [fill=ududff] (8.476324025580912,2.8372228564139346) circle (2.5pt);
\draw [fill=ududff] (9.255282472633548,2.6894893578349874) circle (2.5pt);
\draw [fill=ududff] (9.013536747686178,2.2328585440455146) circle (2.5pt);
\draw [fill=ududff] (9.671622332265128,2.1925675898876196) circle (2.5pt);
\draw[color=red] (9.405030518920391,2.4) node {$e_5$};
\draw [fill=ududff] (9.631331378107234,1.6419245497297257) circle (2.5pt);
\draw [fill=ududff] (10.141683464107237,1.8971005927297253) circle (2.5pt);
\draw [fill=ududff] (10.705756822317765,1.910530910782357) circle (2.5pt);
\draw [fill=ududff] (10.1685441002125,1.3195969164665684) circle (2.5pt);
\draw [fill=ududff] (10.732617458423029,1.5076213692034102) circle (2.5pt);
\draw [fill=ududff] (11.202678590265137,1.8299490024665677) circle (2.5pt);
\draw [fill=ududff] (11.202678590265137,1.4673304150455155) circle (2.5pt);
\draw [fill=ududff] (10.692326504265134,1.0106996012560423) circle (2.5pt);
\draw [fill=ududff] (11.175817954159873,0.9838389651507793) circle (2.5pt);
\draw [fill=ududff] (11.592157813791454,1.3464575525718314) circle (2.5pt);
\draw [fill=ududff] (11.766751948475667,0.9838389651507793) circle (2.5pt);
\draw [fill=ududff] (11.941346083159878,1.480760733098147) circle (2.5pt);
\draw [fill=ududff] (7.054165082791432,2.4000101343086726) circle (2.5pt);
\draw [fill=ududff] (7.401898581370379,1.6419245497297257) circle (2.5pt);
\draw [fill=ududff] (7.133292220317745,1.0644208734665686) circle (2.5pt);
\draw [fill=ududff] (7.89882034931775,1.1987240539928843) circle (2.5pt);
\draw[color=red] (7.72,1.6) node {$e_6$};
\draw [fill=ududff] (8.113705438159856,0.7018022860455165) circle (2.5pt);
\draw [fill=ududff] (8.5569059338967,0.9435480109928847) circle (2.5pt);
\draw[color=ududff] (5.6,3.3) node {\large$C\setminus \hat{C}$};
\draw[color=ududff] (7.29,3.3) node {\large$\hat{C}$};
\draw [fill=ududff] (12.12937053589672,5.106946607308668) circle (2.5pt);
\draw [fill=ududff] (11.95477640121251,5.442704558624458) circle (2.5pt);
\draw [fill=ududff] (9.97380448844934,5.585065929982348) circle (2.5pt);
\draw [fill=ududff] (10.00335118816513,5.156638784103402) circle (2.5pt);
\draw [fill=ududff] (12.0353583095283,4.757758337940248) circle (2.5pt);
\end{scriptsize}
\end{tikzpicture}
    \caption{In this example, the edge $\color{blue}{e_1}$ receives a colour (blue) in $Z$ and so shares a colour with an edge in $\hat{C}$; the edges $\color{darkgreen} e_2$, $\color{darkgreen} e_3$ and $\color{darkgreen}e_4$ all receive the same colour (green); and the edges $\color{red}e_5$ and $\color{red} e_6$ receive the same colour (red), which does not appear elsewhere in $F$, and $B_{\color{red}e_5}\geq B_{\color{red}e_6}$.}
    \label{fig: cut forest}
\end{figure}

Let $\hat{F}$ be the remaining forest. Clearly by construction, $\hat{C} \cup \hat{F}$ is connected and rainbow coloured.

Let us write $X_*$ for the number of vertices we delete in step  $(*)$ for $* \in \{1,2,3,4\}$, respectively. Our aim will be to show that $\sum_{i=1}^4 X_i$ is negligible, and so the majority of vertices in $C$ lie in $\hat{C} \cup \hat{F}$.

We begin with some observations which will be required for the cases when
$*=$ \ref{i:colshared} or \ref{i:colthree}, for which we define $E_*$ to be the set of edges whose branch is deleted in step~$(*)$. Then clearly
\[
X_* \leq \sum_{e \in E_*} B_e.
\]
Furthermore, since the colouring $\chi$ is chosen independently of the forest $F$, we can see that
$\mathbb{E}(X_*) \leq \mathbb{E}_{\chi}( |E_*|) \cdot \mathbb{E}_{F,e}(B_e)$,
where $e$ is an edge of $F$ chosen uniformly at random.
By \Cref{claim:expbridgenumberforest}, $\mathbb{E}_{F,e}(B_e) \leq \frac{m}{t+1} = O\left(\frac{1}{\eps}\right)$,
so we have
\begin{equation}\label{eq:expXstar}
    \mathbb{E}(X_*)= O\left(\frac{1}{\eps}\right)\cdot \mathbb{E}_\chi(|E_*|).
\end{equation}

\subsection*{Colours appearing in $\hat C$ and $F$: Case $*=$ \ref{i:colshared}.}

We have
\begin{align*}
\mathbb{E}_{\chi}\left( \left|E_{\ref{i:colshared}}\right|\right) &= \sum_{i \in Z} \sum_{k \geq 1} k\cdot \mathbb{P}(\text{colour } i \text{ appears } k \text{ times in } F)\\
&\leq |Z| \sum_{k \geq 1} k \binom{m-t}{k} \frac{1}{c^k} \left(1 - \frac{1}{c}\right)^{m-t - k}\\
&\leq \Theta\left(\eps^2 n\right) \sum_{k \geq 1}  k\left(\frac{m}{c}\right)^k \\
&\leq \Theta\left(\eps^2 n\right) \sum_{k \geq 1}  k\left(\frac{3 \eps}{\alpha}\right)^k 
= O\left(\eps^3 n\right),
\end{align*}
and so $\mathbb{E}(X_{\ref{i:colshared}}) = O\left(\eps^2 n\right)$
by~\eqref{eq:expXstar}.

\subsection*{High-frequency colours: Case $*=$ \ref{i:colthree}.}

Similarly we have
\begin{align*}
\mathbb{E}_{\chi}\left( \left|E_{\ref{i:colthree}}\right|\right) &= \sum_{i \in [c]} \sum_{k \geq 3} k \cdot \mathbb{P}(\text{colour } i \text{ appears } k \text{ times in } F)\\
&=c \sum_{k \geq 3} k \binom{m-t}{k} \frac{1}{c^k} \left(1 - \frac{1}{c}\right)^{m-t - k}\\
&\leq c \sum_{k \geq 3} k \left( \frac{m}{c}\right)^k
= O\left(\eps^3 n\right),
\end{align*}
and so $\mathbb{E}\left(X_{\ref{i:colthree}}\right) = O\left(\eps^2 n\right)$ by~\eqref{eq:expXstar}.

\subsection*{Double colours: Case $*=$ \ref{i:coltwo}.}

Next, let $A_{\ref{i:coltwo}}$
denote the set of \emph{pairs} of distinct edges $e,f \in E(F)$ which share a colour which only appears on these two edges.
Then we have that
\begin{align*}
\mathbb{E}\left(X_{\ref{i:coltwo}}\right) \leq \mathbb{E}_{\chi}\left( \left|A_{\ref{i:coltwo}}\right|\right) \cdot \mathbb{E}_{F,e,f}\left(\min\left\{ B_e,B_f\right\}\right).
\end{align*}
Furthermore, by \Cref{prop:expminbnforest}
\[
\mathbb{E}_{F,e,f}\left(\min\left\{ B_e,B_f\right\}\right) = o\left(\frac{m}{t}\right) = o\left( \frac{1}{\eps}\right)
\]
and 
\begin{align*}
    \mathbb{E}_{\chi}\left( \left|A_{\ref{i:coltwo}}\right|\right) &= c \binom{m-t}{2}\frac{1}{c^2} \left( 1- \frac{1}{c}\right)^{m-t - 2} = \Theta\left(\eps^2 n\right).
\end{align*}
Hence,
$\mathbb{E}(X_{\ref{i:coltwo}}) = o\left( \frac{1}{\eps}\right)\Theta\left(\eps^2 n\right) =o(\eps n)$.

\subsection*{Trees not attached to $\hat C$: Case $*=$ \ref{i:notinhatC}.}
Finally note that, by \Cref{rem:expectedtreesize}\eqref{r:expectedtreesize}, $\mathbb{E}_{F,x}(|V(T_x)|) = \frac{m}{t}$, where $x$ is a uniformly chosen random vertex from $[t]$, and so
\[
\mathbb{E}(X_{\ref{i:notinhatC}}) \leq \Big(|V(C)| - |V(\hat{C})|\Big) \frac{m}{t} = o\left(\eps^2 n \right) \frac{(2+o(1))\eps n}{\Theta\left(\eps^2 n\right)}= o(\eps n).
\]

\vspace{0.5cm}

Collecting all of the four cases, we have shown that $\mathbb{E}\left(\sum_{i=1}^4 X_i\right) = \sum_{i=1}^4 \mathbb{E}(X_i) = o(\eps n)$.
In particular, by Markov's inequality, whp
$\sum_{i=1}^4 X_i = o(\eps n)$. 
Hence, whp $\hat{C} \cup \hat{F}$ is a connected, rainbow subgraph of $G_c(n,p)$ of order at least
\[
|L \cup U| - \sum_{i=1}^4 X_i = (1+o(1))2\eps n - o(\eps n) = (1+o(1))2\eps n,
\]
as required.
\end{proof}

\section{Sparse regime}\label{s:sparse}

In this section we consider the regime when $p=d/n$. In the case of the subcritical regime, where $d<1$, as mentioned at the end of \Cref{s:weaklysub}, the proof of \Cref{thm:giantrainbowtree}(\ref{i:sub}) will show that  whp the largest rainbow tree has approximately the same order as the largest component. So, in what follows we will focus on the supercritical regime, where $d>1$.

First, we consider the case where $p$ is only just above the phase transition threshold of $1/n$, and prove \Cref{t:lineartreesparse} by comparing a rainbow breadth-first search exploration process on $G_c(n,p)$ to a branching process, i.e.\ we show that when $c= \alpha n$ and $p = \frac{1+\eps}{n}$ for constant $\alpha$ and for $\eps$ sufficiently small, the largest rainbow tree will have order $\Theta(\eps n)$.
However, given that the likely order of the giant component in $G(n,p)$ is $\left(2\eps + O\left(\eps^2\right)\right)n$, we conjecture that this can be sharpened.

\begin{conjecture}
Let $\alpha> 0$, let $c=\alpha n$, let $\eps >0$ be sufficiently small and let $p = \frac{1+\eps}{n}$. Then whp the largest rainbow tree in $G_c(n,p)$ has order $\left(2\eps + O\left(\eps^2\right)\right)n$.
\end{conjecture}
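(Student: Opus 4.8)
\medskip
\noindent\emph{A possible approach.}
The upper bound is immediate: by \Cref{t:giantsparse}, whp every component of $G(n,p)$ has order at most $(\gamma(1+\eps)+o(1))n = (2\eps+O(\eps^2))n$, and any rainbow tree lies inside a single component. For the lower bound the natural plan is to adapt the argument of \Cref{thm:giantrainbowtree}(\ref{i:sup}) to the constant-$\eps$ regime. The structural inputs used there -- \Cref{lem:coreproperties}, \Cref{lem:compstructure} and \Cref{lem:Luczak} -- all have well-known analogues when $\eps>0$ is a small constant: the $2$-core $C$ of $G(n,p)$ has order and size $\Theta(\eps^2 n)$ with $\Theta(\eps^3 n)$ vertices of degree at least $3$, it is conditionally uniform given its degree sequence, and $G(n,p)[L\cup U]-E(C)$ is a uniform random forest $F=F(m,t)$ with $m=(2+o(1))\eps n$ and $t=\Theta(\eps^2 n)$. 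One would first reveal $C$, delete from it the $o(\eps^3 n)$ edges whose colour is repeated inside $C$, and invoke the constant-$\eps$ analogue of \Cref{lem:Luczak} to obtain a rainbow component $\hat C\subseteq C$ of order $(1-o(1))|C|$; one would then reattach pendant trees of $F$ using the four deletion rules of \Cref{sec:outline} (discard the branch of an edge whose colour clashes with $\hat C$, appears at least three times in $F$, or is the better of a double-coloured pair, and discard the trees not rooted in $\hat C$). Writing $X_1,\dots,X_4$ for the vertices lost in the four steps, the goal is $\sum_i X_i = O(\eps^2 n)$ whp, and the estimates for $X_1,X_2$ from the weakly supercritical proof already give $X_1,X_2=O(\eps^2 n)$.

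The difficulty is that, with $\eps$ constant, the remaining bounds of that proof give only $X_3,X_4 = o(\eps n)$, which is too weak. For $X_4 \le (|C|-|\hat C|)\cdot m/t = (|C|-|\hat C|)\cdot O(1/\eps)$ one needs the quantitative statement $|C|-|\hat C| = O(\eps^3 n)$, i.e.\ that deleting $o(\eps^3 n)$ edges from $C$ disconnects only $O(\eps^3 n)$ vertices from its giant part. This should follow from analysing the \emph{kernel} of $C$ (suppress the degree-$2$ vertices), which is essentially a random multigraph on $\Theta(\eps^3 n)$ vertices with $\Theta(\eps^3 n)$ edges and cyclomatic number $\Theta(\eps^3 n)$: all but $o(\eps^3 n)$ of the deleted edges then lie on cycles, and the total size of the pieces cut off by the remaining bridge-type deletions can be bounded by $O(\eps^3 n)$. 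I expect this step to be routine but technical.

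The main obstacle is controlling $X_3$, the loss from double-coloured pairs. Re-examining \Cref{prop:expminbnforest} when $t=\Theta(\eps m)$ rather than $t=o(m)$: the component sizes of $F(m,t)$ are then asymptotically $\mathrm{Borel}(1-\Theta(\eps))$ distributed, with a $k^{-3/2}$ tail cut off at $k=\Theta(\eps^{-2})$, and one finds $\mathbb{E}_{F,e,e'}(\min\{B_e,B_{e'}\}) = \Theta(\log(1/\eps))$ rather than $o(1/\eps)$. Since there are $\Theta(\eps^2 n)$ double-coloured pairs, the rule ``delete the better edge'' costs $\mathbb{E}(X_3) = \Theta(\eps^2\log(1/\eps)\,n)$, exceeding the target $O(\eps^2 n)$ by a logarithmic factor. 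As this rule is already optimal \emph{colour by colour}, removing the $\log$ seems to require exploiting the global geometry of $F$ -- for instance, showing that the colours whose cheaper edge is ``deep'' (branch size $\gg 1/\eps$) all sit inside a bounded number of large trees and can be handled with a shared rather than additive cost -- or else replacing the approach by a more global, multi-round-exposure analogue of the emergence of the giant. I would expect this $\log$-removal to be the crux of a proof. Short of it, the plan above should still yield unconditionally that whp the largest rainbow tree has order at least $(2\eps - O(\eps^2\log(1/\eps)))n$, already improving on \Cref{t:lineartreesparse}.
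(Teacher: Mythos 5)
This is stated as a \emph{conjecture} in the paper, not a theorem: the authors explicitly say ``We show only the weaker lower bound in \Cref{t:lineartreesparse}'', and the discussion in \Cref{s:discussion} confirms that the constant-$\eps$ regime remains open. So there is no ``paper's own proof'' to compare against, and your write-up --- which does not claim to close the gap, but rather diagnoses why the natural adaptation of \Cref{thm:giantrainbowtree}(\ref{i:sup}) falls short --- is the appropriate kind of answer here.

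Your diagnosis looks right to me. The upper bound from \Cref{t:giantsparse} is correct and trivial. For the lower bound, the two ``easy'' cases do transfer: the estimates in the paper give $\mathbb{E}(X_1), \mathbb{E}(X_2) = O(\eps^2 n)$ with constants that are uniform for $\eps$ below a fixed threshold, since the relevant geometric sums converge once $m/c = O(\eps)$ is small. You are also right that $X_4$ requires a genuine strengthening: the weakly supercritical argument only produces $|C|-|\hat C| = o(\eps^2 n)$, which together with the $O(1/\eps)$ tree-size bound gives $o(\eps n)$, not $O(\eps^2 n)$. Getting $|C|-|\hat C| = O(\eps^3 n)$ (or better) would indeed require a more quantitative version of \Cref{lem:Luczak} --- you sketch a kernel-based route, and some care is needed there because the deleted edge set, while of expected size $\Theta(\eps^4 n)$, must whp not fragment the kernel beyond a negligible amount; a Chernoff/second-moment argument rather than Markov would be needed if you want $O(\eps^4 n)$ whp rather than merely $o(\eps^3 n)$.

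The heart of your analysis is the $X_3$ bottleneck, and I agree with your calculation: with $t = \Theta(\eps m)$, the root-tree sizes in $F(m,t)$ have a $k^{-3/2}$ density truncated at $k \asymp \eps^{-2}$, so $\mathbb{P}(\text{size} \geq k) \asymp k^{-1/2}$ in that range, and $\mathbb{E}\bigl(\min\{B_e, B_{e'}\}\bigr) \asymp \sum_{k \lesssim \eps^{-2}} k^{-1} = \Theta(\log(1/\eps))$. Multiplying by $\Theta(\eps^2 n)$ double-coloured pairs yields $\mathbb{E}(X_3) = \Theta(\eps^2 \log(1/\eps)\, n)$, which overshoots. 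You correctly observe that the per-colour deletion rule is already optimal, so removing the logarithm needs a new idea; your suggested conclusion that this approach nevertheless gives whp a rainbow tree of order at least $\bigl(2\eps - O(\eps^2 \log(1/\eps))\bigr)n$ is plausible and would indeed be a real improvement over \Cref{t:lineartreesparse} (whose leading coefficient is $\frac{\alpha}{\alpha+1} < 1 < 2$). To claim even that weaker bound rigorously you would still need to verify uniformity in $\eps$ of the constant-$\eps$ versions of \Cref{lem:coreproperties}, \Cref{lem:compstructure} and \Cref{lem:Luczak}, and to make the $X_4$ step quantitative; none of this is routine, but it seems like a reasonable research direction.
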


We show only the weaker lower bound in \Cref{t:lineartreesparse}.

\begin{proof}[Proof of \Cref{t:lineartreesparse}]
We will describe a rainbow breadth-first search (RBFS) exploration process
which, early on its evolution, can be coupled with a binomial branching process.

More precisely, we run a RBFS process on $G_c(n,p)$, starting at an arbitrary root vertex,
where whenever we have more than $(1-\delta)n$ undiscovered vertices in each step,
we arbitrarily forbid some vertices to ensure that precisely $(1-\delta)n$ remain --
this slightly counter-intuitive strategy is for technical convenience later on.
Furthermore, whenever we discover an edge, we only accept the edge if it does not share any colours with the previously accepted edges, and in addition, if the probability
of rejection is less than $\delta/\alpha$, we introduce an additional
edge-rejection probability to ensure that the total probability is always precisely $\delta/\alpha$. 
Again, note that we are counter-intuitively rejecting edges that we could keep,
but this convention will be more convenient.
Each time the queue of active vertices is empty, we choose an arbitrary unexplored vertex as the new root. Observe that this process builds a rainbow forest.

Under the assumption that the forest has not yet grown to order $\delta n$, we see that the probability that a newly discovered edge shares a colour with the current forest is at most $\frac{\delta}{\alpha}$, and for each vertex we will seek neighbours from among
at least $(1- \delta)n$ vertices, and so,
due to the counter-intuitive additional restrictions described above,
each component in this process can be coupled with a Bin$(m,q)$ branching process
where $m=(1-\delta)n$ and $q=(1-\delta/\alpha)p$, so we have
\[
mq = (1-\delta)\left(1-\frac{\delta}{\alpha}\right)(1+\eps).
\]
Let us set $\delta:= \frac{\alpha}{\alpha+1}\eps -\eps^2$.
It is easy to check that $mq > 1 + \Omega\left(\eps^2\right)$, and so this branching process is supercritical.

At this point, standard arguments imply that if we explore the component structure of $G_c(n,p)$ via this RBFS process, whp we will discover a rainbow tree of order at least $\delta n - \eps^2 n= \left(\frac{\alpha}{\alpha+1}\eps - O\left(\eps^2\right)\right)n$.

More precisely, we run the RBFS process until one of the following two stopping conditions
is reached.
\begin{enumerate}[(S1)]
    \item \label{stop:large} We have discovered a rainbow tree of order at least $\delta n - \eps^2 n$.
    \item \label{stop:time} The queue of active vertices is empty and the total size of the
    forest is at least $\eps^2 n$.
\end{enumerate}
Note that the coupling is indeed valid as long as neither of these conditions
has been applied. We claim that whp Condition~(S\ref{stop:time}) will not be invoked.

To see why this is true, observe that each time we start a new tree, it has probability
$\Omega\left(\eps^2\right)$ of reaching order $\delta n - \eps^2 n$
and therefore triggering Condition~(S\ref{stop:large}) (roughly corresponding to the survival probability of the corresponding branching process),
independently for each tree.

Furthermore, conditioned on a tree \emph{not} reaching order
$\delta n- \eps^2 n$
(and in particular certainly not surviving forever)
the RBFS while discovering this tree behaves approximately
like a \emph{subcritical} branching process with growth rate
$1-\Omega\left(\eps^2\right)$.
The expected order of such a branching process is $O\left(\eps^{-2}\right)$,
and so for any $\omega\to \infty$, the (still conditional)
probability that the first $\omega$ such processes have total order $\omega^2 \eps^{-2}$ is $o(1)$ by Markov's inequality.
Choosing $\omega$ to grow slowly enough that $\omega^2 \eps^{-2} \le \eps^2 n$,
we deduce that whp we would have to start at least $\omega$ processes in order
for Condition~(S\ref{stop:time}) to be invoked. But since $\omega\to \infty$
and each process has probability $\Omega\left(\eps^2\right)$ of growing large,
whp one of the first $\omega$ processes will indeed grow large.
\end{proof}

In order to prove \Cref{t:almostspanningcycle}, we will first show the likely existence of an almost spanning path, and then complete the proof via a standard sprinkling argument. In order to show the existence of a long rainbow path we will use a rainbow depth-first search process, following a technique of Krivelevich and Sudakov \cite{KS13}.

\begin{theorem}\label{t:almostspanningpath}
Let $\delta,\alpha >0$ and let $c=\alpha n$, let $d$ be such that $d \geq \frac{16}{\delta^3}$, and let $p=\frac{d}{n}$. Then whp $G_c\left(n,p\right)$ contains a rainbow path of length $(1-\delta)\min\{ c,n\}$.
\end{theorem}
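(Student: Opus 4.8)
The plan is to run a \emph{rainbow depth-first search} (RDFS) on $G_c(n,p)$, adapting the classical argument of Krivelevich and Sudakov. Recall that in a standard DFS we maintain three sets of vertices: $S$ (fully explored), $T$ (currently on the stack, inducing a path), and $U$ (untouched). At each step we look at the top vertex $v$ of the stack and query, one at a time, the pairs $\{v,u\}$ with $u \in U$: if an edge is present we move $u$ onto the stack, otherwise we discard the pair (permanently) and continue; if all such pairs are absent, we pop $v$ from $T$ into $S$. The crucial feature is that the DFS path $T$ at any moment is an induced path in the part of the graph explored so far. In the \emph{rainbow} version we additionally keep a palette $P$ of colours already used on edges of the current path $T$; when we query a pair $\{v,u\}$, we only \emph{accept} the edge if it is present \emph{and} its colour lies outside $P$ (and whenever we pop a vertex we remove the corresponding colour from $P$, so that $|P| = |T|-1$ at all times). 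Thus $T$ is always a rainbow induced path, and proving the theorem amounts to showing that at some point during the process $|T| \geq (1-\delta)\min\{c,n\}$.

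Next I would set up the coupling with a sequence of independent Bernoulli trials. The key observation, exactly as in Krivelevich--Sudakov, is that each pair of vertices is queried at most once, and conditioned on everything revealed so far, the probability that a queried pair yields an \emph{accepted} edge is, independently, at least $p(1 - |P|/c) \geq p(1 - \min\{c,n\}/c)$\,---\,no, more carefully: as long as $|T| \leq (1-\delta)\min\{c,n\}$ we have $|P| = |T| - 1 \leq (1-\delta)c$, so the colour is outside $P$ with probability at least $\delta$, and hence each query succeeds independently with probability at least $q := \delta p = \delta d/n$. Run the process for exactly $N := \lceil n/(\delta^2 p)\rceil \cdot$ (something) queries\,---\,more precisely, stop once either $|T| \geq (1-\delta)\min\{c,n\}$ or $|S| \geq \delta n/2$ or $U = \emptyset$. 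Let $X \sim \mathrm{Bin}(m, q)$ stochastically \emph{lower-bound} the number of accepted edges among the first $m$ queries while the coupling is valid. Now I would argue by contradiction: suppose the RDFS never builds a rainbow path of length $(1-\delta)\min\{c,n\}$. Then throughout the process $|T| < (1-\delta)\min\{c,n\}$, so the coupling with $\mathrm{Bin}(\cdot, q)$ remains valid; and whenever neither $S$ nor $U$ is "large" (say both have size $< \delta n / 2$\,---\,note $|S| + |T| + |U| = n$, so if $|T|$ is not large then one of $|S|, |U|$ must exceed $(n - (1-\delta)\min\{c,n\})/2 \geq \delta n / 2$ in the relevant range, which already gives the structural dichotomy), at least $\delta n / 2$ queries have been made. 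Counting edges: the number of accepted edges equals $|S| + |T| - (\text{number of tree components started})$, and each time $S$ grows by one vertex we have performed at least $|U| \geq \delta n/2$ unsuccessful-or-colour-blocked queries against it (during the step when $v$ was popped all $U$-pairs failed). So if $|S| \geq \delta n / 2$ then the total number of queries is $\geq (\delta n/2)^2 = \delta^2 n^2/4$, while the number of accepted edges is at most $n$. But $\mathbb{E}(X) = q \cdot \delta^2 n^2 / 4 = \delta^3 d n / 4 \geq 4n$ by the hypothesis $d \geq 16/\delta^3$, and by the Chernoff bound (\Cref{lem:chernoff}) whp $X \geq \delta^3 d n / 8 \geq 2n > n$, a contradiction. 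Hence whp the RDFS produces a rainbow path of length at least $(1-\delta)\min\{c,n\}$.

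The main obstacle, and the part needing the most care, is getting the bookkeeping of the stopping rules and the edge/query count exactly right\,---\,in particular making the dichotomy "either $T$ is long, or else $S$ and $U$ are both so large that we have made quadratically many queries" precise in the regime $c < n$ (where the target length is $(1-\delta)c$ rather than $(1-\delta)n$), since then $|T|$ can be small even when the graph is fully explored, so one must ensure the colour-blocking probability bound $\geq \delta$ genuinely holds up to the stopping time and that $|U|$ stays $\Omega(n)$ long enough. A secondary technical point is verifying that the independence needed for the Chernoff bound survives the conditioning inherent in DFS\,---\,this is handled, as in Krivelevich--Sudakov, by noting that each unordered pair is queried at most once and by revealing the edge-indicator and the (uniform) colour of a potential edge only at the moment of query, so the accept-indicators of distinct queries are genuinely independent with success probability $\geq q$. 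Modulo these care points, everything else is a routine Chernoff estimate.
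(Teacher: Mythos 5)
Your proof is essentially the same argument as the paper's. Both run a rainbow DFS, couple the accept-indicators of queries with independent $\mathrm{Ber}(\delta p)$ variables (valid so long as the rainbow path has length $<(1-\delta)\min\{c,n\}$, which keeps the blocked-colour fraction below $1-\delta$), and then derive a contradiction from a Chernoff lower bound on the number of accepted queries after $\Theta(\delta^2 \min\{c,n\}\, n)$ queries have been made.

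The only difference is in the final bookkeeping of the contradiction. You show that, under the assumption that the stack never reaches length $(1-\delta)\min\{c,n\}$, the set $S$ of popped vertices must reach size $\delta n/2$, at which point at least $\delta^2 n^2/4$ queries have been issued; Chernoff then gives at least $2n > n$ acceptances whp, contradicting the trivial upper bound of $n$ on the total number of acceptances. The paper instead fixes $N = \delta^2 rn/8$ queries with $r=\min\{c,n\}$, shows $|W|\le \delta r/2$ by the same counting, gets at least $r$ accepted queries whp, and thereby concludes $|A|\ge r-\delta r/2 > (1-\delta)r$, contradicting the assumption directly. These reach the same conclusion with the same hypothesis $d\ge 16/\delta^3$.

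Two minor points. First, the claim that "the accept-indicators of distinct queries are genuinely independent with success probability $\ge q$" is not quite accurate: the acceptance probability of a given query depends on the palette $L_v$ and hence on the history. What is true, and what both you and the paper need, is that the conditional success probability given the history is always $\ge q$, which gives stochastic domination by $\mathrm{Bin}(m,q)$ — this is what Chernoff is then applied to. Second, the wording around the "structural dichotomy" is somewhat tangled (the parenthetical and the clause it interrupts do not quite fit together), and "during the step when $v$ was popped all $U$-pairs failed" should more carefully say that by the time $v$ is popped, all pairs $\{v,u\}$ with $u$ in the then-current $U$ have been queried and rejected, possibly over several passes at the top of the stack. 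Neither issue affects the correctness of the argument.
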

\begin{proof}
We consider the following rainbow depth-first search (RDFS) process on a graph: We maintain a stack $A$ of \emph{active} vertices as well as collections $W$ and $U$ of \emph{visited} and \emph{unvisited} vertices. At each step of the process $A=(v_1,v_2,\ldots, v_k)$ will span a rainbow path $P_{v_k}$ in $G_c(n,p)$.

In a step, we consider the vertex $v$ at the top of the stack and
for each previously unqueried pair $vu$ with $u\in U$ in some arbitrary order
we \emph{query} the pair to discover whether it forms an edge in $G(n,p)$, and if so what its colour is.
We accept the query if the pair forms an edge whose colour does
not appear in $P_{v}$ -- in this case the vertex $u$
is moved from $U$ to the top of the stack $A$ and we continue querying
from $u$ -- otherwise the query is rejected and we continue querying from $v$.
If all pairs $vu$ have been queried, we move $v$ from $A$ to $W$. If the stack becomes empty, an arbitrary vertex from $U$ is moved into $A$.

Note that, for any fixed vertex $v$, each time $v$ is the top vertex in the stack, the path $P_v$ will be the same. Let us write $L_v$ for the set of colours used in $P_v$.
Note that, for any vertex $w \in W$, every edge from $w$ to $U$ has been exposed, and those that are in $G(n,p)$ must have a colour in $L_w$.

By the principle of deferred decisions, we may
alternatively describe this RDFS process by first picking an independent sequence of pairs of random variables $(X_i,Y_i)_{i \in \mathbb{N}}$ where $X_i \sim \text{Ber}(p)$ and $Y_i$ is uniformly distributed amongst the set of $c$ colours independently of $X_i$,
and when we run our algorithm, when we query the $i$-th pair we reject the query if and only if $X_i = 0$ or $Y_i \in L_v$ (where $v$ is the top vertex in the stack at this point).

Let us fix $r = \min \{c,n\}$ and suppose we run our process until $N = \frac{\delta^2 rn}{8}$ pairs have been queried and that at no point in this process was $|A| \geq (1-\delta)r$. We first claim that at this point there are at most $\frac{\delta r}{2}$ vertices in~$W$. Indeed, if not, there is some point after $N' \leq N$ queries at which $|W| = \frac{\delta r}{2}$ and hence, since $|A| \leq (1-\delta)r$ by assumption, at this point $|U| \geq \frac{\delta n}{2}$, since $|W| + |U| + |A| = n \geq r$. However, in this case we must have already queried at least $|U||W| = \frac{\delta^2 r n}{4} > N$ many pairs, a contradiction.

Hence, we may assume that $|W| \leq \frac{\delta r}{2}$, and so in particular, since $|A| \leq (1-\delta)r$ and $r \leq n$ we are still in the process of exploring the graph, and each query that was not rejected moved a vertex from $U$ to $A$. However, by assumption $|L_v| = |P_v| \leq (1-\delta)r$ for each $v \in W$. Hence, the probability that each query is accepted is at least $p\left( 1-\frac{ (1-\delta)r}{c}\right) \geq \delta p$, and so by the Chernoff bound (\Cref{lem:chernoff}), whp the number of queries in the first $N$ which are accepted is at least
\[
\frac{1}{2} N \delta p = \frac{d \delta^3r}{16} \geq r,
\]
since $d \geq \frac{16}{\delta^3}$. In particular, $|A \cup W| \geq r$, and so $|A| \geq r - |W| \ge \left(1- \frac{\delta}{2}\right)r \geq (1-\delta)r$, a contradiction.
\end{proof}

It is easy to conclude the existence of long rainbow cycles in this model from the existence of long rainbow paths via a simple sprinkling argument.

\begin{proof}[Proof of \Cref{t:almostspanningcycle}]
Let $d$ be such that $d-1 \geq \frac{128}{\delta^3}$. Let $p = \frac{d}{n}$, let $p_1 = \frac{d-1}{n}$ and let $(1-p_2)(1-p_1) = (1-p)$, noting that $p_2 \geq \frac{1}{n}$. 

We generate $G(n,p)$ by taking two independent random graphs $G_1\sim G(n,p_1)$ and $G_2\sim G(n,p_2)$ and letting $G = G_1 \cup G_2$, so that $G \sim G(n,p)$. If we then uniformly colour the edges of $G$ from a set of $c$ colours, then it is clear that the resulting coloured graph is distributed as $G_c(n,p)$ and furthermore that the induced colouring on the graph $G_1$ is distributed as $G_c(n,p_1)$.

Letting $r = \min\{c,n\}$, then by \Cref{t:almostspanningpath}
with $\delta/2$ in place of $\delta$ and $d-1$ in place of $d$,
whp $G_1$ contains a
rainbow path $P$ of length $\left(1-\frac{\delta}{2}\right)r$. At this point we aim to use a sprinkling argument to close $P$ to a rainbow cycle of
almost the same length by finding an edge of $G_2$
between vertices close to the ends of $P$. However, we need to be careful
since if this is also an edge of $G_1$, then we may
already have revealed its colour. We therefore first show that
there are not too many such edges.

To this end, we note that for any pair of disjoint sets $X$ and $Y$ of size 
$\frac{\delta}{4}n$ the expected number of edges between $X$ and $Y$ in $G_1$
is $\frac{\delta^2(d-1)}{16}n$ and so, by the Chernoff bound (\Cref{lem:chernoff}),
with probability $\exp \left( - \Omega\left(n^2\right) \right)$
there are at most $n^{\frac{3}{2}}$ edges between $X$ and $Y$, 
and so by a union bound whp there are at most
$n^{\frac{3}{2}}$ edges of $G_1$ between any two such sets $X,Y$.

In particular, if we let $X$ and $Y$ be the first and last $\frac{\delta}{4}r \le \frac{\delta}{4}n$ vertices on $P$, then whp there are at most $n^{\frac{3}{2}}$ edges of $G_1$ between $X$ and $Y$.
Hence, when we expose $G_2$, it is again a simple consequence of the Chernoff bound (\Cref{lem:chernoff}) that whp there are at least
\[
\frac{1}{2} \left( |X||Y| - n^{\frac{3}{2}} \right) p_2 \geq \frac{ \delta^2 r^2}{64n}
\]
edges between $X$ and $Y$ in $G_2$ which are not in $G_1$. Furthermore, for each of these edges the probability that it uses a colour not in $P$ is at least $\frac{\delta}{2}$ and so
with probability at least
\[
1-\left(1-\frac{\delta}{2}\right)^{\frac{ \delta^2 r^2}{64n}}
\ge 1-\exp\left(-\frac{\delta^3 r^2}{128n}\right) = 1-o(1),
\]
there is at least one edge $e$ in $G_2$ between $X$ and $Y$ which uses a colour not used in $P$.
It follows that there is a rainbow cycle $C \subseteq P+e \subseteq G$ of length at least $\left(1- \frac{\delta}{2} - 2 \frac{\delta}{4}\right)r = (1-\delta)r$.
\end{proof}

A similar argument shows the existence of a linear rainbow cycle when $p = \frac{d}{n}$ for arbitrary $d >1$.

\begin{theorem}\label{t:longpath}
Let $\alpha >0$, let $c=\alpha n$, let $\eps >0$ be sufficiently small and let $p= \frac{1+2\eps}{n}$. Then whp $G_c(n,p)$ contains a rainbow cycle of length $\Omega\left( \eps^2 n\right)$.
\end{theorem}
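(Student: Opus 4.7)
The plan is to mirror the proof of \Cref{t:almostspanningcycle}: find a long rainbow path and then close it to a cycle by sprinkling. The key difference is that here we must produce the path in the weakly supercritical regime. Set $p_1 := (1+\eps)/n$ and let $p_2$ satisfy $(1-p_1)(1-p_2) = 1-p$, so that $p_2 = \Theta(\eps/n)$. Generate $G_c(n, p)$ as $G_1 \cup G_2$ with $G_i \sim G(n, p_i)$ independent and edges uniformly coloured from $[\alpha n]$. It then suffices to find whp a rainbow path of length $\Omega(\eps^2 n)$ in $G_c(n, p_1)$ and close it to a rainbow cycle using edges of $G_2$.

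For the path, I would run a rainbow depth-first search on $G_c(n, p_1)$ as in the proof of \Cref{t:almostspanningpath}, stopping as soon as the active stack $A$ first reaches size $\eps^2 n / K$ for a sufficiently large constant $K$. Before the stopping time the current path has length at most $\eps^2 n/K$, so each discovered $G_1$-edge is additionally rejected due to a colour clash with probability at most $\eps^2/(K \alpha)$. This is much smaller than $\eps$, so the effective edge-retention probability is $(1 - o(\eps)) p_1$, corresponding to a binomial branching process with offspring mean $1 + \eps - o(\eps)$ which is still supercritical. Applying the Krivelevich--Sudakov DFS analysis \cite{KS13} for $G(n, (1+\eps')/n)$ to this perturbed process yields that whp the active stack reaches size $\Omega(\eps^2 n)$, giving a rainbow path $P$ of this length.

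To close $P$ into a cycle, I would follow the sprinkling argument in the proof of \Cref{t:almostspanningcycle}. Let $X$ and $Y$ be the first and last $|P|/4 = \Omega(\eps^2 n)$ vertices of $P$. By \Cref{lem:chernoff}, the number of $G_2$-edges between $X$ and $Y$ is whp $\Omega(|X||Y| p_2) = \Omega(\eps^5 n)$, which tends to infinity since $\eps$ is a constant. Almost all such edges avoid $G_1$ (their expected overlap with $G_1$ is $|X||Y| p_1 p_2 = O(\eps^6) = o(1)$), so their colours are uniform and independent of the colouring of $G_1$, and in particular of $P$. Each such new edge avoids the $O(\eps^2 n)$ colours used on $P$ with probability $1 - O(\eps^2/\alpha)$, and since there are $\omega(1)$ of them, whp at least one has a fresh colour and closes $P$ into a rainbow cycle of length at least $|P|/2 = \Omega(\eps^2 n)$.

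The main obstacle is the first step: one must verify that the Krivelevich--Sudakov stack-growth estimate survives the additional colour-clash rejections. Stopping the RDFS once $|A|$ first reaches $\eps^2 n / K$ is critical, as this caps the rejection rate at $O(\eps^2)$ and so only perturbs the edge probability by a multiplicative factor $1 - o(\eps)$, which is harmless relative to the $\eps$ supercriticality. Pushing further (say, to a stack of linear order in $\eps n$) would require a more delicate analysis, in the spirit of the conjecture following \Cref{t:lineartreesparse}.
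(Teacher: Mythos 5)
Your proposal follows essentially the same route as the paper: split $p$ as $p_1=\frac{1+\eps}{n}$ plus a sprinkle $p_2=\Theta(\eps/n)$, obtain a rainbow path of length $\Omega\left(\eps^2 n\right)$ in $G_c(n,p_1)$ via the RDFS of \Cref{t:almostspanningpath}, and close it by sprinkling as in \Cref{t:almostspanningcycle}. For the path, the paper does not quote \cite{KS13} as a black box but simply reruns the Krivelevich--Sudakov counting with the per-query acceptance probability lowered to $(1-\delta)p_1$ where $\delta=\Theta\left(\eps^2\right)$; this is the same content as your capped-rejection stopping rule, and your observation that the colour-clash rejection is $O\left(\eps^2\right)$ and hence harmless against the $\eps$ of supercriticality is exactly the point. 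The one step that needs repair is your treatment of the $G_1$--$G_2$ overlap: you compute the expected number of $X$--$Y$ pairs lying in both graphs as $|X||Y|p_1p_2$, but $X$ and $Y$ are determined by $G_1$ (they are segments of the path found in $G_1$), so this expectation computation is not valid as written; moreover, for constant $\eps$ one has $|X||Y|p_1p_2=\Theta\left(\eps^5\right)$, a constant rather than $o(1)$. The conclusion survives with a short fix: either argue as in the proof of \Cref{t:almostspanningcycle} with a union bound over all pairs of disjoint sets of size $\Theta\left(\eps^2 n\right)$ to get a uniform bound on $e_{G_1}(X,Y)$, or simply note that whp $e(G_1)=O(n)$, so that conditionally on $G_1$ the overlap is stochastically dominated by $\mathrm{Bin}(O(n),p_2)$, of mean $O(\eps)$, which is negligible against the $\Theta\left(\eps^5 n\right)$ edges of $G_2$ between $X$ and $Y$; your remark that $\eps^5 n\to\infty$ for constant $\eps$ is indeed the crucial constraint here, as the paper itself notes in \Cref{s:discussion}.
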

\begin{proof}
We first show that whp $G_c\left(n,\frac{1+\eps}{n}\right)$ contains a rainbow path of length $\Omega(\eps^2 n)$.
To do this we consider, as in the proof of \Cref{t:almostspanningpath}, the RDFS process on $G_c(n,p_1)$, where $p_1 = \frac{1+\eps}{n}$.
Let us set $\delta := \frac{\eps^2 n}{5c} = \Theta(\eps^2)$.
Suppose that we run the process until $N = \frac{\eps}{2} n^2$ edges have been queried and that at no point in this process was $|A| \geq \frac{\eps^2}{5} n = \delta c$, where recall that $A$, the set of active vertices, forms a path.

We first claim that after $N$ queries there are at most $\frac{n}{3}$ vertices in $W$, the set of visited vertices. Indeed, if not, there is some point after $N' \leq N$ queries where $|W| = \frac{n}{3}$ and hence, since $|A| \leq \frac{n}{3}$, at this point the set $U$ of unvisited vertices satisfies $|U| \geq \frac{n}{3}$. However, in this case we must have already queried at least $|U||W| = \frac{n^2}{9} > N$ many edges, a contradiction.

Hence, we may assume that $|W| \leq \frac{n}{3}$, and so in particular, since $|A| \leq \frac{n}{3}$, we are still in the process of exploring the graph, and each query that was not rejected moved a vertex from $U$ to $A$. However, by assumption $|L_v| = |P_v| \leq \delta c$ for each $v \in W$. 
Hence, the probability that any query is accepted is at least $(1-\delta) p_1$, and so by the Chernoff bound (\Cref{lem:chernoff}) whp the number of accepted queries in the first $N$ is at least
\[
(1-\delta)p_1 N - \left((1-\delta)p_1 N\right)^{2/3} \geq \left(\frac{\eps}{2} + \frac{\eps^2}{2} - O\left(\eps^3\right)\right)n.
\]
Hence, at this point whp $|W| \geq |A \cup W| - |A| \geq \left(\frac{\eps}{2} + \frac{3\eps^2}{10}- O\left(\eps^3\right)\right)n$.
Furthermore, at this point $|U| \geq n - |W| - \frac{\eps^2 n}{5} $,
and $\left(n - |W| - \frac{\eps^2 n}{5} \right)|W|$ is minimised by taking $|W|$ as small as possible because $|W| \leq \frac{n}{3}$.
It follows that whp we have queried at least 
\begin{align*}
|U||W| & \ge \left(n - |W| - \frac{\eps^2 n}{5} \right)|W|
\geq  \left(1 - \frac{\eps}{2} - \frac{\eps^2}{2} - \frac{\eps^2}{5} +O\left(\eps^3\right)\right)
\left(\frac{\eps}{2} + \frac{3\eps^2}{10} - O\left(\eps^3\right)\right)n^2\\ 
&= \left( \frac{\eps}{2} + \eps^2 \left(\frac{3}{10} - \frac{1}{4} \right) + O\left(\eps^3\right) \right) n^2 \geq \left( \frac{\eps}{2} + \frac{\eps^2}{20} + O\left(\eps^3\right) \right) n^2 > N
\end{align*}
pairs, a contradiction. Hence, whp $G_c(n,p_1)$ contains a rainbow path $P$ of length $\delta n$.

Finally we can close this path to a rainbow cycle of almost the same length
by a sprinkling argument essentially identical to the one in the proof of \Cref{t:almostspanningcycle}. We omit the details.
\end{proof}

\section{Discussion}\label{s:discussion}
While in \Cref{thm:giantrainbowtree} we have determined the asymptotic order of the largest rainbow tree in $G_c(n,p)$ in the weakly supercritical regime, i.e.\ when $\eps \to 0$,
there is still a large gap between the upper and lower bounds in the supercritical regime, where $\eps >0$ is a small constant. When $\alpha = \frac{c}{n}$ is small compared to $\eps$, it is clear that the naive upper bound given by the order of the giant component cannot be asymptotically optimal, since there will be too few colours
to find a rainbow almost spanning subtree of the giant.
This is clear when $\alpha \ll \gamma(1+\eps)$, but in fact this will be an issue even for larger values of $\alpha$.

Indeed, it is known, see e.g.~\cite{FK16b}, that whp for this regime of $p$,
setting $1+\eps=d$ and $\gamma=\gamma(d)$,
the largest component has approximately $\gamma n$ vertices 
and $\left(1 - (1-\gamma)^2\right) \frac{dn}{2}$ edges.
However, since $\gamma$ satisfies the equation $1-\gamma = e^{-\gamma d}$, we see that the probability $q(\alpha)$ that any particular colour $i \in [c] = [\alpha n]$ is contained in the giant component is approximately
\begin{align*}
q(\alpha) &= 1- \left(1 - \frac{1}{c}\right)^{(1 - (1-\gamma)^2) \frac{dn}{2}}\\
&\approx 1 - \exp\left(-\frac{\gamma d}{\alpha} \left(1 - \frac{\gamma}{2}\right)\right)\\
&= 1 - (1-\gamma)^{\frac{1 - \frac{\gamma}{2}}{\alpha}}.
\end{align*}
In particular, if $\alpha \leq 1$, then, approximately, the expected number of colours appearing in the giant component is 
\[
\alpha q(\alpha)n \leq q(1)n = \left(1 - (1-\gamma)^{1 - \frac{\gamma}{2}}\right)n < (1-(1-\gamma)^{1})n = \gamma n, 
\]
and so we do not expect there to be sufficiently many colours appearing in the giant component to be able to cover almost all of the vertices with a rainbow tree.

However, even in the case where $\eps \ll 1 \ll \alpha$, it is not clear if the largest rainbow tree covers most of the giant component. 

\begin{question}
For fixed $\alpha  >0$ and $d >1$ what is the order of the largest rainbow tree in $G_{\alpha n}\left(n, \frac{d}{n}\right)$? 

In particular, for small $\alpha$ is this asymptotically the same as the number of different colours appearing in the giant component? Conversely, if $\alpha$ is large enough, is this asymptotically as large as the giant component?
\end{question}

In this regime, parts of the argument in \Cref{s:weakly} will fail to hold whp, even for $\eps$ arbitrarily small, as the bounds on some probabilities are only tending to $0$ as $\eps\to 0$. However, it may be the case that a careful analysis of the arguments in this regime will still show that the statement holds with positive probability. In this case, it is conceivable that the existence of a large rainbow tree could be deduced by showing that the order of the largest rainbow tree is well-concentrated about its mean.

\begin{question}
Let $\alpha,\eps > 0$ be constant, let $c = \alpha n$, let $p=\frac{1+\eps}{n}$ and let $T$ be the order of the largest rainbow tree in $G_c(n,p)$. Is it true that whp $T = (1+o(1))\mathbb{E}(T)$?
\end{question}

\Cref{t:longpath} raises the natural question of how large the longest rainbow path and cycle are in $G_c(n,p)$ in the weakly supercritical regime. In fact, a careful analysis of the proof of \Cref{t:longpath} will show that it holds for certain ranges of $\eps$ in the weakly supercritical regime, although the sprinkling argument seems to require that $\eps^5 n \to \infty$. Note that there is a corresponding upper bound of $O(\eps^2 n)$, given by known bounds on the length of the longest path and cycle in $G(n,p)$ in this regime, see \cite{L91}.

It is likely that arguments as in \Cref{s:weakly} can prove a corresponding lower bound all the way to the critical window.
Namely, we have seen that we can delete a set of 
$o\left(\eps^3n\right)$ edges from the $2$-core of $G(n,p)$
in order to produce a rainbow subgraph $H$.
Arguments as in \cite[Section 4]{L91} (see also \cite[Lemma 4.3]{DEK22}) 
should show that the the kernel $K(H)$ of $H$ contains an almost spanning 
subgraph which is distributed as a random $3$-regular graph, which is known to have good expansion properties whp (see, for example, \cite{B88}). In particular, standard results on graph expansion (see, for example, \cite{K19}) will then imply that whp $K(H)$ contains a cycle $C$ of length $\Theta(|K(H)|)$, and arguments as in \cite[Theorem 6]{L91} will show that whp the corresponding cycle in $H$, given by subdividing the edges in $K(H)$ appropriately, will have length $\Theta(|H|) = \Theta\left(\eps^2 n\right)$.

As well as the length of the longest cycle, there has also been much interest in the \emph{cycle spectrum} $\mathcal{L}(n,p)$ of $G(n,p)$, the set of lengths of cycles. Cooper and Frieze \cite{CF90} showed that above the Hamiltonicity threshold whp $G(n,p)$ is \emph{pancyclic} --
it contains cycles of all possible lengths, i.e. $\mathcal{L}(n,p) = [3,n]$. {\L}uczak \cite{L91a} showed that if $np \to \infty$ then whp $\mathcal{L}(n,p) \supseteq [3,n-(1+\eps)N_1]$ where $N_1$ is the number of isolated vertices in $G(n,p)$.
More recently Alon, and Krivelevich and Lubetzky \cite{AKL21}
showed that in the sparse regime, for any $\omega \to \infty$,
whp $\mathcal{L}(n,p) \supseteq [\omega,(1-\eps)L]$,
where $L$ is the length of the longest cycle, and this was improved to $\mathcal{L}(n,p) \supseteq [\omega,L]$ by Anastos \cite{A21}.
It would be interesting to know if similar statements hold for the \emph{rainbow cycle spectrum} in $G_c(n,p)$, for appropriate ranges of $p$ and $c$.

Finally, it would have been convenient in some of the proofs in \Cref{s:sparse} to assume that the size of various rainbow parameters are increasing with the number of colours used.
Indeed, it seems intuitively obvious that if $X(c)$ is a random variable which counts the number of a particular rainbow substructure in $G_c(n,p)$, then $X(c)$ should be at least as large as $X(c')$ for all $c' \leq c$, since using more colours should only make things `more rainbow'.
However, while when $c=2c'$ it is easy to demonstrate a coupling between $G_c(n,p)$ and $G_{c'}(n,p)$ which preserves the property that a subgraph is rainbow,
in general we were not able to find such a coupling. However, we believe that such a `monotonicity' statement should be true, and in particular we make the following conjecture.

\begin{conjecture}
Let $k,c,c' \in \mathbb{N}$ be such that $c' \leq c$ and let $G$ be a graph.
Define $T(c,k)$ to be the random variable which counts the number of rainbow trees of order $k$ when we uniformly colour the edges of $G$ with $c$, and define $T(c',k)$ similarly. Then $T(c,k)$ stochastically dominates $T(c',k)$.
\end{conjecture}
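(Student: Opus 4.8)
The plan is to produce, for every graph $G$ and all $c'\le c$, a coupling of a uniformly random $c'$-colouring $\chi'$ and a uniformly random $c$-colouring $\chi$ of $E(G)$ under which every edge set that is rainbow for $\chi'$ is also rainbow for $\chi$; any such coupling gives $T(c,k)\ge T(c',k)$ pointwise on the coupled space, hence the stochastic domination. Since stochastic domination is transitive it would even suffice to treat $c=c'+1$, though this does not obviously help. The natural strengthening to aim for is a coupling in which the partition $P_c$ of $E(G)$ into the colour classes of $\chi$ \emph{refines} the partition $P_{c'}$ into the colour classes of $\chi'$: refinement says exactly that whenever two edges receive different $\chi'$-colours they receive different $\chi$-colours, which is precisely rainbow-preservation.

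By Strassen's theorem such a coupling exists if and only if $P_c$ is stochastically dominated by $P_{c'}$ in the refinement order on the lattice of partitions of $E(G)$, i.e.\ $\mathbb{P}(P_c\in\mathcal U)\le\mathbb{P}(P_{c'}\in\mathcal U)$ for every up-set $\mathcal U$ (every family of partitions closed under coarsening). Here one uses the elementary identity $\mathbb{P}(P_c=\pi)=c(c-1)\cdots(c-|\pi|+1)/c^{|E(G)|}$, so that the law of $P_c$ depends on $\pi$ only through the number of blocks $|\pi|$, and the observation that the likelihood ratio
\[
\frac{\mathbb{P}(P_c=\pi)}{\mathbb{P}(P_{c'}=\pi)}=\left(\frac{c'}{c}\right)^{|E(G)|}\prod_{i=0}^{|\pi|-1}\frac{c-i}{c'-i}
\]
is non-decreasing in $|\pi|$; thus, relative to $P_{c'}$, the measure $P_c$ favours finer partitions, as intuition demands. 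Writing $f_{\mathcal U}(b)$ for the proportion of $b$-block partitions lying in $\mathcal U$, a summation-by-parts argument reduces the up-set inequality to the two statements (i) $f_{\mathcal U}(b)$ is non-increasing in $b$ for every up-set $\mathcal U$, and (ii) the number of distinct colours used, $|P_c|$, stochastically dominates $|P_{c'}|$ in the usual order on $\mathbb N$ --- an occupancy-type fact that should follow from a direct computation with $\mathbb{P}(|P_c|=b)=S(|E(G)|,b)\,c(c-1)\cdots(c-b+1)/c^{|E(G)|}$, $S$ a Stirling number of the second kind (and note that (ii) is itself the up-set inequality for up-sets of the form $\{\pi:|\pi|<t\}$).

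The step I expect to be the main obstacle is (i). The assertion that every up-set of the partition lattice has non-increasing normalised level sizes is precisely the \emph{normalised matching property} of $\Pi_{|E(G)|}$, and this is \emph{false} once $|E(G)|$ is large: it would imply the Sperner property of $\Pi_n$, which fails for large $n$ by Canfield's theorem. So the convenient sufficient condition is genuinely unavailable, and a proof must use the explicit weights $c(c-1)\cdots(c-b+1)/c^{|E(G)|}$ rather than merely their monotonicity in $b$ --- quantitatively balancing the ``tax'' an up-set pays for containing a very fine partition (it must then also contain all of that partition's many coarsenings) against the decay of these weights. I would therefore attempt either a direct estimate of $\sum_{\pi\in\mathcal U}\big(\mathbb{P}(P_c=\pi)-\mathbb{P}(P_{c'}=\pi)\big)$ exploiting this trade-off, or an edge-by-edge coupling that reveals $\chi(e_1),\chi(e_2),\dots$ and $\chi'(e_1),\chi'(e_2),\dots$ simultaneously; the obvious invariant for the latter --- that the numbers $j,j'$ of distinct colours seen so far in $\chi,\chi'$ satisfy $j/c\le j'/c'$, which is exactly what lets one couple the ``fresh colour'' probabilities $1-j/c$ and $1-j'/c'$ while keeping $\chi$ uniform --- is unfortunately not preserved by the coupling in general, so a subtler invariant or a two-stage construction would be required. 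Should the partition-domination itself fail, one would have to fall back on a coupling tailored specifically to $k$-vertex trees, which looks harder still.

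As fallbacks one notes that the case $c'\mid c$ is immediate from the block coupling that identifies the colours of $[c]$ in $c'$ equal-sized groups and sets $\chi'(e)$ to be the group of $\chi(e)$, and that for arbitrary $c',c$ one at least obtains $\mathbb{E}(T(c,k))\ge\mathbb{E}(T(c',k))$ directly from the fact that $\mathbb{P}(S\text{ is rainbow})=\prod_{i=1}^{k-2}(1-i/c)$ is increasing in $c$ --- so that only the passage from the means to the full stochastic order is at issue.
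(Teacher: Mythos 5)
This statement is posed as an open \emph{conjecture} in the paper, not a theorem with a proof, so there is nothing to compare your attempt against. The authors explicitly remark that they could demonstrate a rainbow-preserving coupling only when $c=2c'$ (your ``block coupling'' for $c'\mid c$ is the natural generalisation of their observation) and that they ``were not able to find such a coupling'' in general --- which is exactly where your attempt also stops.

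That said, your analysis of the obstruction is sound and somewhat sharper than what the paper records. You correctly identify that the right target is stochastic domination of the colour-class partitions $P_c \preceq P_{c'}$ in the refinement order (via Strassen), that the law of $P_c$ is an exchangeable function of the block count alone with likelihood ratio monotone in $|\pi|$, and that the tempting reduction to ``every up-set has non-increasing normalised level densities'' is the normalised matching property of $\Pi_m$, which fails for large $m$ (Canfield). You are also right that the naive greedy edge-by-edge coupling does not preserve the invariant $j/c \le j'/c'$. One small caution: even if partition-domination $P_c \preceq P_{c'}$ were established, that gives a monotone coupling simultaneously for \emph{all} rainbow substructure counts, which is stronger than the conjecture as stated (it asks only about rainbow trees of a fixed order $k$); conversely, a failure of partition-domination would not by itself refute the conjecture, since a coupling tailored to $k$-vertex trees could still exist. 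Your fallback observations --- that the mean $\expect{T(c,k)}$ is monotone in $c$ because $\prob{S\text{ is rainbow}} = \prod_{i=1}^{k-2}(1-i/c)$ is increasing, and that $c' \mid c$ is immediate --- are both correct and are the content the paper itself offers. In short: you have not proved the conjecture, but neither does the paper, and your write-up is a fair and accurate account of where the difficulty lies.
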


\bibliographystyle{plain}
\bibliography{rainbow}

\end{document}